\documentclass[11pt,reqno]{amsart}
\usepackage[a4paper,margin=26mm]{geometry}
\usepackage{amsmath,amssymb,mathtools,booktabs}
\usepackage{microtype}
\usepackage{enumitem}
\usepackage{tikz}
\usetikzlibrary{arrows.meta,calc,positioning,cd}
\usepackage{subcaption}
\usepackage[colorlinks=true,linkcolor=blue,citecolor=blue,urlcolor=blue]{hyperref}
\usepackage{cleveref}
\numberwithin{equation}{section}

\newtheorem{theorem}[equation]{Theorem}
\newtheorem{introtheorem}{Theorem}

\crefname{introtheorem}{Theorem}{Theorems}

\crefname{introcorollary}{Corollary}{Corollaries}
\newtheorem{corollary}[equation]{Corollary}
\newtheorem{lemma}[equation]{Lemma}
\newtheorem{proposition}[equation]{Proposition}

\theoremstyle{definition}
\newtheorem{definition}[equation]{Definition}
\newtheorem{example}[equation]{Example}
\newtheorem*{convention}{Convention}

\theoremstyle{remark}
\newtheorem{remark}[equation]{Remark}

\AddToHook{env/theorem/begin}{\crefalias{equation}{theorem}}
\AddToHook{env/lemma/begin}{\crefalias{equation}{lemma}}
\AddToHook{env/corollary/begin}{\crefalias{equation}{corollary}}
\AddToHook{env/proposition/begin}{\crefalias{equation}{proposition}}
\AddToHook{env/claim/begin}{\crefalias{equation}{claim}}
\AddToHook{env/definition/begin}{\crefalias{equation}{definition}}
\AddToHook{env/assumption/begin}{\crefalias{equation}{assumption}}
\AddToHook{env/convention/begin}{\crefalias{equation}{convention}}
\AddToHook{env/example/begin}{\crefalias{equation}{example}}
\AddToHook{env/remark/begin}{\crefalias{equation}{remark}}
\AddToHook{env/question/begin}{\crefalias{equation}{question}}

\crefname{theorem}{Theorem}{Theorems}
\crefname{lemma}{Lemma}{Lemmas}
\crefname{corollary}{Corollary}{Corollaries}
\crefname{proposition}{Proposition}{Propositions}
\crefname{claim}{Claim}{Claims}
\crefname{definition}{Definition}{Definitions}
\crefname{assumption}{Assumption}{Assumptions}
\crefname{convention}{Convention}{Conventions}
\crefname{example}{Example}{Examples}
\crefname{remark}{Remark}{Remarks}
\crefname{question}{Question}{Questions}

\newcommand{\univ}{\mathfrak p}
\newcommand{\cubmap}[1]{\mathsf{#1}}
\newcommand{\cubmapu}[1]{\widetilde{\mathsf{#1}}}
\newcommand{\im}[1]{\operatorname{Im}({#1})}
\newcommand{\FF}{\mathbb F}
\newcommand{\cI}{\mathcal{I}}
\newcommand{\cRI}{\mathcal{RI}}
\newcommand{\ZZ}{\mathbb Z}

\newcommand{\frq}{\mathfrak q}
\newcommand{\Lb}[1]{\operatorname{Label}(#1)}
\newcommand{\Core}[1]{\operatorname{Core}(#1)}
\DeclareMathOperator{\Stab}{Stab}
\DeclareMathOperator{\CAT}{CAT(0)}
\DeclareMathOperator{\diag}{diag}
\DeclareMathOperator{\dist}{\mathsf{d}}
\DeclareMathOperator{\distH}{\mathsf{d}_\mathrm{Haus}}
\DeclareMathOperator{\sd}{sd}
\definecolor{coregray}{RGB}{80,90,105}
\definecolor{petalblue}{RGB}{29,86,140}
\definecolor{petalred}{RGB}{175,63,64}
\definecolor{petalorange}{RGB}{194,117,32}
\definecolor{petalpurple}{RGB}{119,67,145}
\definecolor{maskblue}{RGB}{215,230,244}
\title[Product subcomplexes and intersection complexes]{Product subcomplexes and intersection complexes of weakly special square complexes}
\author{Sangrok Oh}
\address{Innovation Center for MathScience Research \& Education, Pusan National University, Busan, Korea}
\email{SangrokOh.math@gmail.com}

\begin{document}

\begin{abstract}
We define product subcomplexes and intersection complexes for compact nonpositively curved square complexes and \(\CAT\) square complexes. Product subcomplexes are defined as equivalence classes of local isometries from products of graphs with embedded coordinate fibers. Using their factorizations, we give a common definition of the intersection complex, extending the constructions in \cite{Oh22}. Cubical automorphisms induce actions on these complexes. We define morphisms by comparing compatible elevations of core-map labels and their factorizations along face chains. For a compact two-sided weakly special square complex, we prove that its intersection complex is canonically isomorphic to the deck quotient of the intersection complex of its universal cover. Under a simplicity hypothesis, this recovers the image-defined reduced intersection complex in \cite{Oh22}, and product-base groups are the full simplex stabilizers. We also prove that quasi-isometries between universal covers of compact two-sided weakly special square complexes induce isomorphisms of their intersection complexes in this sense, and give finite counterexamples explaining why images alone do not determine the quotient in general.
\end{abstract}

\subjclass[2020]{Primary 20F65; Secondary 20F67, 20F69}
\keywords{Weakly special square complexes, intersection complexes, quasi-isometry invariants}

\maketitle
\setcounter{tocdepth}{1}
\tableofcontents

\section{Introduction}

After special cube complexes were introduced by Haglund and Wise in \cite{HW08}, Huang introduced weakly special cube complexes in the study of the large-scale geometry of \(\CAT\) cube complexes \cite[Definitions~1.2 and~5.3]{Hua17}. He proved that quasi-isometries between the universal covers of compact weakly special cube complexes of the same dimension \(n\) preserve \(n\)-dimensional flats up to uniformly bounded Hausdorff distance \cite[Theorem~1.3]{Hua17}. For square complexes, this leads to the study of products of trees and their intersection patterns.

The \emph{intersection complex}, introduced in \cite[Definition~3.5]{Oh22}, records the coarse intersection pattern of product subcomplexes, which are `maximal' in a certain sense, in the universal cover of a compact weakly special square complex. 
Deck transformations act on this intersection complex, and the \emph{reduced intersection complex} of \cite[Definition~3.8]{Oh22} was intended to describe the quotient directly from subcomplexes downstairs that arise as images of local isometries from products of graphs.
However, a product image need not determine either a product presentation or a deck orbit of elevations. Thus the description by images loses information needed for this quotient.

In this paper, we define product subcomplexes using local isometries and construct intersection complexes from their factorizations. The definitions apply to compact nonpositively curved (NPC) square complexes and \(\CAT\) square complexes. For compact two-sided weakly special square complexes, we prove that these constructions have the intended quotient relation, recover the description of \cite{Oh22} under a simplicity hypothesis, and give counterexamples to the description by images in general.

Let \(X\) be a connected NPC square complex that is either compact or \(\CAT\). Consider local isometries \(\cubmap{f}:\Gamma_1\times\Gamma_2\to X\) from products of two nontrivial connected graphs, with embedded coordinate fibers. Containment between these maps is defined by factorization: one map is contained in another if it factors through the latter by a product embedding from the first domain into the second, allowing interchange of factors.
Two such maps are equivalent if each is contained in the other; equivalently, one factors through the other by a product isomorphism. We call an equivalence class a \emph{product subcomplex} of \(X\) and use representative maps when no confusion can arise.
A product subcomplex is \emph{standard} if both graph factors are leafless. A standard product subcomplex is called a \emph{maximal product subcomplex} if every such factorization through another standard product subcomplex is by a product isomorphism (\Cref{def:products,def:mapmaximal}).

The vertices of the \emph{intersection complex} \(\cI(X)\) are the maximal product subcomplexes of \(X\). Its simplices are represented by nondegenerate maximal common-factor diagrams, retaining the specified factorizations and allowing repeated vertex classes. The common map provides the \emph{core-map label}. Faces are obtained by restricting to subfamilies and taking their maximal extensions, with all face incidences and diagram symmetries retained (\Cref{sec:common-factor diagrams,def:mapreduced}).

If \(X\) is \(\CAT\), then this construction has a geometric description: \(\cI(X)\) is a simplicial complex, and distinct vertices span a simplex precisely when their images contain a common flat. The core-map label of a simplex \(\sigma\) is represented by \(\cubmap{m}_\sigma\), whose image is the union of all flats in this common intersection (\Cref{lem:geometriccores,def:geometricic}).
Passing to a face enlarges the core, so the core map of \(\sigma\) factors through that of each face. These factorizations are compatible along face chains.

We compare core-map labels by choosing compatible elevations along each face chain. A \emph{morphism} preserves the simplex, incidence and symmetry data and admits product quasi-isometries between the elevated domains under which the factorization diagrams commute up to bounded distance. An \emph{isomorphism} is a morphism with an invertible underlying combinatorial map (\Cref{def:icmorphisms}).

An action on \(X\) by cubical automorphisms induces an action on \(\cI(X)\) by postcomposition (\Cref{prop:intersectionaction}). In particular, deck transformations act on the intersection complex of the universal cover of a compact NPC square complex.

For the main results below, let \(Y\) be a compact connected two-sided weakly special square complex, and let \(\univ_Y:\widetilde Y\to Y\) be its universal covering map. The weakly special hypothesis gives a completion construction that relates product subcomplexes upstairs and downstairs (\Cref{lem:completion,prop:maporbits}). We write \(\cubmapu{m}_\sigma\) for the core map of a simplex \(\sigma\subset\cI(\widetilde Y)\), represented by an elevation as in \Cref{lem:geometriccores}.

Our main theorem identifies \(\cI(Y)\) with the deck quotient of \(\cI(\widetilde Y)\), retaining all face incidences and the permutations induced by simplex stabilizers. Here a \emph{triangular complex} means a regular cell complex whose cells are simplices, with no restriction on their dimension.

\begin{introtheorem}[The deck quotient, \Cref{thm:mapquotient}]\label{thm:introquotient}
The intersection complex \(\cI(Y)\) is finite, and there is a natural quotient morphism
\[\cubmap{q}:\cI(\widetilde Y)\longrightarrow\cI(Y)\cong\cI(\widetilde Y)/\pi_1(Y),\]
where the isomorphism is canonical. Core-map labels correspond by elevation, with their lifted factorizations identified by product isometries along face chains. The deck quotient of the barycentric subdivision of \(\cI(\widetilde Y)\) is a finite triangular complex whose realization is canonically homeomorphic to \(|\cI(Y)|\).
\end{introtheorem}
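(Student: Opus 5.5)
The plan is to build \(\cubmap{q}\) by postcomposition with \(\univ_Y\), and then show it is injective exactly on deck orbits and surjective. For a maximal product subcomplex \(\cubmapu{f}:\Gamma_1\times\Gamma_2\to\widetilde Y\), the composite \(\univ_Y\circ\cubmapu{f}\) is again a local isometry. Its coordinate fibers are only immersed, however, so I would pass to the completion of \Cref{lem:completion}. Quotienting the factors by the stabilizer of \(\cubmapu{f}\) produces a map \(\cubmap{f}:\bar\Gamma_1\times\bar\Gamma_2\to Y\) with embedded coordinate fibers, and \(\cubmapu{f}\) is an elevation of it. Two-sidedness ensures that the stabilizer preserves the factors, so the quotient is again a product. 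Weak specialness (no self-osculation or inter-osculation of hyperplanes) ensures that the fibers embed and that the stabilizer splits as a product of the factor stabilizers.

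The vertex-level statement should be \Cref{prop:maporbits}. It says that maximal product subcomplexes of \(Y\) correspond bijectively to \(\pi_1(Y)\)-orbits of maximal product subcomplexes of \(\widetilde Y\) via elevation, and that maximality is preserved in both directions. The reason maximality transfers is that factorizations lift and descend: any factorization downstairs lifts through the universal cover to a factorization of elevations. Finiteness of \(\cI(Y)\) then follows from compactness of \(Y\). Each maximal \(\cubmap{f}\) has a compact core, since leafless graphs that locally isometrically map with embedded fibers into a compact complex are finite up to bounded combinatorics. Hence there are only finitely many classes, and likewise only finitely many common-factor diagrams of bounded size.

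Next I would extend the correspondence to simplices. A nondegenerate maximal common-factor diagram in \(Y\) with core map \(\cubmap{m}\) can be lifted by choosing an elevation \(\cubmapu{m}\) of the core. Each factorization \(\cubmap{m}\to\cubmap{f}_i\) then lifts uniquely to elevations \(\cubmapu{f}_i\). Conversely, the \(\CAT\) description of \Cref{lem:geometriccores} shows that a simplex of \(\cI(\widetilde Y)\) has its core equal to the union of common flats, and this core is mapped equivariantly. The stabilizer of that core pushes it down to a diagram in \(Y\). Changing the choice of elevation changes the lift by a deck transformation, so downstairs simplices correspond to orbits of upstairs simplices. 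Faces are obtained by restricting to subfamilies and taking maximal extensions, and both operations commute with lifting. The same holds for symmetries: the permutations induced by \(\Stab(\sigma)\) are exactly the diagram symmetries of the quotient diagram.

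For the morphism data of \Cref{def:icmorphisms}, I would choose compatible elevations along each face chain \(\sigma\subset\tau\subset\cdots\). The lifted factorizations are then product isometries, namely deck translates, so the diagrams commute on the nose, and in particular up to bounded distance. This gives both \(\cubmap{q}\) and the canonical inverse on the quotient, and so yields \(\cI(Y)\cong\cI(\widetilde Y)/\pi_1(Y)\).

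For the last sentence, I would take the barycentric subdivision. There, vertices are simplices and edges are strict face incidences, so a cell of the subdivision is a face chain. Stabilizers of chains then act trivially on the chain's vertices, and the quotient is a regular triangular complex. Its cells are deck orbits of chains, which by the previous paragraph are face chains in \(\cI(Y)\) together with their incidence data. This is precisely the barycentric subdivision of \(\cI(Y)\) retaining repeated faces, and the homeomorphism of realizations follows.

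I expect the main obstacle to be the simplex-level lifting in the presence of repeated vertex classes and nontrivial symmetry. A simplex downstairs may have two vertices that come from the same downstairs class but from different elevations upstairs. One then has to check that the maximal-extension operation upstairs matches the one downstairs. I would handle this by working entirely with elevations of the core map, rather than with images, and using \Cref{prop:maporbits} for each factorization separately.
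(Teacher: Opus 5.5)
Your outline has the same architecture as the paper's proof: an orbit correspondence for simplices, compatible elevations along face chains compared via \Cref{lem:factorization}, and chain stabilizers fixing the vertices of simplices of \(\sd\cI(\widetilde Y)\). The gap is in how you push upstairs objects down to \(Y\). You obtain the downstairs map of an elevation, and later the downstairs diagram of a simplex (``the stabilizer of that core pushes it down''), by quotienting the elevated product by its stabilizer. You justify this by claiming that weak specialness makes the stabilizer split as the product of its factor stabilizers. That claim is false even for special \(Y\).

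In \Cref{ex:diagonaltorus}, the maximal product subcomplex of \(\widetilde Y=\mathbb R\times\mathbb R\) has stabilizer \(\langle(6,0),(3,3)\rangle\). The elements acting trivially on one factor generate only the index-two subgroup \(\langle(6,0),(0,6)\rangle\). The correct downstairs map is the double cover \(C_6\times C_6\to Y\). The quotient by the full stabilizer is \(Y\) itself, which is not a cubical product of graphs: it has three hyperplanes of length six in each direction. In \Cref{prop:stabilizer}, the stabilizer \(\pi_1(Y_0)\) is not even abstractly a product of two nontrivial free groups. The maximal product subcomplex there is \(\Theta\times\Theta\to Y_0\), whose product-base group has index four in the stabilizer.

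So the deck group of the relevant elevation is the product-base group, which can be a proper subgroup of the stabilizer. Your stabilizer quotients are in general not product subcomplexes at all, so the simplex correspondence as you describe it does not go through. A smaller correction: what prevents a stabilizer element from exchanging the factor directions is the absence of hyperplane self-intersections, not two-sidedness.

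The repair is to descend through coordinate fibers rather than stabilizers, as the paper does.
\begin{itemize}
\item For vertices, \Cref{lem:completion} and \Cref{prop:maporbits} take the downstairs factors to be the embedded projections of the coordinate fibers.
\item For a simplex, the second part of \Cref{lem:geometriccores} builds the downstairs core map by intersecting the downstairs coordinate-fiber graphs of the vertex maps and pruning hanging trees. Then \Cref{lem:factorization} descends the image inclusions to the arrows \(\cubmap{j}_i\).
\end{itemize}
This is packaged in \Cref{lem:commutingdiagrams}, which the paper's proof of the theorem simply cites. That lemma also shows that nondegeneracy is exactly distinctness of the upstairs vertices, and that diagram automorphisms are exactly stabilizer permutations. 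This resolves the repeated-vertex issue you flagged as the main obstacle. With this substitution, the rest of your outline matches the paper.

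Two smaller points. First, the product isometries in the face-chain comparison identify the lifted core domains of the two diagrams; the lifted factorization maps themselves are embeddings, not isometries. Second, finiteness of \(\cI(Y)\) needs nondegeneracy to bound the number of arrows in a diagram, not only compactness of the factors.
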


The quotient has embedded closed simplex cells without subdivision precisely when no simplex of \(\cI(\widetilde Y)\) contains two distinct vertices in the same deck orbit (\Cref{rem:triangularquotient}). This holds, in particular, under the simplicity hypothesis: following \cite[Section~3.2, before Remark~3.12]{Oh22}, we say that \(Y\) is \emph{simple} if every standard product subcomplex of \(Y\) is represented by an embedding. 

\begin{introtheorem}[Recovery under simplicity, \Cref{thm:simplecomparison,thm:geometricquotient}]\label{cor:introsimple}
If \(Y\) is simple, then standard product subcomplexes are determined by their images, and
\[\cI(Y)\cong\cRI_{\mathrm{img}}(Y),\]
where \(\cRI_{\mathrm{img}}(Y)\) is the image-defined reduced intersection complex of \cite[Definition~3.8]{Oh22}. The realization \(|\cubmap{q}|\) of the quotient morphism \(\cubmap{q}\), obtained in \Cref{thm:introquotient}, is injective on each closed simplex; in particular, \(\cI(Y)\) is a triangular complex without subdivision. For each simplex \(\sigma\subset\cI(\widetilde Y)\), we have
\[\Stab_{\pi_1(Y)}(\sigma)\cong\pi_1(\univ_Y(\im{\cubmapu{m}_\sigma})),\]
compatibly with the inclusions of factors along face chains.
\end{introtheorem}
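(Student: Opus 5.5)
I will split the statement into its three assertions and deduce each from \Cref{thm:introquotient} together with the completion construction (\Cref{lem:completion,prop:maporbits}).

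\textbf{Images determine standard product subcomplexes.} Assume \(Y\) is simple. Let \(\cubmap f\) and \(\cubmap f'\) be standard product subcomplexes with the same image. By simplicity both are represented by embeddings, so each is an isomorphism \(\Gamma_1\times\Gamma_2\to Z\) onto the common image \(Z\). Then \(\cubmap f'^{-1}\circ\cubmap f\) is a cubical isomorphism \(\Gamma_1\times\Gamma_2\to\Gamma_1'\times\Gamma_2'\) between products of leafless graphs. Such an isomorphism preserves the product structure: it sends the two parallelism classes of edges to parallelism classes, and in a product of leafless graphs the hyperplanes split into exactly two factor classes. So, allowing interchange of factors, it is a product isomorphism, and hence \(\cubmap f\) and \(\cubmap f'\) are equivalent.

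\textbf{Comparison with \(\cRI_{\mathrm{img}}(Y)\).} I will build a map vertex by vertex, sending each maximal product subcomplex to its image. By the previous step this is injective. It is surjective onto the maximal image-products of \cite{Oh22}, because the image of a maximal element is maximal among images; this uses that factorization of embeddings is the same as inclusion of images.

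For simplices, a nondegenerate maximal common-factor diagram over a family of vertices corresponds to its core. By simplicity the core is again an embedded standard product, namely the image \(\univ_Y(\im{\cubmapu m_\sigma})\). This matches the reduced-complex simplices, which \cite{Oh22} defines by common subproducts of images.

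\textbf{Injectivity of \(|\cubmap q|\) on closed simplices.} I will show that no simplex \(\sigma\) of \(\cI(\widetilde Y)\) contains two distinct vertices \(v\) and \(gv\) with \(g\) a deck transformation. Then \Cref{rem:triangularquotient} gives injectivity on closed simplices and the absence of subdivision.

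Suppose \(v\) and \(gv\) both lie in \(\sigma\). Their images \(\widetilde F\) and \(g\widetilde F\) then contain a common flat. Both project to the same embedded product \(P\subset Y\). The completion of the elevation is a connected component of \(\univ_Y^{-1}(P)\), and this component is a convex product subcomplex. Since \(\widetilde F\) and \(g\widetilde F\) share a flat, they lie in the same component. Maximality, together with \Cref{prop:maporbits}, then forces \(\widetilde F=g\widetilde F\), so \(v=gv\). This is the step I expect to be hardest. The key point is that an embedding downstairs has preimage components on which the deck group acts, with stabilizer exactly the image of \(\pi_1 P\). Here two-sidedness is needed so that the components are products and are not twisted by factor swaps.

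\textbf{The stabilizer formula.} Write \(C=\univ_Y(\im{\cubmapu m_\sigma})\), an embedded product subcomplex, hence locally convex and \(\pi_1\)-injective. The set \(\im{\cubmapu m_\sigma}\) is the union of the flats in the common intersection. It is a component of \(\univ_Y^{-1}(C)\) by the completion lemma, so its setwise stabilizer is a conjugate of \(\pi_1(C)\).

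Any \(g\) that stabilizes \(\sigma\) preserves this core. Conversely, any \(g\) preserving the core permutes the vertices of \(\sigma\). Each vertex of \(\sigma\) is the unique maximal extension containing the core in a given factor direction, so \(g\) fixes every vertex by the previous step, and therefore fixes \(\sigma\).

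Compatibility along face chains follows because the factorizations of cores along a face chain are inclusions of components. These induce the inclusions of the subgroups \(\pi_1\).
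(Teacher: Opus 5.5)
Your treatment of the first assertion, of the vertex bijection, and of injectivity on closed simplices follows the paper's route. For embeddings, factorization is image inclusion. Elevation images of an embedded product are whole components of its preimage, so two of them that meet coincide. Then \Cref{rem:triangularquotient} applies.

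\textbf{Main gap: the simplex-level isomorphism \(\cI(Y)\cong\cRI_{\mathrm{img}}(Y)\).} The image construction has one simplex for each \emph{component} \(Q\) of a common intersection \(\bigcap_i\im{\cubmap{m}_i}\) that contains a standard product image, with faces given by containing components over subfamilies. Your paraphrase ``common subproducts of images'' drops exactly this multiplicity, and ``this matches'' simply asserts the conclusion.

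To send maximal diagrams bijectively to such components, you must show that each \(Q\) contains a \emph{unique} largest standard product image \(K_Q\). Otherwise two nonisomorphic maximal diagrams over the same vertices could have their cores in one component. The paper obtains this by lifting:
\begin{enumerate}
\item If elevations \(\cubmapu{m}_i\) meet in \(\overline{P}\), which contains a flat, then every path in \(Q\) lifts inside every \(\im{\cubmapu{m}_i}\). Hence \(\overline{P}\) is a whole component of \(\univ_Y^{-1}(Q)\).
\item By \Cref{lem:geometriccores}, \(\overline{P}\) is the elevation image of a product subcomplex \(\Gamma_1\times\Gamma_2\to Y\) with image \(Q\).
\item This map factors through the embedding \(\cubmap{m}_0\) by \Cref{lem:factorization}, so it embeds. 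Thus \(Q\cong\Gamma_1\times\Gamma_2\) and \(K_Q=\Gamma_1^\circ\times\Gamma_2^\circ\), whose inclusion lifts to \(\cubmapu{m}_\sigma\).
\end{enumerate}
You also need the converse. Every such component lifts to an upstairs simplex, and any two lifts over the same \(Q\) are deck-related: align a core point and use uniqueness of the elevation image of each embedded maximal product through it. Finally, face incidences must be matched with containing components. None of this is in your proposal.

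\textbf{Secondary gap: the stabilizer justification.} You claim each vertex of \(\sigma\) is ``the unique maximal extension containing the core in a given factor direction.'' That would allow at most two vertices per simplex, yet higher-dimensional simplices occur, for instance for two-dimensional Salvetti complexes. Maximal product subcomplexes outside \(\sigma\) may also contain \(\im{\cubmapu{m}_\sigma}\). So preserving the core does not by itself make \(g\) permute the vertices of \(\sigma\).

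You do not need this claim. For a vertex \([\cubmapu{m}]\) of \(\sigma\), both \(\im{\cubmapu{m}}\) and \(g\,\im{\cubmapu{m}}\) contain \(\im{\cubmapu{m}_\sigma}\). Both are elevation images of the same embedded maximal product downstairs, so they are meeting components of its preimage and coincide. Hence \(g\) fixes every vertex, which is the paper's argument.

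Your remark that two-sidedness is needed for these components to be products is also unfounded. Components of the preimage of an embedded \(\Gamma_1\times\Gamma_2\) are images of \(\widetilde{\Gamma_1}\times\widetilde{\Gamma_2}\) in any case.
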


A criterion based on unique hyperplane crossings gives examples of simple complexes (\Cref{prop:uniquecrossings}). These include two-dimensional Salvetti complexes, discrete two-particle configuration spaces, and commutator complexes of right-angled Coxeter groups defined by triangle-free graphs (\Cref{cor:simplegraphbraids,prop:simplecommutator}).

The construction also preserves the quasi-isometry correspondence of \cite[Theorem~3.7]{Oh22}, now as an isomorphism in the sense above.

\begin{introtheorem}[Quasi-isometry correspondence, \Cref{thm:geometricqi,thm:geometricisomorphism}]\label{thm:introqi}
Let \(Y_1,Y_2\) be compact connected two-sided weakly special square complexes. For every \((\lambda,\varepsilon)\)-quasi-isometry \(\phi:\widetilde Y_1\to\widetilde Y_2\), there exist an isomorphism of intersection complexes
\[\Phi:\cI(\widetilde Y_1)\longrightarrow\cI(\widetilde Y_2)\]
and a constant \(D=D(\lambda,\varepsilon,Y_1,Y_2)\ge 0\) such that \(\phi(\im{\cubmapu{m}_\sigma})\) lies at Hausdorff distance at most \(D\) from \(\im{\cubmapu{m}_{\Phi(\sigma)}}\) for every simplex \(\sigma\subset\cI(\widetilde Y_1)\).
In particular, along each finite face chain, the two factors can be matched consistently so that their quasi-isometry types and the equalities between consecutive factors are preserved.
\end{introtheorem}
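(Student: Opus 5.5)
We will build \(\Phi\) in three stages: on vertices, then on simplices, and finally check that \(\Phi\) is compatible with the factorization data. By \Cref{lem:geometriccores} and the \(\CAT\) description of \(\cI\), a vertex of \(\cI(\widetilde Y_i)\) is a maximal product subcomplex \(\widetilde\Gamma_1\times\widetilde\Gamma_2\to\widetilde Y_i\) with leafless factors, and a simplex is a family of such vertices whose images contain a common flat. The core label of the simplex is the union of these common flats.

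\emph{Step 1: vertices.} We use Huang's theorem \cite[Theorem~1.3]{Hua17}: \(\phi\) sends every \(2\)-flat of \(\widetilde Y_1\) to within uniform distance \(D_0(\lambda,\varepsilon,Y_1,Y_2)\) of a \(2\)-flat of \(\widetilde Y_2\). A maximal product subcomplex is the union of the flats it contains. Since the factors are leafless, in a compact weakly special complex these are unions of flats coming from bi-infinite geodesics in each tree factor. Using Huang's argument, or \cite[Theorem~3.7]{Oh22} as adapted there, we will show that the image of a maximal product subcomplex \(P\) under \(\phi\) lies at bounded Hausdorff distance from a unique maximal product subcomplex \(P'\) of \(\widetilde Y_2\).

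For uniqueness we will use that distinct maximal product subcomplexes are at infinite Hausdorff distance. This follows from maximality together with the completion construction of \Cref{lem:completion}: if two maximal products were at finite Hausdorff distance, the product they generate would be standard, and each would factor through it. The bound on the Hausdorff distance \(d_{\mathrm{Haus}}(\phi(P),P')\) must be uniform. To get uniformity we will use cocompactness: there are finitely many deck orbits of maximal products, which is the finiteness part of \Cref{thm:introquotient}, and the quasi-isometry constants are fixed. Applying the same construction to a quasi-inverse \(\bar\phi\) shows that \(\Phi\) is a bijection on vertices.

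\emph{Step 2: simplices and cores.} Distinct vertices span a simplex exactly when their images share a flat. Through \(\phi\), a common flat goes to within \(D_0\) of a flat \(F'\). Each \(P'_j\) then contains a flat at finite Hausdorff distance from \(F'\). Because all these flats lie within one standard product, we can replace them by a single flat lying in all \(P'_j\); this is the step that needs the intersections of leafless products to be products again, as in the core construction. Hence \(\Phi\) preserves simplices, and by symmetry it preserves non-simplices. Faces and repeated-vertex or symmetry data are matched because \(\Phi\) is induced from a map on vertices, since \(\cI(\widetilde Y_i)\) is simplicial.

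For the cores, the image \(\im{\cubmapu{m}_\sigma}\) is the union of flats common to the \(P_j\). The bound \(d_{\mathrm{Haus}}(\phi(\im{\cubmapu{m}_\sigma}),\im{\cubmapu{m}_{\Phi(\sigma)}})\le D\) will follow from a coarse-intersection statement: the coarse intersection of the \(\phi(P_j)\) is within bounded distance of the intersection of the \(P'_j\). We expect this to be the main obstacle. The constant \(D\) must be independent of \(\sigma\), so the coarse intersection of product subcomplexes in a \(\CAT\) square complex must be controlled uniformly. We will try first to reduce to finitely many configurations via deck cocompactness: there are finitely many orbits of simplices. Then we will use the product structure: a coarse intersection of two products of trees \(T_1\times T_2\) and \(T_1'\times T_2'\) is a product of subtrees, up to a constant depending only on \(Y_i\).

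\emph{Step 3: morphism data.} The core \(\cubmapu{m}_\sigma\) is a product \(\Delta_1\times\Delta_2\), and \(\phi\) restricted to it is at bounded distance from its core counterpart. By the quasi-isometric rigidity of products of leafless trees (Kapovich–Kleiner–Leeb type splitting), \(\phi\) restricted to \(\Delta_1\times\Delta_2\) is at bounded distance from a product quasi-isometry, possibly after swapping the factors. Along a face chain \(\sigma\supset\tau\), the factor inclusions \(\Delta_i(\sigma)\hookrightarrow\Delta_i(\tau)\) must commute up to bounded distance with these product quasi-isometries. This holds because both are restrictions of the same \(\phi\), so the factor swap chosen for \(\tau\) determines the one for \(\sigma\). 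This gives the consistent matching of factors claimed in the last sentence of the statement. It also shows that \(\Phi\) is a morphism in the sense of \Cref{def:icmorphisms}, and by the quasi-inverse it is an isomorphism.
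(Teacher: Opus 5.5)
\textbf{Gap in Step~1.} You never construct \(P'\). The fact that \(\phi(P)\) is coarsely contained in a standard product of \(\widetilde Y_2\) is the heart of the theorem, and it cannot be deferred to \cite[Theorem~3.7]{Oh22}. The tripod step there is invalid, because the three flats of a tripod times a line meet only pairwise, in half-planes (\Cref{sec:oh22status}, item~4).

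The missing idea is that of \Cref{lem:tripod,lem:general-matching}. Flats whose neighborhoods share a half-plane actually intersect (\Cref{lem:general-coarse-intersection}(2)). Cubical Helly then gives a triple intersection of the three target flats. This intersection is a strip at finite Hausdorff distance from \(\phi\) of the central line, so it contains a singular line \(\gamma'\).
\begin{itemize}
\item In the bushy--bushy case, the hyperplanes dual to the detected horizontal and vertical lines form two families that cross pairwise. Hence the convex hull of the connected union of target flats splits as a product of two leafless trees.
\item In the line--bushy case, one takes the leafless part of the parallel set of \(\gamma'\).
\end{itemize}
Only then does one enlarge to a maximal product, return with a quasi-inverse, and conclude with \Cref{lem:coarse} and maximality.

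\Cref{lem:coarse} is also the right tool elsewhere. It gives your uniqueness claim; completion plays no role there. It also handles Step~2: \(F'\) is coarsely contained in each \(P_j'\), hence lies in each \(P_j'\), so no replacement of flats is needed.

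\textbf{Uniformity.} Your cocompactness argument fails because \(\phi\) is not deck-equivariant. Finitely many orbits of maximal products or simplices give no control of \(\distH(\phi(gP),(gP)')\) as \(g\) varies. Uniformity comes instead from Huang's uniform flat constant \(c\), since product images and cores are unions of flats. In particular, the core bound you call the main obstacle is immediate. We have \(F\subset\bigcap_jP_j\) if and only if \(F'\subset\bigcap_jP_j'\), and \(F\mapsto F'\) is a bijection of flats. Hence \(\distH(\phi(\im{\cubmapu{m}_\sigma}),\im{\cubmapu{m}_{\Phi(\sigma)}})\le c\), with no coarse-intersection estimate required.

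\textbf{Gap in Step~3.} Step~3 rests on a false claim. Kapovich--Kleiner--Leeb \cite[Theorem~B]{KKL98} controls a quasi-isometry only along the bushy factors. So the restriction of \(\phi\) to a core with a line factor need not be close to any product quasi-isometry. For instance, \((t,x)\mapsto(t+\dist(x,x_0),x)\) is a bi-Lipschitz self-map of \(\mathbb R\times T\) at infinite distance from every product map, and a rotation of a flat is another example. The core of a simplex can have a line factor while the cores of its faces are bushy, so applying KKL core by core does not work.

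The definition of morphism needs only some compatible product quasi-isometries, not ones close to \(\phi\). \Cref{prop:factorchaindata} builds them from factor maps on the \emph{largest} core of the chain. It uses KKL for the bushy factors and arbitrary isometries for the line factors, with factor-wise Hausdorff bounds obtained by projecting the bounds for each core. The maps \(\rho_i\) are then restrictions followed by nearest-point projections, which makes the compatibility automatic.

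Finally, you do not prove that equalities between consecutive factors are preserved. This needs the fact that two leafless subtrees of a tree at finite Hausdorff distance coincide.
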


The counterexamples in \Cref{sec:counterexamples} show why retaining maps is necessary. Two maximal product subcomplexes can have the same image but nonisomorphic product-base groups and elevations in different deck orbits (\Cref{prop:bases}). The image of a maximal product subcomplex upstairs can project to an image properly contained in the image of another standard product subcomplex (\Cref{prop:vertexfailure}), and \(\cI(Y)\) can have loop edges (\Cref{prop:loop}). Product-base groups can also be proper subgroups of full stabilizers, even when \(Y\) is special (\Cref{prop:stabilizer}).

Related uses of product geometry include Shepherd's embedding and stabilizer results for projections and orthogonal complements in weakly special cube complexes without loop edges \cite[Propositions~3.11 and~3.14]{She26}. Abbott and Mart{\'i}nez-Pedroza's coset intersection complex records infinite intersections of conjugate subgroups and carries a natural group action \cite[Definition~1.1]{AMP26}; its homotopy and quasi-isometry types are preserved by quasi-isometries of group pairs \cite[Theorem~1.4]{AMP26}.

\begin{remark}\label{rem:introstatus}
\Cref{sec:oh22status} lists the resulting corrections to the definitions and proofs in \cite{Oh22}. The application-specific hypotheses and corrections of \cite{Oh23} remain in force.
\end{remark}

Unless stated otherwise, numbered citations to \cite{Oh22} refer to the published 2022 article. The arXiv version incorporates \cite{Oh23}; where the numbering differs, we also specify the arXiv numbering.

\subsection*{Tool use}
The author used generative AI tools to assist with English-language editing and figure preparation. The author takes full intellectual responsibility for the content of this paper.

\section{Product subcomplexes}\label{sec:products}
\subsection{Terminology and assumptions}\label{sec:Terminology}
Graph factors of product subcomplexes are assumed to be connected and to contain an edge. Loops and multiple edges are allowed unless stated otherwise. Cube complexes are assumed to be connected and nonpositively curved (NPC). We also use \emph{triangular complexes}, namely regular cell complexes whose cells are simplices, as in \cite[Section~1.7.2]{DK18}. Cubes and simplices carry their standard Euclidean metrics.
A graph is called \emph{leafless} if it has no vertices of valency one.

For a connected complex \(X\), write \(\univ_X:\widetilde X\to X\) for its universal covering map. The induced length metric on \(X\) is denoted by \(\dist_X\); in particular, cube complexes carry their standard \(\ell_2\)-metrics. For subsets \(Q_1,Q_2\subseteq X\), write \(\dist_X(Q_1,Q_2)\) for their distance, \(\distH(Q_1,Q_2)\) for their Hausdorff distance, and \(\mathcal N_r(Q_i)\) for the closed \(r\)-neighborhood of \(Q_i\).

A map between cube complexes is \emph{cubical} if it maps each cube isometrically onto a cube of the same dimension. A cubical map is an \emph{immersion} if its induced vertex-link maps are injective. By a \emph{local isometry} between cube complexes we mean a cubical map that is \emph{locally an isometric embedding} for the \(\ell_2\)-metrics. Equivalently, it is a cubical immersion whose vertex-link images are full \cite[Definition~2.9]{HW08}. For a local isometry \(\cubmap{f}:X_1\to X_2\) between finite-dimensional cube complexes, an \emph{elevation} of \(\cubmap{f}\) to \(\widetilde{X_2}\) is a lift \(\cubmapu{f}:\widetilde{X_1}\to\widetilde{X_2}\) of \(\cubmap{f}\circ\univ_{X_1}\) through \(\univ_{X_2}\). The map \(\cubmap{f}\) is \(\pi_1\)-injective, and every elevation is an isometric embedding with convex image \cite[Proposition~II.4.14]{BH99}.

\begin{lemma}\label{lem:productrestriction}
A cubical immersion from a product of two graphs into an NPC square complex is a local isometry. If the target is a product of graphs, then the map splits factor-wise after possibly interchanging the target factors.
\end{lemma}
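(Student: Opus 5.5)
Both assertions are local statements about vertex links, and I will verify them link by link. Let \(\cubmap{f}:\Gamma_1\times\Gamma_2\to X\) be a cubical immersion and fix a vertex \(v=(v_1,v_2)\). The link of \(v\) in \(\Gamma_1\times\Gamma_2\) is the join of two discrete sets, namely the half-edges \(E_1\) at \(v_1\) and \(E_2\) at \(v_2\). As a graph it is the complete bipartite graph \(K(E_1,E_2)\). By \cite[Definition~2.9]{HW08} it suffices to show that the injective link map \(\mathrm{lk}(v)\to\mathrm{lk}(\cubmap{f}(v))\) has full image. Fullness means that whenever two link vertices in the image are joined by an edge of \(\mathrm{lk}(\cubmap{f}(v))\), that edge already lies in the image.

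Take two distinct link vertices \(a,b\) of \(\mathrm{lk}(v)\).
\begin{enumerate}[label=(\roman*)]
\item If \(a\in E_1\) and \(b\in E_2\), then \(a\) and \(b\) are already adjacent in \(\mathrm{lk}(v)\). Since \(\mathrm{lk}(\cubmap{f}(v))\) is a simplicial graph by the NPC (flag, no bigons) condition, there is at most one edge between their images. Hence the image edge is the unique one, and nothing more needs checking.
\item If \(a,b\in E_1\) (or both lie in \(E_2\)), we must exclude an edge between \(\cubmap{f}(a)\) and \(\cubmap{f}(b)\). Suppose such an edge exists. Choose any \(c\in E_2\), which is possible since \(\Gamma_2\) contains an edge. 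Then \(\cubmap{f}(a),\cubmap{f}(b),\cubmap{f}(c)\) are pairwise adjacent in \(\mathrm{lk}(\cubmap{f}(v))\), using injectivity together with the edges \(ac\) and \(bc\). The flag condition would then force a \(3\)-cube in \(X\), contradicting that \(X\) is a square complex.
\end{enumerate}
So the image is full, and \(\cubmap{f}\) is a local isometry. This \(3\)-cycle step, which uses the dimension bound through flagness, is the only point with real content.

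For the second assertion, let the target be \(\Delta_1\times\Delta_2\). Every edge of the target is horizontal or vertical, and every square has two horizontal and two vertical sides, with opposite sides of the same type. Call an edge of \(\Gamma_1\times\Gamma_2\) \emph{type-preserving} if a \(\Gamma_1\)-edge goes to a \(\Delta_1\)-edge, or a \(\Gamma_2\)-edge goes to a \(\Delta_2\)-edge. Each square of the domain maps isometrically onto a square. Hence adjacent sides of a domain square, which have different factor types, map to adjacent sides of the target square, which also have different types. So within a square, either all sides are type-preserving or all are type-reversing.

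To make this global, consider a vertex \(v\) where \(\Gamma_1\)-edges and \(\Gamma_2\)-edges meet. Any \(\Gamma_1\)-edge and any \(\Gamma_2\)-edge at \(v\) span a square, so all edges at \(v\) share one behavior. The domain is connected and every edge lies in a square, because both factors have edges. Propagating along edges therefore shows that the behavior is constant. After interchanging \(\Delta_1,\Delta_2\), we may assume \(\cubmap{f}\) preserves types.

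It remains to show that \(\cubmap{f}\) is a product map. Write \(\cubmap{f}=(f_1,f_2)\) for the coordinates \(\mathrm{pr}_i\circ\cubmap{f}\). Along a \(\Gamma_2\)-edge the first coordinate of the image is constant, since the image edge is vertical. Connectedness of \(\Gamma_2\) then gives that \(f_1(x,y)\) is independent of \(y\) on vertices. On edges, squares transport \(\Gamma_1\)-edges along \(\Gamma_2\)-edges to parallel edges with the same \(\Delta_1\)-projection, so \(f_1\) is independent of \(y\) there as well. Thus \(f_1=g_1\circ\mathrm{pr}_1\) and symmetrically \(f_2=g_2\circ\mathrm{pr}_2\), where each \(g_i:\Gamma_i\to\Delta_i\) is cubical. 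Therefore \(\cubmap{f}=g_1\times g_2\).
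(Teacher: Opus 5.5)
Your proof is correct and follows essentially the same route as the paper: the source link is complete bipartite, an extra edge between two vertices in the same part would create a triangle in the target link and contradict NPC, and in the product case the factor-direction assignment is fixed within each square, propagated by connectedness, and then each target coordinate is independent of the other source coordinate. You spell out details the paper leaves implicit, namely why cross edges raise no fullness issue (the target link is simplicial) and how the constant behaviour yields the factorization \(\cubmap{f}=g_1\times g_2\).
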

\begin{proof}
At each vertex, the source link is complete bipartite. Any additional edge between vertices of its image would join two vertices in the same part and form a triangle in the target link, contradicting NPC. Thus the link image is full. If the target is a product, the two source factor directions map to distinct target factor directions. Squares and connectedness propagate this assignment, and varying one source coordinate leaves the other target coordinate unchanged.
\end{proof}

A cube complex is \emph{weakly special} if its hyperplanes do not self-intersect or self-osculate. We follow the convention in \cite[Definition~5.3]{Hua17}: distinct unoriented edges incident to a common vertex must be dual to distinct hyperplanes. In particular, the two germs of a single loop edge are not counted as a self-osculation. A cube complex is \emph{two-sided} if every hyperplane is two-sided. 

We use \emph{special} in the two-sided sense, characterized by the existence of a local isometry into a Salvetti complex \cite[Definition~3.2 and Theorem~4.2]{HW08}. For a two-sided weakly special complex, specialness is equivalent to the absence of inter-osculation.

\begin{lemma}[Subcomplexes, covers, and subdivision]\label{lem:weaklyspecialstability}
Let \(Z\) be a two-sided weakly special square complex. Then every connected cubical subcomplex of \(Z\) is two-sided weakly special. For every cubical covering \(\widehat Z\to Z\), the first cubical subdivision of \(\widehat Z\), obtained by bisecting every edge and dividing every square into four squares, is two-sided weakly special. If \(Z\) has no loop edges, then \(\widehat Z\) itself is two-sided weakly special.
\end{lemma}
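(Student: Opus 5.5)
The plan is to verify the three properties (no self-intersection, no self-osculation, two-sidedness) directly from how hyperplanes restrict, pull back, and subdivide. Throughout I encode two-sidedness as a \emph{coherent orientation}: a choice of orientation on every edge dual to a hyperplane \(H\) such that opposite edges of each square crossed by \(H\) are oriented in parallel. A hyperplane is two-sided exactly when its dual edges admit such a choice.

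\emph{Subcomplexes.} Let \(Z'\subseteq Z\) be a connected cubical subcomplex. Every hyperplane \(H'\) of \(Z'\) is a connected component of \(H\cap Z'\) for a unique hyperplane \(H\) of \(Z\). Hence any two edges of \(Z'\) dual to \(H'\) are dual to \(H\) in \(Z\).
\begin{itemize}
\item A square of \(Z'\) where \(H'\) crosses itself is a square of \(Z\) where \(H\) crosses itself.
\item Two distinct unoriented edges of \(Z'\) at a common vertex, both dual to \(H'\), are such edges for \(H\) in \(Z\).
\end{itemize}
So self-intersection or self-osculation in \(Z'\) would give the same in \(Z\). Restricting a coherent orientation for \(H\) to the edges of \(H'\) gives one for \(H'\).

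\emph{Covers.} Let \(\rho:\widehat Z\to Z\) be a cubical covering. Each hyperplane \(\widehat H\) of \(\widehat Z\) maps into a hyperplane \(H\) of \(Z\), since \(\rho\) preserves the parallelism of edges across squares. Pulling back a coherent orientation of \(H\) along \(\rho\) gives one for \(\widehat H\), so \(\widehat Z\) is two-sided.

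If \(\widehat H\) crosses itself in a square \(\widehat s\), then \(\rho(\widehat s)\) is a square in which \(H\) crosses itself. Now suppose two distinct unoriented edges \(\widehat e\neq\widehat e'\) at a vertex \(\widehat v\) are dual to \(\widehat H\). Their images are edges at \(v=\rho(\widehat v)\) dual to \(H\).
\begin{itemize}
\item If \(\rho(\widehat e)\neq\rho(\widehat e')\), this is a self-osculation of \(H\) in \(Z\), which is excluded.
\item Since \(\rho\) is a local homeomorphism, \(\rho(\widehat e)=\rho(\widehat e')\) can only happen when this common image is a loop edge and \(\widehat e,\widehat e'\) lift its two germs at \(v\).
\end{itemize}
This already proves the last sentence of the lemma. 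It also shows that the only possible defect of \(\widehat Z\) is a self-osculation coming from lifts of the two germs of a loop edge.

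\emph{Subdivision.} This is the main step. I would first describe the hyperplanes of the first cubical subdivision \(\sd\widehat Z\).
\begin{itemize}
\item Its edges are either half-edges of \(\widehat Z\), or half-segments of midcubes, each joining an edge midpoint to a square centre.
\item A hyperplane of \(\sd\widehat Z\) dual to a half-edge of an edge \(e\) sits at the quarter point of \(e\).
\item It continues through squares parallel to the midcube of the hyperplane \(\widehat H\) dual to \(e\). The midcube half-segments parallel to \(e\) are dual to it as well.
\end{itemize}
Thus each hyperplane of \(\sd\widehat Z\) is a push-off \(\widehat H^{-}\) or \(\widehat H^{+}\) of a hyperplane \(\widehat H\) of \(\widehat Z\), lying on one of its two sides. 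Two-sidedness of \(\widehat H\), from the previous step, is exactly what makes \(\widehat H^{\pm}\) well defined: the coherent orientation consistently labels the half of each dual edge at its initial endpoint as \(-\) and at its terminal endpoint as \(+\). Each push-off is itself two-sided, with its dual edges oriented parallel to those of \(\widehat H\).

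\(\sd\widehat Z\) has no loop edges, since every edge joins vertices of different types (original vertex, edge midpoint, square centre). A self-intersection of \(\widehat H^{\pm}\) occurs in a subsquare of some square of \(\widehat Z\), and would force \(\widehat H\) to cross itself there. For self-osculation I check the three vertex types.
\begin{itemize}
\item \emph{Square centres and edge midpoints.} Here the incident edges are dual to push-offs of at most two hyperplanes, which are distinct or on different sides, so this is a direct local check.
\item \emph{Original vertices \(\widehat v\).} Two incident half-edges dual to the same \(\widehat H^{+}\) come from edges \(e\neq e'\) at \(\widehat v\) dual to \(\widehat H\), both terminating at \(\widehat v\). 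If \(e\neq e'\) as unoriented edges, the covers step shows they project to distinct edges of \(Z\) at \(v\), a self-osculation in \(Z\), which is excluded. If instead they are the two germs of a loop edge, then one germ is initial and the other terminal. So the two half-edges lie on opposite sides, dual to \(\widehat H^{-}\) and \(\widehat H^{+}\) respectively, and there is no self-osculation.
\end{itemize}
This loop case is precisely where the subdivision is needed, and I expect the careful bookkeeping of the sides \(\widehat H^{\pm}\) to be the main point to write out.
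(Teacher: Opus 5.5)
Your hands-on route verifies directly what the paper takes from \cite[Lemma~3.7]{HW08} and \cite[Remark~3.12]{HW08}. It works for subcomplexes, for coverings, and at square centres and original vertices of \(\sd\widehat Z\).

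The gap is at edge midpoints, which you dismiss as a direct local check involving at most two hyperplanes. At the midpoint \(m\) of an edge \(e\) dual to \(\widehat H\), the edges of \(\sd\widehat Z\) are the two halves of \(e\), dual to \(\widehat H^{-}\) and \(\widehat H^{+}\). There is also one midcube half-segment for each occurrence of \(e\) as a side of a square \(s\). By your own description of push-offs, the half-segment into \(s\) is parallel to the sides of \(s\) adjacent to \(e\). So it is dual to a push-off of the hyperplane \(\widehat K_s\) crossing \(\widehat H\) in \(s\). When \(e\) lies in several squares, several hyperplanes occur, and nothing visible at \(m\) makes their push-offs distinct. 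For instance, take two squares glued along their whole boundary, a complex that is not nonpositively curved. No hyperplane there self-intersects or self-osculates, yet the two half-segments at the midpoint of a common edge are dual to the same push-off. So this case genuinely needs the hypotheses at the endpoints of \(e\), including nonpositive curvature.

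The repair uses ingredients you already have. At the corner of \(s\) containing the initial germ of \(e\), let \(f_s\) be the other edge germ. The sign of the push-off dual to the half-segment into \(s\) is then the initial or terminal type of \(f_s\). Suppose two half-segments at \(m\) are dual to the same push-off. Then \(f_s\) and \(f_{s'}\) are germs at one vertex, dual to the same hyperplane and of the same type, so they coincide by your original-vertex argument. The vertex links of \(\widehat Z\) are simplicial by nonpositive curvature, so the two corners coincide, and hence so do the half-segments. The push-offs \(\widehat H^{\pm}\) dual to the halves of \(e\) differ from all of these, because \(\widehat H\) does not cross itself in \(s\).

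Finally, weak specialness here includes nonpositive curvature, and you never check that the three operations preserve it. The paper's proof begins with exactly this. Vertex links of subcomplexes are subgraphs, and coverings preserve links. Subdivision keeps the old links and creates complete bipartite links at edge midpoints and square centres.
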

\begin{proof}
The vertex-links of a cubical subcomplex are subgraphs of the original links, and coverings preserve vertex-links. Cubical subdivision leaves the old vertex-links unchanged and gives complete bipartite links at the new vertices. Thus all three operations preserve NPC.

Subcomplexes preserve weak specialness, as do coverings when the base has no loop edges: each hyperplane maps into a hyperplane, and distinct incident edges have distinct images. Two-sidedness is inherited by restricting or lifting transverse orientations.

For an arbitrary covering, the hyperplanes of \(\widehat Z\) are two-sided and have neither self-intersections nor direct self-osculations, by \cite[Lemma~3.7]{HW08}. Any self-osculation is therefore indirect. The first cubical subdivision preserves two-sidedness and the absence of self-intersections and direct self-osculations, and eliminates indirect self-osculations, as in \cite[Remark~3.12]{HW08}. It is therefore two-sided weakly special.
\end{proof}

For instance, the unit-square torus \(\mathbb R^2/\ZZ^2\) is two-sided weakly special, but its two-sheeted cubical cover \(\mathbb R^2/\langle(2,0),(1,1)\rangle\rightarrow\mathbb R^2/\ZZ^2\) has an indirect self-osculation involving two distinct horizontal edges. Nevertheless, every compact weakly special cube complex admits a finite-sheeted two-sided weakly special cover \cite[paragraph following Definition~5.3]{Hua17}.

\begin{convention}
Unless stated otherwise, \(Y\) and its indexed variants denote compact, connected, two-sided, weakly special square complexes. A \emph{flat} is a cubical isometric copy of \(\mathbb R^2\), and a \emph{singular line} is a cubical isometric copy of \(\mathbb R\).
\end{convention}

\subsection{Product subcomplexes and containment}
Let \(X\) be a square complex that is either compact or \(\CAT\). Consider local isometries \(\cubmap{f}:\Gamma_1\times\Gamma_2\to X\) from products of two graphs, whose restrictions to the \emph{coordinate fibers} \(\Gamma_1\times\{x_2\}\) and \(\{x_1\}\times\Gamma_2\), for all \(x_j\in\Gamma_j\), are embeddings.

For two such maps \(\cubmap{f}:\Gamma_1\times\Gamma_2\to X\) and \(\cubmap{g}:\Lambda_1\times\Lambda_2\to X\), write \(\cubmap{f}\preceq\cubmap{g}\) if \(\cubmap{f}=\cubmap{g}\circ\cubmap{h}\) for a product embedding \(\cubmap{h}:\Gamma_1\times\Gamma_2\hookrightarrow\Lambda_1\times\Lambda_2\), allowing interchange of factors. This relation is a preorder and determines an equivalence relation
\[\cubmap{f}\sim\cubmap{g}\quad\Longleftrightarrow\quad
\cubmap{f}\preceq\cubmap{g}\ \text{and}\ \cubmap{g}\preceq\cubmap{f}.\]
Under our assumptions on \(X\), this is equivalent to the existence of a factorization \(\cubmap{f}=\cubmap{g}\circ\cubmap{h}\) with \(\cubmap{h}\) a product isomorphism. Indeed, if \(X\) is compact, then the graph factors are finite because the coordinate fibers embed in \(X\), so embeddings in both directions force the intervening product embeddings to be isomorphisms. If \(X\) is \(\CAT\), then \(\pi_1\)-injectivity forces the graph factors to be trees and the maps are isometric embeddings. Mutual containment therefore gives equal images, so again the intervening product embeddings are isomorphisms.

\begin{definition}\label{def:products}
For a connected NPC square complex \(X\) that is either compact or \(\CAT\), a \emph{product subcomplex} is defined as a \(\sim\)-equivalence class \([\cubmap{f}]\) of a local isometry \(\cubmap{f}:\Gamma_1\times\Gamma_2\to X\) from the product of two graphs with embedded coordinate fibers. Whenever no confusion can arise, we denote a product subcomplex by a representative map \(\cubmap{f}\). Its \emph{image} is written \(\im{\cubmap{f}}\). A product subcomplex is \emph{standard} if both graph factors are leafless.
\end{definition}

The relation \(\preceq\) induces a partial order on product subcomplexes, which we call \emph{containment}. When \(X\) is \(\CAT\), it agrees with inclusion of images by \Cref{lem:productrestriction}. 

\begin{lemma}[Containment, {\cite[Lemma~2.6]{Oh22}}]\label{lem:coarse}
Let \(\cubmap{u},\cubmap{v}\) be standard product subcomplexes of a \(\CAT\) square complex \(X'\). If \(\im{\cubmap{u}}\subseteq\mathcal N_r(\im{\cubmap{v}})\) for some finite \(r\), then \(\im{\cubmap{u}}\subseteq\im{\cubmap{v}}\).
\end{lemma}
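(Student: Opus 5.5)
Throughout, I write \(T_1\times T_2\) for the domain of \(\cubmap{u}\); by \(\pi_1\)-injectivity both factors are trees, and they are leafless since \(\cubmap{u}\) is standard. The map \(\cubmap{u}\) is an isometric embedding with convex image \(U=\im{\cubmap{u}}\), and similarly \(V=\im{\cubmap{v}}\) is convex, the image of \(S_1\times S_2\) with \(S_1,S_2\) leafless trees.

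The plan is to argue by contradiction using nearest-point projection onto the convex subcomplex \(V\). Suppose some point \(p\in U\) lies outside \(V\). Since both images are subcomplexes, we may take \(p\) to be a vertex of \(U\setminus V\); cubical convexity lets us work with combinatorial structure throughout.

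First, I will find a hyperplane \(H\) of \(X'\) separating \(p\) from \(V\). Because \(V\) is a convex subcomplex, the combinatorial gate (nearest-point projection) of \(p\) to \(V\) exists. Any hyperplane crossed by a geodesic from \(p\) to its gate separates \(p\) from all of \(V\), and in particular it does not cross \(V\).

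Second, I will use the leafless product structure of \(U\) to escape far from \(H\) on the side away from \(V\). Let \(\kappa\) be the carrier of \(H\). Then \(H\cap U\) is either empty or a hyperplane of \(U\). A hyperplane of \(T_1\times T_2\) has the form \(e\times T_2\) or \(T_1\times e\) for an edge \(e\) of a tree. Since the trees are leafless, every halfspace of \(U\) contains points arbitrarily far from its bounding hyperplane. This holds whether \(H\cap U=\emptyset\) or not, and in particular for the halfspace containing \(p\). Concretely, starting at \(p\), extend a geodesic ray in the relevant tree factor away from \(e\). Leaflessness lets this ray be continued indefinitely. The resulting ray in \(U\) crosses infinitely many hyperplanes, each of which also separates the ray's tail from \(H\). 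These hyperplanes are nested on the side of \(H\) away from \(V\).

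Third, I will convert this into a distance estimate and a contradiction. Every hyperplane separating a point \(x\) from \(V\) contributes at least the edge length to the combinatorial distance. In a finite-dimensional CAT(0) cube complex, the \(\ell_2\) distance from \(x\) to \(V\) is bounded below by a constant times the number of hyperplanes separating \(x\) from \(V\). Choose the ray so that all its hyperplanes are pairwise disjoint, i.e.\ nested, so that they all separate the tail from \(V\). This is possible because we move in a single tree factor away from \(e\): the ray's edges in \(T_j\) are dual to hyperplanes of \(U\) whose extensions in \(X'\) are disjoint from \(H\) and from each other by convexity of \(U\). Hence the points of the ray are at unbounded distance from \(V\), contradicting \(U\subseteq\mathcal N_r(V)\).

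The main obstacle is the second step: showing that the hyperplanes crossed by the ray genuinely separate its far points from \(V\), rather than merely from \(p\). I would handle this as follows. If the \(k\)-th ray hyperplane \(H_k\) met \(V\), then \(H_k\), \(H\), and the hyperplanes between them would create a configuration where \(H_k\) crosses \(H\) or is not nested. So I choose the ray direction inside the factor \(T_j\) that is transverse to nothing in \(H\cap U\): if \(H\cap U=T_1\times e\), move in \(T_2\), away from \(e\).

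The delicate case is that a ray hyperplane may still meet \(V\) elsewhere. To control this, I would use both factors. Among the hyperplanes separating \(p\) from \(V\), pick \(H\) adjacent to \(V\)'s side. Then note that there are only finitely many hyperplanes separating \(p\) from \(V\), since their number is the combinatorial distance. If all far points of \(U\) stayed within \(r\) of \(V\), a flat in \(U\) through \(p\) avoiding the relevant hyperplanes would lie within \(r\) of \(V\) but on the far side of \(H\). Here I use leaflessness to choose such a flat, with both coordinate rays directed away from \(e\) in their factor. A flat within finite distance of the convex set \(V\) parallels a flat in \(V\) via the flat strip theorem, producing a flat strip crossing \(H\). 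Its parallel copy in \(V\) would then be on the far side of \(H\), which is impossible, or \(H\) would cross that flat, which contradicts \(H\) not meeting \(V\). This flat-strip argument is the one I would try first.
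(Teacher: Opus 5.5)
There is a genuine gap in the case $H\cap U=\varnothing$, which your second step explicitly allows. Your claim that the hyperplanes dual to the ray's edges are disjoint from $H$ ``by convexity of $U$'' holds only when $H$ itself crosses $U$. In that case, $H$ and each ray hyperplane both cross the convex subcomplex $U$ and are disjoint inside it, hence disjoint in $X'$. Each ray hyperplane then lies on the far side of $H$ and separates the tail of the ray from $V$, so this case works. Note that separation from $V$ comes from this disjointness from $H$, not from the mutual nesting of the ray hyperplanes. When $H$ misses $U$, however, the ray hyperplanes may cross both $H$ and $V$, and the ray need not leave any neighborhood of $V$.

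Your fallback in the last paragraph does correctly produce an isometric slab $F\times[0,c]$ joining a flat $F\subseteq U$ to a flat of $V$, using the bounded convex distance function on $F$. But the contradiction you then draw from the position of $H$ does not follow: $H$ simply crosses this slab, and nothing forces the parallel flat onto the wrong side of $H$ or forces $H$ to meet it. (Also, a flat through $p$ avoiding $H$ need not exist when $H$ crosses $U$, e.g.\ when $U$ is a single flat; that case, however, is already covered by the ray argument.)

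The missing ingredient is that $X'$ is two-dimensional. Your proposal never uses this, and the statement is false without it. In the standard cubulation of $\mathbb R^3$, the flats $U=\mathbb R^2\times\{0\}$ and $V=\mathbb R^2\times\{1\}$ satisfy $U\subseteq\mathcal N_1(V)$ but $U\not\subseteq V$. There $H=\{z=\tfrac12\}$ misses $U$, every ray hyperplane crosses both $H$ and $V$, and $H$ crosses the slab $\mathbb R^2\times[0,1]$ with no contradiction.

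The repair is to conclude from the slab itself. If $c>0$, then $F\times[0,c]$ is an isometrically embedded three-dimensional Euclidean region, which cannot exist in a square complex. Hence $c=0$ and $F\subseteq V$. Once you have this, the hyperplane and ray machinery is unnecessary.

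This is the paper's proof, run on all of $U$ at once. The function $\dist_{X'}(\cdot,V)$ is convex and bounded on the geodesically complete space $U$, hence constant. The bridge to the projection then contains $U\times[0,\dist_{X'}(U,V)]$, and dimension two forces this distance to vanish.
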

\begin{proof}
Distance to the convex set \(\im{\cubmap{v}}\) is convex and bounded on the geodesically complete space \(\im{\cubmap{u}}\), hence constant there. The bridge to its projection onto \(\im{\cubmap{v}}\) contains
\[\im{\cubmap{u}}\times[0,\dist_{X'}(\im{\cubmap{u}},\im{\cubmap{v}})].\]
Dimension two forces this interval to be trivial.
\end{proof}

\begin{definition}[\Cref{def:products}, continued]\label{def:mapmaximal}
A standard product subcomplex \(\cubmap{f}\) of \(X\) is called a \emph{maximal product subcomplex} if \(\cubmap{f}\preceq\cubmap{g}\) implies \(\cubmap{f}\sim\cubmap{g}\) for every standard product subcomplex \(\cubmap{g}\) of \(X\).
\end{definition}

\begin{lemma}[Maximal product subcomplexes]\label{lem:maximalproducts}
A standard product subcomplex of a \(\CAT\) square complex \(X'\) is maximal if and only if its image is inclusion-maximal among embedded products of two leafless trees. Every standard product subcomplex of \(X'\) factors through a maximal product subcomplex.
\end{lemma}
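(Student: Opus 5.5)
The plan is first to convert everything in \(X'\) into statements about images, and then to use \Cref{lem:coarse} for the existence statement.

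\emph{Reduction to images.} Since \(X'\) is \(\CAT\), every local isometry \(\cubmap{f}:\Gamma_1\times\Gamma_2\to X'\) is \(\pi_1\)-injective. Hence \(\Gamma_1,\Gamma_2\) are trees and \(\cubmap{f}\) is an isometric embedding with convex image. A standard product subcomplex is therefore the same as an embedded product \(T_1\times T_2\) of two leafless trees. Two representatives are equivalent exactly when their images agree, as noted after the definition of \(\sim\). By \Cref{lem:productrestriction}, containment \(\cubmap{f}\preceq\cubmap{g}\) holds exactly when \(\im{\cubmap{f}}\subseteq\im{\cubmap{g}}\). Indeed, the inclusion of one embedded product into another is a cubical immersion of a product of graphs into a product of graphs, so it splits factor-wise after possibly swapping factors. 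Each factor map is an injective cubical map of trees, so the inclusion is a product embedding. With this dictionary, the first assertion of the lemma is a direct translation of \Cref{def:mapmaximal}.

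\emph{Existence of a maximal extension.} Fix a standard \(\cubmap{u}\) with image \(P_0=T_1\times T_2\). I would apply Zorn's lemma to the set of images of standard product subcomplexes containing \(P_0\), ordered by inclusion. Take a chain \(\{P_\alpha=T_1^\alpha\times T_2^\alpha\}\). By the splitting above, the inclusions \(P_\alpha\subseteq P_\beta\) are factor-wise, with consistent matching of factors. The factor matching is fixed by \(P_0\), since the \(T_1\)-direction of \(P_0\) determines it. So the factors form directed systems of leafless trees under injective cubical maps. Their unions \(T_1^\infty,T_2^\infty\) are leafless trees, and \(P_\infty=\bigcup P_\alpha\cong T_1^\infty\times T_2^\infty\).

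\emph{Main obstacle.} The hard step is checking that \(P_\infty\) is again a standard product subcomplex. Concretely, \(P_\infty\) must be a subcomplex of \(X'\) and the inclusion must be a local isometry. Being a subcomplex is clear, since it is a union of subcomplexes. For the local isometry condition, I would argue locally: the link of \(P_\infty\) at a vertex \(v\) is the increasing union of the links of the \(P_\alpha\) at \(v\). Each of these is a full subgraph of the link of \(v\) in \(X'\), so the union is a cubical immersion of a product of graphs. It is then a local isometry by \Cref{lem:productrestriction}. Convexity and embeddedness of \(P_\infty\) then follow from \cite[Proposition~II.4.14]{BH99}.

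\emph{Conclusion.} Zorn's lemma yields a maximal element \(P\supseteq P_0\). By the first part, \(P\) is the image of a maximal product subcomplex through which \(\cubmap{u}\) factors. If \(X'\) is locally finite and cocompact, one could avoid Zorn by using local finiteness. The union argument, however, already suffices in general. \Cref{lem:coarse} is not strictly needed here, though it would give uniqueness statements later.
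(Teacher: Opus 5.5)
Your proposal is correct and follows the paper's proof. The first assertion comes from translating factorization into image inclusion via \Cref{lem:productrestriction}. The second is Zorn's lemma on chains of embedded leafless products containing \(\im{\cubmap{u}}\): factor directions are aligned by \(\im{\cubmap{u}}\), and the union is the product of the unions of coordinate fibers. Your link check, that increasing unions of full link subgraphs stay full, only makes explicit what the paper leaves implicit, and, as you say, \Cref{lem:coarse} is not needed.
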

\begin{proof}
The first assertion follows from \Cref{lem:productrestriction}, since factorization of product embeddings is equivalent to image inclusion.
For the second, fix a standard product subcomplex \(\cubmap{u}\). The union of any inclusion chain of embedded products of two leafless trees containing \(\im{\cubmap{u}}\) is again such a product: align the factor directions on a fixed square of \(\im{\cubmap{u}}\), and take the unions of the corresponding coordinate fibers through a vertex of that square. Zorn's lemma gives the desired maximal product subcomplex.
\end{proof}

As we will see in \Cref{prop:bases}, two standard product subcomplexes may have the same image while being incomparable under \(\preceq\). This illustrates the distinction between our map-based definition and the image-based definition in \cite[Definition~2.10]{Oh22}.

\subsection{Elevations}
We first relate factorization to inclusion of elevation images. The following lemma applies to compact square complexes without a weak specialness assumption.

\begin{lemma}[Factorization and elevation]\label{lem:factorization}
Let \(X\) be a compact connected NPC square complex, and let \(\cubmap{f}:\Gamma_1\times\Gamma_2\to X\) and \(\cubmap{g}:\Lambda_1\times\Lambda_2\to X\) be product subcomplexes. Then \(\cubmap{f}\preceq\cubmap{g}\) if and only if there exist elevations \(\cubmapu{f}\) and \(\cubmapu{g}\) such that \(\im{\cubmapu{f}}\subseteq\im{\cubmapu{g}}\).
More precisely, for any such pair of elevations, the map \(\cubmap{h}'=\cubmapu{g}^{-1}\circ\cubmapu{f}\) is a product embedding which is an elevation of a product embedding \(\cubmap{h}:\Gamma_1\times\Gamma_2\hookrightarrow\Lambda_1\times\Lambda_2\) satisfying \(\cubmap{f}=\cubmap{g}\circ\cubmap{h}\).

Moreover, the elevation images are equal if and only if \(\cubmap{h}\) is a product isomorphism.
\end{lemma}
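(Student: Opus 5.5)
We prove both directions through the deck group \(\pi_1(X)\) acting on \(\widetilde X\), together with the fact that elevations of local isometries are isometric embeddings with convex image \cite[Proposition~II.4.14]{BH99}.

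\emph{Forward direction.} Suppose \(\cubmap{f}=\cubmap{g}\circ\cubmap{h}\) with \(\cubmap{h}\) a product embedding, possibly interchanging factors. Fix an elevation \(\cubmapu{g}:\widetilde{\Lambda_1}\times\widetilde{\Lambda_2}\to\widetilde X\), where the universal cover of \(\Lambda_1\times\Lambda_2\) is the product of the tree universal covers. Lift \(\cubmap{h}\circ\univ\) to a map \(\widetilde{\cubmap h}:\widetilde{\Gamma_1}\times\widetilde{\Gamma_2}\to\widetilde{\Lambda_1}\times\widetilde{\Lambda_2}\). This lift is a product of lifts of the factor maps, because lifting respects product decompositions and factor interchange. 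Then \(\cubmapu{g}\circ\widetilde{\cubmap h}\) lifts \(\cubmap{f}\circ\univ\), so it is an elevation \(\cubmapu{f}\), and its image lies in \(\im{\cubmapu{g}}\).

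\emph{Converse and the "more precisely" clause.} Let \(\cubmapu f,\cubmapu g\) be elevations with \(\im{\cubmapu f}\subseteq\im{\cubmapu g}\). Since \(\cubmapu g\) is injective, \(\cubmap h'=\cubmapu g^{-1}\circ\cubmapu f\) is a well-defined injective cubical map between products of trees. It is an immersion because \(\cubmapu f\) is one and \(\cubmapu g\) is an isometric embedding. By \Cref{lem:productrestriction}, \(\cubmap h'\) therefore splits factor-wise, after possibly interchanging factors, so it is a product embedding.

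To descend, use the equivariance of elevations. There are injective homomorphisms \(\pi_1(\Gamma_1\times\Gamma_2)\to\pi_1(X)\) and \(\pi_1(\Lambda_1\times\Lambda_2)\to\pi_1(X)\) for which \(\cubmapu f\) and \(\cubmapu g\) are equivariant. Take \(\gamma\in\pi_1(\Gamma_1\times\Gamma_2)\), with image \(\gamma'\in\pi_1(X)\). Then \(\gamma'\) preserves \(\im{\cubmapu f}\subseteq\im{\cubmapu g}\), so \(\gamma'\) maps \(\im{\cubmapu g}\) to a translate meeting \(\im{\cubmapu g}\). The key step is to conclude that \(\gamma'\) lies in the stabilizer of \(\im{\cubmapu g}\), which is the image of \(\pi_1(\Lambda_1\times\Lambda_2)\).

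I expect this step to be the main obstacle. Two elevations of \(\cubmap g\) may intersect without being equal, so one must show that \(\gamma'\im{\cubmapu g}=\im{\cubmapu g}\). I would argue as follows. Both \(\gamma'\im{\cubmapu g}\) and \(\im{\cubmapu g}\) contain the \(\gamma'\)-invariant product of leafless-or-not trees \(\im{\cubmapu f}\). Alternatively, and more robustly, first pass to the case where \(\gamma\) corresponds to a loop in \(\Gamma_1\times\Gamma_2\). Its image loop under \(\cubmap h=\)? is not yet defined, so instead argue directly: \(\cubmapu f\circ\gamma=\gamma'\circ\cubmapu f\). Then \(\cubmap h'\circ\gamma=\cubmapu g^{-1}\gamma'\cubmapu f\). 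Now compose with the covering maps: \(\univ_X\circ\cubmapu g=\cubmap g\circ\univ\). The map \(\cubmap h'\) descends to a map into \(\Lambda_1\times\Lambda_2\) exactly when \(\univ\circ\cubmap h'\) is \(\gamma\)-invariant. This holds because the path from \(\cubmap h'(p)\) to \(\cubmap h'(\gamma p)\) in \(\widetilde{\Lambda_1}\times\widetilde{\Lambda_2}\) maps under \(\cubmapu g\) to a path from \(\cubmapu f(p)\) to \(\gamma'\cubmapu f(p)\). That path projects to a closed loop in \(X\) lying in \(\cubmap g\). Its lift through \(\cubmap g\) closes up, since \(\cubmap g\) is an embedding on coordinate fibers and a local isometry, using uniqueness of lifts for local isometries between NPC complexes. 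Hence \(\gamma'\) stabilizes \(\im{\cubmapu g}\), and \(\cubmap h'\) descends to \(\cubmap h\) with \(\cubmap f=\cubmap g\circ\cubmap h\).

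For injectivity of \(\cubmap h\), and hence that it is a product embedding, check coordinate fibers. These embed into \(X\) under \(\cubmap f\), and \(\cubmap h\) is a product map, so each factor map is injective.

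\emph{Equality of images.} If \(\cubmap h\) is a product isomorphism, then \(\cubmap h'\) is a lift of an isomorphism and is surjective, so the images are equal. Conversely, if the images are equal, then \(\cubmap h'\) is a bijection. Its factor maps are then bijective tree maps, and they descend to covering maps \(\Gamma_i\to\Lambda_i\) (or interchanged) that are injective by the fiber argument above. These are therefore isomorphisms.
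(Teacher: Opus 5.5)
Your forward direction, the factor-wise splitting of \(\cubmap{h}'=\cubmapu{g}^{-1}\circ\cubmapu{f}\) via \Cref{lem:productrestriction}, and your argument for the case of equal images are correct and agree with the paper. The gap is in the step you flagged yourself: the descent of \(\cubmap{h}'\).

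\textbf{The gap.} The framing you start from is false. The stabilizer of \(\im{\cubmapu{g}}\) in \(\pi_1(X)\) need not be the image of \(\pi_1(\Lambda_1\times\Lambda_2)\). For the twofold covering \(\frq:C_6\times C_6\to Y\) of \Cref{ex:diagonaltorus}, which has embedded coordinate fibers, an elevation is onto \(\widetilde Y\). So the stabilizer is all of \(\pi_1(Y)\), while \(\frq_*\pi_1(C_6\times C_6)\) has index two (compare \Cref{prop:stabilizer}). Showing that \(\gamma'\) stabilizes \(\im{\cubmapu{g}}\) would therefore not let you descend \(\cubmap{h}'\).

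Your later argument does target the correct criterion: invariance of \(\univ_{\Lambda}\circ\cubmap{h}'\) under \(\pi_1(\Gamma_1\times\Gamma_2)\). But it rests on the claim that the lift through \(\cubmap{g}\) ``closes up'' by uniqueness of lifts, and that does not follow. The endpoints \(\univ_{\Lambda}\cubmap{h}'(x)\) and \(\univ_{\Lambda}\cubmap{h}'(\gamma x)\) are merely two points with the same \(\cubmap{g}\)-image, and \(\cubmap{g}\) is not injective. Uniqueness of lifts through an immersion shows that a lift is closed only if some closed lift is already known, such as \(\cubmap{h}\circ\delta\). Using that here is circular. Fiber injectivity of \(\cubmap{g}\) is the right ingredient, but for a general \(\gamma\) the two endpoints need not lie in a common coordinate fiber of \(\Lambda_1\times\Lambda_2\).

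\textbf{The repair, and the paper's route.} You need to use that \(\cubmap{h}'=\widetilde\iota_1\times\widetilde\iota_2\) is already a product map. Then check invariance only on \(\pi_1(\Gamma_1)\times 1\) and \(1\times\pi_1(\Gamma_2)\), which generate. For \(\gamma=(\gamma_1,1)\), the points \(\cubmap{h}'(x)\) and \(\cubmap{h}'(\gamma x)\) share their second coordinate. Their projections therefore lie in one fiber \(\Lambda_1\times\{b_2\}\). Both have the same \(\cubmap{g}\)-image, namely \(\cubmap{f}\) applied to the projection of \(x\). Injectivity of \(\cubmap{g}\) on that fiber makes them equal, and with this your route works.

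The paper reaches the same point without deck groups. It takes the fiber restrictions \(\cubmap{f}_1=\cubmap{f}(\cdot,a_2)\) and \(\cubmap{g}_1=\cubmap{g}(\cdot,b_2)\). Projecting the fiber restriction of \(\cubmapu{f}=\cubmapu{g}\circ\cubmap{h}'\) gives \(\im{\cubmap{f}_1}\subseteq\im{\cubmap{g}_1}\), and the paper sets \(\iota_1=\cubmap{g}_1^{-1}\circ\cubmap{f}_1\). Injectivity of \(\cubmap{g}_1\) then gives the covering identity \(\univ_{\Lambda_1}\circ\widetilde\iota_1=\iota_1\circ\univ_{\Gamma_1}\) directly. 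Injectivity of \(\iota_1\) is inherited from \(\cubmap{f}_1\). This avoids basepoints, equivariance and the generation argument entirely.
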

\begin{proof}
Suppose that \(\cubmap{f}=\cubmap{g}\circ\cubmap{h}\) for a product embedding \(\cubmap{h}\). Choose elevations \(\cubmapu{g}\) and \(\cubmapu{h}\). Then \(\cubmapu{g}\circ\cubmapu{h}\) is an elevation of \(\cubmap{f}\), giving the required image inclusion.

Conversely, fix elevations with nested images. Since both elevations are embeddings, the map \(\cubmap{h}'=\cubmapu{g}^{-1}\circ\cubmapu{f}\) is well defined, with the inverse taken on \(\im{\cubmapu{g}}\). By \Cref{lem:productrestriction}, after possibly interchanging the target factors, we have
\(\cubmap{h}'=\widetilde\iota_1\times\widetilde\iota_2\), where \(\widetilde\iota_j:\widetilde{\Gamma_j}\hookrightarrow\widetilde{\Lambda_j}\) are the chosen graph embeddings.
Choose vertices \(\tilde a_j\in\widetilde{\Gamma_j}\), and put \(a_j=\univ_{\Gamma_j}(\tilde a_j)\) and \(b_j=\univ_{\Lambda_j}(\widetilde\iota_j(\tilde a_j))\).
Define the restrictions \(\cubmap{f}_1:\Gamma_1\to X\) and \(\cubmap{g}_1:\Lambda_1\to X\) by
\[\cubmap{f}_1(x)=\cubmap{f}(x,a_2),\qquad\cubmap{g}_1(y)=\cubmap{g}(y,b_2).\]
Both are embeddings by \Cref{def:products}. Restrict \(\cubmapu{f}=\cubmapu{g}\circ\cubmap{h}'\) to \(\widetilde{\Gamma_1}\times\{\tilde a_2\}\) and compose with \(\univ_X\). This gives
\(\cubmap{f}_1\circ\univ_{\Gamma_1}=\cubmap{g}_1\circ\univ_{\Lambda_1}\circ\widetilde\iota_1\).
Since \(\univ_{\Gamma_1}\) is surjective, \(\im{\cubmap{f}_1}\subseteq\im{\cubmap{g}_1}\). Thus \(\iota_1:=\cubmap{g}_1^{-1}\circ\cubmap{f}_1\) is a graph embedding. Injectivity of \(\cubmap{g}_1\) now yields \(\univ_{\Lambda_1}\circ\widetilde\iota_1=\iota_1\circ\univ_{\Gamma_1}\).
The same argument on the other coordinate fiber gives a graph embedding \(\iota_2\) satisfying the analogous equality. Set \(\cubmap{h}=\iota_1\times\iota_2\). The two covering identities give
\[(\univ_{\Lambda_1}\times\univ_{\Lambda_2})\circ\cubmap{h}'=\cubmap{h}\circ
(\univ_{\Gamma_1}\times\univ_{\Gamma_2}).\]
Hence \(\cubmap{h}'\) is an elevation of \(\cubmap{h}\) and \(\cubmap{f}=\cubmap{g}\circ\cubmap{h}\).

If the elevation images are equal, then both \(\widetilde\iota_j\) are surjective. The covering identities above imply that both \(\iota_j\) are surjective, hence graph isomorphisms. Conversely, if \(\cubmap{h}\) is a product isomorphism, then its elevation \(\cubmap{h}'\) is also a product isomorphism, so the elevation images are equal.
\end{proof}

\subsection{Intersection cores}\label{sec:Intersection_Cores}
The following lemma provides the core maps used in the definition of the intersection complex. It also describes these maps downstairs when the given product subcomplexes are represented by elevations.

\begin{lemma}[Intersection cores]\label{lem:geometriccores}
Let \(X\) be a \(\CAT\) square complex, and let \(\cubmap{u}_1,\ldots,\cubmap{u}_k\) be product subcomplexes of \(X\) such that \(\overline{K}=\bigcap_i\im{\cubmap{u}_i}\) contains a flat. There is a standard product subcomplex \(\cubmap{u}_{\overline{K}}\) of \(X\) whose image is
\[\Core{\overline{K}}=\bigcup\{F:F\text{ is a flat contained in }\overline{K}\}.\]
Every standard product subcomplex with image contained in \(\overline{K}\) factors uniquely through \(\cubmap{u}_{\overline{K}}\). In particular, \(\cubmap{u}_{\overline{K}}\) is determined up to equivalence.

Suppose, in addition, that \(X=\widetilde Z\) for a compact connected NPC square complex \(Z\), and that each \(\cubmap{u}_i\) is represented by an elevation of a product subcomplex of \(Z\). Then there are product subcomplexes \(\cubmap{m}_{\overline{K}}\preceq\cubmap{f}\) of \(Z\), with \(\cubmap{m}_{\overline{K}}\) standard, and compatible elevations satisfying
\[\im{\cubmapu{f}}=\overline{K},\qquad
\im{\cubmapu{m}_{\overline{K}}}=\Core{\overline{K}}.\]
Moreover, \(\distH(\overline{K},\Core{\overline{K}})\le b_Z\) for a constant depending only on \(Z\).
\end{lemma}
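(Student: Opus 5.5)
The plan is to treat the two parts separately: first the structure of \(\overline K\) inside the \(\CAT\) square complex \(X\), then the descent to \(Z\).

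\textbf{Step 1: \(\overline K\) is a convex product.} Each \(\im{\cubmap{u}_i}\) is convex. It is a product of two trees embedded by \Cref{lem:productrestriction}, since \(\pi_1\)-injectivity forces the graph factors to be trees. Fix a flat \(F_0\subseteq\overline K\) and a square \(s_0\subseteq F_0\). In each \(\im{\cubmap{u}_i}\), the two factor directions are determined on \(s_0\). Align these directions across \(i\) using the two hyperplane directions of \(s_0\). Then \(\overline K\) is a convex subcomplex of each product \(T^i_1\times T^i_2\). I will argue that it is the product \(A_1\times A_2\) of the coordinate fibers of \(\overline K\) through a vertex \(v\in s_0\), using the following facts:
\begin{itemize}
\item a convex subcomplex of a product of trees containing a square is a product of subtrees, by checking links and convexity;
\item intersections of such product subcomplexes with aligned factors are products of intersected fibers;
\item the factor identification is consistent, because in any two products the coordinate lines through \(v\) lie in hyperplane carriers determined by \(s_0\).
\end{itemize}
This should also give that \(\overline K\) is the image of a product subcomplex \(A_1\times A_2\to X\).

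\textbf{Step 2: the core and the universal property.} Let \(C_j\subseteq A_j\) be the union of all bi-infinite lines in \(A_j\). This is a leafless subtree, nonempty because \(F_0\subseteq\overline K\). A flat in \(A_1\times A_2\) is a product of lines by \Cref{lem:productrestriction}, so \(\Core{\overline K}=C_1\times C_2\). Take \(\cubmap{u}_{\overline K}\) to be this inclusion. If \(\cubmap{w}\) is standard with image in \(\overline K\), its leafless tree factors are unions of lines. Hence \(\im{\cubmap{w}}\) is a union of flats and lies in \(C_1\times C_2\). \Cref{lem:productrestriction} then yields a product embedding \(\cubmap{h}\) with \(\cubmap{w}=\cubmap{u}_{\overline K}\circ\cubmap{h}\). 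This \(\cubmap{h}\) is unique because \(\cubmap{u}_{\overline K}\) is injective.

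\textbf{Step 3: descent to \(Z\).} Now assume \(X=\widetilde Z\) and \(\cubmap{u}_i=\cubmapu{g}_i\) are elevations. Let \(H=\bigcap_i\Stab(\im{\cubmapu{g}_i})\le\pi_1Z\). It preserves \(\overline K\) and \(\Core{\overline K}\), and it preserves the aligned factor structure after passing to the index-\(\le2\) subgroup that does not swap factors. Define \(\cubmap{f}:(A_1\times A_2)/H\to Z\). Showing that the quotient is a product of graphs is where I expect the main difficulty. The first sub-claim is that \(H\) acts cocompactly on \(\overline K\). I would prove this by a standard double-coset finiteness argument, using that each \(\im{\cubmapu{g}_i}\) is cocompact under its stabilizer and \(Z\) is compact. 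The intersection of finitely many convex cocompact subcomplexes is cocompact under the intersection of stabilizers, since there are only finitely many relative translates up to the action. The second sub-claim is that the quotient is a product. For this I would use that \(H\le\prod_j\Stab(\text{factor})\) factor-wise, because \(H\) lies in the product-base groups of the \(\cubmap{u}_i\), which act as products. I would take \(\cubmap{f}\) so that coordinate fibers embed; since \(\im{\cubmap{f}}\) fibers factor through the fibers of some \(\cubmap{g}_i\), which embed, this should follow. The same construction restricted to \(C_1\times C_2\) gives \(\cubmap{m}_{\overline K}\preceq\cubmap{f}\), with compatible elevations by \Cref{lem:factorization}.

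\textbf{Step 4: the bound \(b_Z\).} By cocompactness, \(\overline K\) has finitely many \(H\)-orbits of vertices. However, \(b_Z\) must be independent of the family. I would instead bound the depth of the hanging trees \(A_j\setminus C_j\) directly. A long branch of \(A_j\) leaving \(C_j\) crossed with a line of \(C_{3-j}\) gives a long strip. Since \(A_j\setminus C_j\) has no bi-infinite line and \(\overline K\) is \(H\)-cocompact, branches are finite. Bounding their length uniformly in terms of the finitely many cube types of \(Z\) is the point I am least sure of. I would try a pigeonhole on hyperplanes of \(Z\): a sufficiently long geodesic branch repeats a hyperplane pattern, and the repetition lets one build a translate in \(H\) extending the branch to a line, contradicting that the branch lies outside \(C_j\).
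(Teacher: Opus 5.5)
Your Steps~1 and~2 agree with the paper's treatment of the \(\CAT\) part, up to presentation. The paper observes that \(\overline K\) is a convex subcomplex of the product \(\im{\cubmap{u}_1}\). Hence it is a product of subtrees whose fibers through a vertex \(x\) of a common flat are the intersections of the corresponding fibers, and the core is the product of the unions of lines.

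\textbf{The gap is in Step~3.} Your group \(H=\bigcap_i\Stab(\im{\cubmapu{g}_i})\) is an intersection of \emph{full} stabilizers of elevation images. The claim that \(H\) lies in the product-base groups is false: such stabilizers can be strictly larger than \(\pi_1(\Lambda_{i,1})\times\pi_1(\Lambda_{i,2})\) and need not split.

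The construction already breaks for \(k=1\). Take \(Z=Y\) from \Cref{ex:diagonaltorus} and \(\cubmap{u}_1\) an elevation of \(\frq:C_6\times C_6\to Y\). Then \(\overline K=\widetilde Y=\mathbb R\times\mathbb R\), and \(H=\pi_1(Y)=\langle(6,0),(3,3)\rangle\) contains \((3,3)\notin 6\ZZ\times6\ZZ\). Every element of \(H\) acts factor-wise without swapping factors. Yet \(H\) is not a product of factor groups, and \(\overline K/H=Y\) is not a product of graphs: a horizontal and a vertical hyperplane circle cross in two squares. So your \(\cubmap{f}\) is not a product subcomplex, and passing to a non-swapping subgroup does not help. \Cref{prop:stabilizer} gives a stabilizer that is not even a product of free groups.

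Your fiber-embedding argument is also incomplete. For a subtree \(A\) of \(\widetilde\Lambda\), the induced map \(A/\Stab(A)\to\Lambda\) need not be injective. So factoring through an embedded fiber of some \(\cubmap{g}_i\) does not by itself give embedded fibers.

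\textbf{The missing idea is to descend directly downstairs instead of quotienting by stabilizers.}
\begin{itemize}
\item The \(j\)-th coordinate fiber of \(\im{\cubmapu{g}_i}\) through \(x\) is the component through \(x\) of \(\univ_Z^{-1}(\Gamma_{i,j})\). Here \(\Gamma_{i,j}\subset Z\) is the embedded finite coordinate-fiber graph of \(\cubmap{g}_i\).
\item Let \(\Gamma_j\) be the component through \(\univ_Z(x)\) of \(\bigcap_i\Gamma_{i,j}\). Path lifting shows that the \(j\)-th fiber of \(\overline K\) through \(x\) is the component through \(x\) of \(\univ_Z^{-1}(\Gamma_j)\).
\item Restrict \(\cubmap{g}_1\) to \(\Gamma_1\times\Gamma_2\), viewed inside its domain factors. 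This gives \(\cubmap{f}\) with embedded fibers and a compatible elevation whose image is exactly \(\overline K\).
\item Pruning the hanging trees of the \(\Gamma_j\) gives \(\cubmap{m}_{\overline K}\).
\end{itemize}

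This also settles Step~4 without any pigeonhole argument. The hanging trees in the fibers of \(\overline K\) are lifts of hanging trees of the finite graphs \(\Gamma_j\subset Z\). Their depth is therefore at most the number of edges of \(Z\).

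Your approach can be salvaged by replacing \(H\) with the intersection of the product-base groups at compatible basepoints. That intersection does split as \(\bigcap_i\pi_1(\Gamma_{i,1})\times\bigcap_i\pi_1(\Gamma_{i,2})\). But identifying these factors with \(\pi_1(\Gamma_j)\), and the quotient fibers with embedded graphs, is exactly the downstairs argument above.
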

\begin{proof}
Choose a vertex \(x\) on a common flat, which aligns the product directions of the \(\cubmap{u}_i\). The convex cubical intersection \(\overline{K}\) is a product \(T_1\times T_2\) of subtrees. Its coordinate fibers through \(x\) are the intersections of the corresponding fibers in the \(\im{\cubmap{u}_i}\). Let \(T_j^\circ\) be the union of all bi-infinite geodesics in \(T_j\). Each \(T_j^\circ\) is a nonempty convex leafless subtree, and
\[\Core{\overline{K}}=T_1^\circ\times T_2^\circ.\]
The inclusion of this product defines \(\cubmap{u}_{\overline{K}}\). The image of any standard product subcomplex contained in \(\overline{K}\) is a union of flats, hence lies in this core. This gives the required factorization by \Cref{lem:productrestriction}, unique because \(\cubmap{u}_{\overline{K}}\) embeds.

Now assume the additional hypotheses. For each \(i\) and \(j=1,2\), the \(j\)-th coordinate fiber through \(x\) in \(\im{\cubmap{u}_i}\) is the component of \(\univ_Z^{-1}(\Gamma_{i,j})\) containing \(x\), for an embedded finite coordinate-fiber graph \(\Gamma_{i,j}\subset Z\). Let \(\Gamma_j\) be the component through \(\univ_Z(x)\) of \(\bigcap_i\Gamma_{i,j}\). The corresponding coordinate fiber of \(\overline{K}\) is the component of \(\univ_Z^{-1}(\Gamma_j)\) containing \(x\), since every path in \(\Gamma_j\) lifts in each original fiber.

Using the coordinate-fiber embeddings of the first downstairs map, regard the \(\Gamma_j\) as subgraphs of its domain factors. Restrict that map to \(\Gamma_1\times\Gamma_2\), obtaining a product subcomplex \(\cubmap{f}:\Gamma_1\times\Gamma_2\to Z\). Its compatible elevation \(\cubmapu{f}\) has image exactly \(\overline{K}\), since its coordinate fibers through \(x\) are the fibers just described.
Remove the hanging trees of each \(\Gamma_j\), obtaining its leafless core \(\Gamma_j^\circ\). Since \(\overline{K}\) contains a flat, these cores are nonempty, and their inclusions induce isomorphisms on fundamental groups. The preimage \(\univ_{\Gamma_j}^{-1}(\Gamma_j^\circ)\) is therefore connected. Restricting \(\cubmap{f}\) to \(\Gamma_1^\circ\times\Gamma_2^\circ\) gives \(\cubmap{m}_{\overline{K}}\), with a compatible elevation whose image is \(\Core{\overline{K}}\): no line enters a finite hanging tree, while each point of a leafless tree lies on a line. Hanging-tree depths are bounded by the number of edges of \(Z\), giving \(b_Z\).
\end{proof}

We call \(\Core{\overline{K}}\) the \emph{core} of \(\overline{K}\), and a representative \(\cubmap{u}_{\overline{K}}\) supplied by \Cref{lem:geometriccores} a \emph{core map} of \(\overline{K}\). Core images are monotone: \(\overline{K}\subseteq\overline{K}'\) implies \(\Core{\overline{K}}\subseteq\Core{\overline{K}'}\). Equivalently, their core maps factor as \(\cubmap{u}_{\overline{K}}\preceq\cubmap{u}_{\overline{K}'}\).

\subsection{Completion in weakly special square complexes}\label{sec:completion}
We now use the weakly special hypothesis on \(Y\). The following construction extends every product subcomplex of \(\widetilde Y\) to an elevation without changing its projected image. Together with \Cref{lem:factorization}, it gives the correspondence between maximal product subcomplexes downstairs and deck orbits of maximal product subcomplexes upstairs.

\begin{lemma}[Completion]\label{lem:completion}
Let \(\cubmap{u}:T_1\times T_2\to\widetilde Y\) be a product subcomplex. There is a product subcomplex \(\cubmap{f}:\Gamma_1\times\Gamma_2\to Y\) with an elevation \(\widehat{\cubmap{u}}\) such that
\[\cubmap{u}\preceq\widehat{\cubmap{u}}\quad\text{and}\quad\univ_Y(\im{\cubmap{u}})=\im{\cubmap{f}}=\univ_Y(\im{\widehat{\cubmap{u}}}).\]
If \(\cubmap{u}\) is standard, then so are \(\cubmap{f}\) and \(\widehat{\cubmap{u}}\). If \(\cubmap{u}\) is already an elevation of a product subcomplex, then \(\cubmap{u}\sim\widehat{\cubmap{u}}\).
\end{lemma}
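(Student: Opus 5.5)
We project the coordinate fibers of \(\cubmap{u}\) to \(Y\) and take products of the projected graphs. Fix a base vertex \(x=\cubmap{u}(t_1,t_2)\) and write \(\widetilde F_1=\cubmap{u}(T_1\times\{t_2\})\) and \(\widetilde F_2=\cubmap{u}(\{t_1\}\times T_2)\). These are convex subtrees of \(\widetilde Y\). Put \(\Gamma_j=\univ_Y(\widetilde F_j)\), a connected subgraph of \(Y\).

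\emph{Step 1: every edge of each projected tree crosses the same hyperplanes.} By \Cref{lem:productrestriction}, all edges of \(\im{\cubmap{u}}\) parallel to the first factor are dual to hyperplanes of \(\widetilde Y\) crossing every second-factor fiber. Their images in \(Y\) are hyperplanes crossing every edge of \(\Gamma_2\) in a square. Two-sidedness and the absence of self-intersection give each edge a well-defined label in this parallelism system. The absence of self-osculation gives injectivity of labels at each vertex. Together these show that at every vertex of \(\Gamma_1\), the edges of \(\Gamma_1\) and the edges of \(\Gamma_2\) through the corresponding vertex span squares of \(Y\).

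\emph{Step 2: build the product map.} Define \(\cubmap{f}:\Gamma_1\times\Gamma_2\to Y\) by \(\cubmap{f}(a,b)=\) the endpoint of transporting \(b\) along the path from \(\univ_Y(x)\) to \(a\). Transport uses the hyperplane carriers: the hyperplane dual to an edge of \(\Gamma_1\) determines, via its carrier, a unique parallel copy of each edge of \(\Gamma_2\).

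Well-definedness is the main obstacle, since transport around a loop in \(\Gamma_1\) could return a different copy of \(\Gamma_2\). I would handle it as follows. Points of \(\Gamma_1\times\Gamma_2\) correspond to pairs of hyperplane-carrier data. I would use the weakly special condition, specifically the absence of self-osculation at a vertex, to show that the edge at a given vertex dual to a given hyperplane is unique. Combining this with the product structure of \(\cubmap{u}\) then shows that the images of loops in \(\Gamma_1\) come from deck elements preserving the fiber structure.

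If direct transport does not work, the fallback is to define \(\Gamma_j\) abstractly. I would take \(\Gamma_1\) to be the quotient of \(T_1\) by the relation of having the same image under \(\univ_Y\circ\cubmap{u}(\cdot,t_2)\), and symmetrically for \(\Gamma_2\). The task is then to show that \(\univ_Y\circ\cubmap{u}\) factors through \(\Gamma_1\times\Gamma_2\). This amounts to showing that if \(\univ_Y\cubmap{u}(s,t_2)=\univ_Y\cubmap{u}(s',t_2)\), then \(\univ_Y\cubmap{u}(s,t)=\univ_Y\cubmap{u}(s',t)\) for all \(t\). I would prove this by induction on the distance from \(t\) to \(t_2\), using weak specialness at each step. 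The squares over an edge \(e\) at the two points share the image of \(e\) together with a hyperplane type. The non-osculation condition then forces the parallel edges to coincide.

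\emph{Step 3: verify the remaining properties.} \(\cubmap{f}\) is a cubical immersion, because vertex links embed by non-self-osculation. Hence it is a local isometry by \Cref{lem:productrestriction}. Its coordinate fibers are the embedded subgraphs \(\Gamma_j\) and their translates, which are embedded by the same uniqueness argument.

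The image of \(\cubmap{f}\) equals \(\univ_Y(\im{\cubmap{u}})\): every product vertex arises by transport, which is realized inside \(\im{\cubmap{u}}\). Lifting \(\cubmap{f}\) at \(x\) gives an elevation \(\widehat{\cubmap{u}}\) whose fibers through \(x\) contain \(\widetilde F_j\). Therefore \(\im{\cubmap{u}}\subseteq\im{\widehat{\cubmap{u}}}\), and \Cref{lem:productrestriction} gives \(\cubmap{u}\preceq\widehat{\cubmap{u}}\). The projected image of \(\widehat{\cubmap{u}}\) is \(\im{\cubmap{f}}\).

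\emph{Step 4: standardness and the elevation case.} If \(\cubmap{u}\) is standard, then the \(T_j\) are leafless. Their images \(\Gamma_j\) are then leafless, since a leaf of \(\Gamma_j\) would lift to a leaf of \(T_j\). The elevation \(\widehat{\cubmap{u}}\) has universal-cover tree factors, which are therefore leafless as well. If \(\cubmap{u}\) is itself an elevation of some \(\cubmap{g}\), then \(\Gamma_j\) equals the image of the fiber of \(\cubmap{g}\). Thus the elevation of \(\cubmap{f}\) at \(x\) has the same fibers as \(\cubmap{u}\), so the images are equal and \Cref{lem:factorization} gives \(\cubmap{u}\sim\widehat{\cubmap{u}}\).
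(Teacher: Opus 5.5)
Your proposal is correct and follows essentially the paper's route: take \(\Gamma_j\) to be the projections of the coordinate fibers through a fixed vertex, show that \(\univ_Y\circ\cubmap{u}\) factors through \(\Gamma_1\times\Gamma_2\), lift to an elevation, and get leaflessness from local injectivity of \(T_j\to\Gamma_j\). The paper cites the construction in \cite[Lemma~2.8]{Oh22} for the factorization and obtains \(\cubmap{u}\preceq\widehat{\cubmap{u}}\) by lifting the immersions \(T_j\to\Gamma_j\) to embeddings \(T_j\hookrightarrow\widetilde{\Gamma_j}\). Your fallback induction instead proves the factorization directly: same vertex and same hyperplane give the same edge by non-self-osculation, with two-sidedness fixing the direction through loop edges.
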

\begin{proof}
By the construction in the proof of \cite[Lemma~2.8]{Oh22}, \(\univ_Y\circ\cubmap{u}\) factors through a product subcomplex \(\cubmap{f}:\Gamma_1\times\Gamma_2\to Y\), whose factors are the images under \(\univ_Y\circ\cubmap{u}\) of the coordinate fibers through a fixed vertex of \(T_1\times T_2\).
The coordinate maps \(T_j\to\Gamma_j\) are immersions, so they lift to embeddings \(T_j\hookrightarrow\widetilde{\Gamma_j}\). Thus \(\cubmap{u}\preceq\widehat{\cubmap{u}}\) for a compatible elevation \(\widehat{\cubmap{u}}\) of \(\cubmap{f}\). Both images project onto \(\im{\cubmap{f}}\), giving the asserted equality.

The graphs \(\Gamma_j\) are finite by compactness of \(Y\). If the \(T_j\) are leafless, then so are the \(\Gamma_j\), by local injectivity, so both \(\cubmap{f}\) and \(\widehat{\cubmap{u}}\) are standard.
If \(\cubmap{u}\) is already an elevation, the construction recovers its downstairs map up to equivalence, and hence \(\cubmap{u}\sim\widehat{\cubmap{u}}\).
\end{proof}

\begin{proposition}\label{prop:maporbits}
Product subcomplexes of \(Y\) correspond bijectively to deck orbits of equivalence classes of elevations. The action is by postcomposition:
\[\gamma\cdot[\cubmapu{f}]=[\gamma\circ\cubmapu{f}],\qquad\gamma\in\pi_1(Y).\]
A standard product subcomplex of \(Y\) is maximal if and only if each of its elevations is maximal in \(\widetilde Y\). Every maximal product subcomplex of \(\widetilde Y\) is an elevation of a maximal product subcomplex of \(Y\), up to equivalence.
\end{proposition}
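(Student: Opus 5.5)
The plan has three parts: build the correspondence, check that it respects maximality, and then read off the last assertion.

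\emph{The correspondence.} Send a product subcomplex \(\cubmap{f}\) of \(Y\) to the set of equivalence classes of its elevations.

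First, this set is a single deck orbit. Any two elevations of \(\cubmap{f}\) differ by postcomposition with a deck transformation, since lifts of \(\cubmap{f}\circ\univ_{\Gamma_1\times\Gamma_2}\) through \(\univ_Y\) are unique once a basepoint is fixed. The deck group preserves \(\sim\), because postcomposition by an isometry respects factorization through product isomorphisms.

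Second, the map is well defined on classes. If \(\cubmap{f}\sim\cubmap{f}'\), say \(\cubmap{f}=\cubmap{f}'\circ\cubmap{h}\) with \(\cubmap{h}\) a product isomorphism, then \(\cubmap{h}\) lifts to a product isomorphism \(\cubmap{h}'\) between the universal covers. Hence \(\cubmapu{f}'\circ\cubmap{h}'\) is an elevation of \(\cubmap{f}\), and the two orbits coincide.

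Third, injectivity. Suppose elevations \(\cubmapu{f}\) and \(\cubmapu{g}\) of \(\cubmap{f}\) and \(\cubmap{g}\) are equivalent. In \(\widetilde Y\), which is \(\CAT\), this means their images are equal. \Cref{lem:factorization} then yields \(\cubmap{f}=\cubmap{g}\circ\cubmap{h}\) with \(\cubmap{h}\) a product isomorphism, so \(\cubmap{f}\sim\cubmap{g}\).

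Surjectivity holds by construction, since every orbit of elevation classes comes from some product subcomplex of \(Y\).

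\emph{Maximality.} Let \(\cubmap{f}\) be a standard product subcomplex of \(Y\).

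Suppose some elevation \(\cubmapu{f}\) is not maximal. By \Cref{lem:maximalproducts} there is a standard \(\cubmap{u}\) in \(\widetilde Y\) with \(\im{\cubmapu{f}}\subsetneq\im{\cubmap{u}}\). Apply \Cref{lem:completion} to \(\cubmap{u}\) to get a standard \(\cubmap{g}\) of \(Y\) with elevation \(\widehat{\cubmap{u}}\supseteq\im{\cubmap{u}}\supsetneq\im{\cubmapu{f}}\). Then \Cref{lem:factorization} gives \(\cubmap{f}\preceq\cubmap{g}\), and the inclusion is not an equality of elevation images, so the factorization is not an isomorphism. Thus \(\cubmap{f}\) is not maximal.

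Conversely, suppose \(\cubmap{f}\preceq\cubmap{g}\) with \(\cubmap{g}\) standard and \(\cubmap{f}\not\sim\cubmap{g}\). \Cref{lem:factorization} gives elevations with \(\im{\cubmapu{f}}\subseteq\im{\cubmapu{g}}\). The images are not equal, since equality would force \(\cubmap{h}\) to be an isomorphism. So \(\cubmapu{f}\) is not maximal. Because the deck group acts transitively on elevations, one elevation being maximal is equivalent to all of them being maximal.

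\emph{The last assertion.} Let \(\cubmap{u}\) be maximal in \(\widetilde Y\). \Cref{lem:completion} gives a standard \(\cubmap{f}\) and an elevation \(\widehat{\cubmap{u}}\) with \(\cubmap{u}\preceq\widehat{\cubmap{u}}\). Maximality of \(\cubmap{u}\) forces \(\cubmap{u}\sim\widehat{\cubmap{u}}\). Then every elevation of \(\cubmap{f}\) is maximal, and the previous step shows \(\cubmap{f}\) is maximal.

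The main obstacle I expect is the injectivity step. There I must make sure that equivalence of elevations, which in the \(\CAT\) space means equality of images, is exactly what \Cref{lem:factorization} needs. I must also make sure the resulting \(\cubmap{h}\) respects the interchange-of-factors convention, so that no hidden identification of factors causes two inequivalent downstairs maps to produce the same elevation class.
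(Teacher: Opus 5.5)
Your proposal is correct and takes essentially the paper's route. The bijection uses deck transitivity on elevations together with \Cref{lem:factorization}. The maximality equivalence combines \Cref{lem:completion} with \Cref{lem:factorization}, and your first direction is just the contrapositive of the paper's. The last assertion follows from completion plus maximality. One step is left implicit, as it is in the paper: a factorization \(\cubmap{f}=\cubmap{g}\circ\cubmap{h}\) with \(\cubmap{h}\) not a product isomorphism really forces \(\cubmap{f}\not\sim\cubmap{g}\). This holds because the graph factors downstairs are finite, as noted after the definition of \(\sim\).
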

\begin{proof}
Elevations of a fixed map differ by postcomposition with deck transformations. Conversely, equivalent elevations descend to equivalent maps by \Cref{lem:factorization}, proving the first assertion.

Let \(\cubmap{f}\) be maximal in \(Y\), and suppose an elevation \(\cubmapu{f}\) factors through a standard product subcomplex \(\cubmap{u}\) of \(\widetilde Y\). By \Cref{lem:completion}, \(\cubmap{u}\) factors through an elevation \(\cubmapu{g}\) of a standard product subcomplex \(\cubmap{g}\) of \(Y\). By \Cref{lem:factorization}, \(\cubmap{f}\preceq\cubmap{g}\). Maximality of \(\cubmap{f}\) forces the images of \(\cubmapu{f}\) and \(\cubmapu{g}\) to agree, and hence \(\cubmap{u}\sim\cubmapu{f}\).
Conversely, a proper standard extension of \(\cubmap{f}\) gives a proper standard extension of a compatible elevation, again by \Cref{lem:factorization}. Finally, a maximal product subcomplex upstairs is equivalent to its completion, and its downstairs map is maximal by the preceding equivalence.
\end{proof}

\section{Intersection complexes and the deck quotient}\label{sec:intersectioncomplexes}

Throughout this section, \(X\) and \(X_i\) denote square complexes that are either compact or \(\CAT\). We use the product subcomplexes, standardness and maximality of \Cref{def:products,def:mapmaximal}. The definition and the action of cubical automorphisms apply in this generality. We use the weakly special hypothesis when identifying the intersection complex of \(Y\) with the deck quotient of that of \(\widetilde Y\).

\subsection{Common-factor diagrams}\label{sec:common-factor diagrams}

Fix maximal product subcomplexes \(\cubmap{m}_i:P_i\to X\), where \(P_i=\Gamma_{i,1}\times\Gamma_{i,2}\), for \(0\le i\le k\), allowing repeated classes. A \emph{common-factor diagram} consists of a standard product subcomplex \(\cubmap{f}:Q=\Lambda_1\times \Lambda_2\to X\) and product embeddings \(\cubmap{j}_i:Q\hookrightarrow P_i\) such that \(\cubmap{m}_i\circ \cubmap{j}_i=\cubmap{f}\) for every \(i\). For two indices the diagram is
\begin{equation}\label{eq:commutingproducts}
\begin{tikzcd}[column sep=large,row sep=large]
\Lambda_1\times \Lambda_2 \arrow[r,hook,"\cubmap{j}_0"] \arrow[d,hook,"\cubmap{j}_1"'] & P_0 \arrow[d,"\cubmap{m}_0"] \\
P_1 \arrow[r,"\cubmap{m}_1"'] & X .
\end{tikzcd}
\end{equation}
An \emph{extension} is a common-factor diagram with the same vertex maps and common domain \(Q'\), together with a product embedding \(\cubmap{h}:Q\hookrightarrow Q'\) such that \(\cubmap{j}_i=\cubmap{j}_i'\circ\cubmap{h}\) for every \(i\). The diagram is \emph{maximal} if every extension has \(\cubmap{h}\) a product isomorphism. This maximality keeps the entire family of arrows \(\cubmap{j}_i\); it is not maximality of \(\cubmap{f}\) among all standard product subcomplexes of \(X\).

Two diagrams are \emph{isomorphic} if there are an index permutation \(\pi\) and product isomorphisms \(\cubmap{t}:Q\to Q'\) and \(\cubmap{s}_i:P_i\to P'_{\pi(i)}\) such that
\[\cubmap{f}'\circ \cubmap{t}=\cubmap{f},\qquad \cubmap{m}'_{\pi(i)}\circ \cubmap{s}_i=\cubmap{m}_i, \qquad \cubmap{s}_i\circ \cubmap{j}_i=\cubmap{j}'_{\pi(i)}\circ \cubmap{t}.\]
A diagram is \emph{nondegenerate} if, for \(i\ne j\), there is no product isomorphism \(\cubmap{s}:P_i\to P_j\) satisfying both \(\cubmap{m}_j\circ \cubmap{s}=\cubmap{m}_i\) and \(\cubmap{s}\circ \cubmap{j}_i=\cubmap{j}_j\).

\begin{lemma}[Maximal diagrams]\label{lem:commutingdiagrams}
Every common-factor diagram has a unique maximal extension, up to isomorphism over the original diagram.
If \(X\) is \(\CAT\), nondegenerate maximal diagrams with \(k+1\) arrows correspond to collections of \(k+1\) distinct maximal product subcomplexes whose images contain a common flat. Their common maps are the core maps of the corresponding intersections.
For \(X=Y\), such diagrams correspond to \(\pi_1(Y)\)-orbits of these collections in \(\widetilde Y\). Diagram automorphisms induce exactly the permutations induced by the corresponding setwise stabilizers.
\end{lemma}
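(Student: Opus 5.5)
We prove the four assertions in turn: existence and uniqueness of maximal extensions, the \(\CAT\) description, the deck-orbit description for \(X=Y\), and the statement about automorphisms. Throughout, we reduce to the universal cover where possible, using \Cref{lem:factorization} to move between factorizations downstairs and image inclusions upstairs.

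\emph{Existence and uniqueness.} First let \(X\) be \(\CAT\). By \Cref{lem:productrestriction} and the discussion before \Cref{def:products}, every \(\cubmap{m}_i\) is an isometric embedding of a product of trees. Hence a common-factor diagram is determined by the image \(\im{\cubmap{f}}\), a union of flats inside \(\overline{K}=\bigcap_i\im{\cubmap{m}_i}\), once the arrows \(\cubmap{j}_i=\cubmap{m}_i^{-1}\circ\cubmap{f}\) are fixed. An extension is then just a standard product subcomplex with larger image inside \(\overline{K}\). By \Cref{lem:geometriccores}, every such diagram factors uniquely through the core map \(\cubmap{u}_{\overline{K}}\). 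So the diagram with common map \(\cubmap{u}_{\overline{K}}\) is the unique maximal extension, unique up to isomorphism over the original diagram.

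For compact \(X\), choose an elevation \(\cubmapu{m}_0\) and elevate \(\cubmap{f}\) compatibly through \(\cubmap{j}_0\). Lifting each \(\cubmap{j}_i\) then fixes elevations \(\cubmapu{m}_i\) with \(\cubmapu{f}=\cubmapu{m}_i\circ\widetilde{\cubmap{j}}_i\). Apply the second half of \Cref{lem:geometriccores} to \(\overline{K}=\bigcap_i\im{\cubmapu{m}_i}\). This gives a standard \(\cubmap{m}_{\overline{K}}\) with an elevation whose image is \(\Core{\overline{K}}\supseteq\im{\cubmapu{f}}\). By \Cref{lem:factorization}, the inclusions of elevation images descend to product embeddings \(Q\hookrightarrow Q_{\overline K}\hookrightarrow P_i\) compatible with the \(\cubmap{j}_i\).

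For maximality, any further extension \(\cubmap{h}:Q\hookrightarrow Q'\) lifts to an elevation lying in every \(\im{\cubmapu{m}_i}\). Its image therefore lies in \(\Core{\overline{K}}\), and the moreover part of \Cref{lem:factorization} identifies the diagrams. Uniqueness uses the same argument: the compatible elevations are determined up to a common deck transformation by the arrows, so \(\overline{K}\) is determined up to that transformation.

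\emph{The \(\CAT\) correspondence.} Send a maximal diagram to its collection \(\{\cubmap{m}_i\}\). The core image contains a flat because \(Q\) is a product of leafless trees. Nondegeneracy means no two \(\cubmap{m}_i,\cubmap{m}_j\) are equivalent through an isomorphism compatible with the arrows. Upstairs, all maps are embeddings, so this says exactly that the images are distinct, hence the classes are distinct. Conversely, a collection of distinct classes whose images contain a common flat gives the maximal diagram over its core, by the existence step.

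\emph{The case \(X=Y\).} A maximal diagram in \(Y\), together with a choice of elevation of \(\cubmap{f}\), determines the tuple \((\cubmapu{m}_i)\). By \Cref{prop:maporbits}, each \(\cubmapu{m}_i\) is maximal in \(\widetilde Y\), and the images of the \(\cubmapu{m}_i\) contain the flats of \(\im{\cubmapu{f}}\). Changing the elevation postcomposes every \(\cubmapu{m}_i\) with one common deck element. This gives a well-defined map to \(\pi_1(Y)\)-orbits.

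\begin{itemize}[leftmargin=*]
\item \emph{Inverse.} Given a collection upstairs, project its core map and its elevations.
\item \emph{Nondegeneracy.} Upstairs distinctness corresponds to downstairs nondegeneracy by \Cref{lem:factorization}. An isomorphism \(\cubmap{s}\) with \(\cubmap{s}\circ\cubmap{j}_i=\cubmap{j}_j\) lifts to an identification of \(\cubmapu{m}_i\) with \(\cubmapu{m}_j\) for the same elevation of \(\cubmap{f}\).
\item \emph{Automorphisms.} A diagram automorphism \((\pi,\cubmap{t},\cubmap{s}_i)\) lifts \(\cubmap{t}\) to a deck element \(\gamma\) with \(\gamma\cubmapu{f}\sim\cubmapu{f}\circ\widetilde{\cubmap{t}}\), and \(\gamma\) carries \(\cubmapu{m}_i\) to \(\cubmapu{m}_{\pi(i)}\). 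Conversely, an element of the setwise stabilizer of the collection fixes \(\overline{K}\), hence fixes its core. Descending it gives such a triple, and it induces the same permutation \(\pi\).
\end{itemize}

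The main obstacle is this last bookkeeping. One must check that lifts are chosen consistently, so that "isomorphism over the original diagram" and "permutation induced by the stabilizer" coincide. In particular, the stabilizer is the setwise stabilizer of the collection, not of each \(\im{\cubmapu{m}_i}\) individually.
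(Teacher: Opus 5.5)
Your proposal is correct and follows essentially the same route as the paper. You treat the \(\CAT\) case through the core of \Cref{lem:geometriccores}. In the compact case you choose elevations compatible with the arrows and descend the core inclusions with \Cref{lem:factorization}. For the orbit, nondegeneracy and stabilizer assertions you use \Cref{prop:maporbits} and uniqueness of lifts to match deck transformations with diagram isomorphisms.

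One step is only implicit, as it is in the paper: the descended factorization of an extension must commute with the arrows. This should be checked on the elevations \(\cubmapu{m}_i\), which are embeddings, rather than downstairs, where the \(\cubmap{m}_i\) need not be injective.
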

\begin{proof}
First suppose that \(X\) is \(\CAT\). Product subcomplexes of \(X\) are represented by embeddings, so the arrows are the unique factorizations of their image inclusions. By \Cref{lem:geometriccores}, the core of the common intersection gives the unique maximal extension. Nondegeneracy says precisely that the vertex maps represent distinct equivalence classes. This proves the assertion in this case.

Now suppose that \(X\) is compact. Choose an elevation \(\cubmapu{f}\) of the common map \(\cubmap{f}\) and, using the arrows \(\cubmap{j}_i\), compatible elevations \(\cubmapu{m}_i\) of the vertex maps \(\cubmap{m}_i\). The intersection \(\bigcap_i\im{\cubmapu{m}_i}\) contains \(\im{\cubmapu{f}}\), hence a flat. The second assertion of \Cref{lem:geometriccores} gives a standard product subcomplex \(\cubmap{f}_*\) of \(X\) whose compatible elevation has this intersection core as its image. The specified-elevation assertion of \Cref{lem:factorization} descends the inclusions into the \(\im{\cubmapu{m}_i}\) to arrows extending the original diagram.

Every other extension lifts compatibly into the same \(\im{\cubmapu{m}_i}\). Its image is a union of flats, hence lies in their intersection core. Applying \Cref{lem:factorization} again gives a factorization through the diagram of \(\cubmap{f}_*\), commuting with every arrow. This factorization is unique because any one of the arrows into \(P_i\) is an embedding. Thus the core diagram is the unique maximal extension. This part of the argument uses only that the \(\cubmap{m}_i\) are given downstairs; their elevations need not be maximal in \(\widetilde X\).

For the orbit assertion, take \(X=Y\). By \Cref{prop:maporbits}, maximal product subcomplexes of \(\widetilde Y\) are precisely elevations of maximal product subcomplexes of \(Y\), up to equivalence. The compatible images \(\im{\cubmapu{m}_i}\) and \(\im{\cubmapu{m}_j}\) coincide precisely when their equality descends to a product isomorphism \(\cubmap{s}\) with \(\cubmap{m}_j\circ\cubmap{s}=\cubmap{m}_i\) and \(\cubmap{s}\circ\cubmap{j}_i=\cubmap{j}_j\). Thus nondegeneracy is exactly distinctness of the maximal product subcomplexes upstairs.
A diagram isomorphism determines a deck transformation matching the common core elevations under the lifted common-domain isomorphism. Uniqueness of lifts then shows that the same deck transformation matches all the compatible vertex-map elevation images. Conversely, a simultaneous deck transformation induces product isomorphisms between the corresponding core and vertex-map domains; \Cref{lem:factorization} descends these to commuting diagram isomorphisms. This proves the orbit correspondence and the assertion about stabilizers.
\end{proof}

\begin{definition}[Intersection complex]\label{def:mapreduced}
The \emph{intersection complex} \(\cI(X)\) is defined by the following labeled simplex and incidence data:
\begin{itemize}
\item
The vertices of \(\cI(X)\) are maximal product subcomplexes of \(X\). Its \(k\)-simplex data are the nondegenerate maximal common-factor diagrams with \(k+1\) arrows, up to the isomorphisms above. 
\item
For each nonempty subset of indices, forget the other arrows and take the unique maximal extension. This defines a face incidence; retain every such incidence and the index permutations induced by diagram automorphisms. The realization \(|\cI(X)|\) is obtained by gluing the indexed standard simplices using these face incidences and diagram isomorphisms.
\item
The \emph{core-map label} of a diagram \(\sigma\) is
\[\Lb{\sigma}=[\cubmap{f}_\sigma],\]
where \(\cubmap{f}_\sigma:\Lambda_{\sigma,1}\times\Lambda_{\sigma,2}\to X\) is its common standard product subcomplex. A face incidence \(\tau\subset\sigma\) carries the product embedding \(\cubmap{h}_{\sigma\tau}\) of common domains with \(\cubmap{f}_\sigma=\cubmap{f}_\tau\circ \cubmap{h}_{\sigma\tau}\).
These factorizations compose along face chains, up to isomorphism of diagrams.
\end{itemize}
\end{definition}

The face construction is well defined by \Cref{lem:commutingdiagrams}: after choosing compatible elevations, forgetting arrows enlarges the common intersection and taking the maximal extension takes its core. Nondegeneracy is preserved, since an isomorphism identifying two extended arrows would also identify their restrictions. Core monotonicity gives the stated composition along face chains. A diagram with one arrow is maximal exactly when that arrow is a product isomorphism, so its class is the stated vertex \([\cubmap{m}_i]\).

In particular, two distinct vertices \([\cubmap{m}_0],[\cubmap{m}_1]\) are joined if and only if they admit a common standard lower bound, equivalently a diagram \eqref{eq:commutingproducts}. The existence condition alone does not record multiple edges. For a loop, the two maximal classes agree but their arrows must satisfy the nondegeneracy condition; the diagram with two identical arrows contributes no edge. Distinct diagrams and face occurrences are not identified merely because their vertex classes agree. Diagram automorphisms also retain any simplex inversions.

When \(X\) is compact, \(\cI(X)\) is finite. Indeed, each graph factor embeds in the finite complex \(X\), so there are only finitely many product subcomplexes and product embeddings between their domains, up to isomorphism. For each possible common map, nondegeneracy forbids repetitions of an arrow to a vertex map up to isomorphism. Hence there are only finitely many simplex diagrams and face incidences.

\begin{proposition}[The \(\CAT\) case; cf.~{\cite[Definition~3.5]{Oh22}}]\label{def:geometricic}
For a \(\CAT\) square complex \(X\), the intersection complex \(\cI(X)\) is a labeled simplicial complex with the following description:
\begin{itemize}
\item Vertices are the maximal product subcomplexes of \(X\), and distinct vertices \([\cubmap{m}_0],\ldots,[\cubmap{m}_k]\) span a simplex \(\sigma\) if and only if \(\bigcap_i\im{\cubmap{m}_i}\) contains a flat.
\item
The \emph{core-map label} of \(\sigma\) is
\[\Lb{\sigma}=[\cubmap{m}_\sigma],\qquad \im{\cubmap{m}_\sigma}=\Core{\bigcap_i\im{\cubmap{m}_i}},\]
where \(\cubmap{m}_\sigma\) represents the standard product subcomplex provided by \Cref{lem:geometriccores}.
\item
Each face inclusion \(\tau\subset\sigma\) carries a factorization \(\cubmap{m}_\sigma=\cubmap{m}_\tau\circ\cubmap{j}_{\sigma\tau}\), defined up to equivalence of representatives. These factorizations are compatible with composition along face chains.
\end{itemize}
\end{proposition}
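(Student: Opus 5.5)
The plan is to specialize \Cref{def:mapreduced} to the \(\CAT\) case and to read off each item from \Cref{lem:commutingdiagrams,lem:geometriccores}.

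\textbf{Vertices and simplices.} Vertices are maximal product subcomplexes by definition. By \Cref{lem:maximalproducts} they are represented by embeddings whose images are inclusion-maximal products of leafless trees. For \(k\)-simplices, \Cref{lem:commutingdiagrams} says that nondegenerate maximal diagrams with \(k+1\) arrows correspond to collections of \(k+1\) distinct maximal product subcomplexes whose images contain a common flat. I would check that isomorphism classes of diagrams match unordered collections. An isomorphism of diagrams consists of a permutation together with product isomorphisms \(\cubmap{s}_i\) and \(\cubmap{t}\) commuting with the maps into \(X\). Since all maps embed, these isomorphisms are forced by the images, so they are unique once the permutation is given. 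Consequently two diagrams with the same vertex set are isomorphic.

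\textbf{Simplicial structure.} Next I would show that \(\cI(X)\) is an honest simplicial complex, with no multiple simplices on a vertex set and no nontrivial symmetries or loops. Loops are excluded because nondegeneracy forces the vertices to be distinct. Multiple simplices are excluded by the uniqueness just noted: a simplex is determined by its vertex set. A diagram automorphism inducing a permutation \(\pi\) would identify \(\cubmap{m}_i\) with \(\cubmap{m}_{\pi(i)}\) compatibly with the arrows. Distinctness of the vertex classes then forces \(\pi=\mathrm{id}\), so no simplex inversions are retained. Faces come from restricting to a subfamily and taking the maximal extension. Again by \Cref{lem:commutingdiagrams}, this yields the simplex on the corresponding vertex subset. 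The subset still has a common flat, since intersections only grow when arrows are forgotten. Hence the face incidences are the usual simplicial ones, and the realization is the realization of the abstract simplicial complex.

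\textbf{Labels and factorizations.} For the labels, \Cref{lem:commutingdiagrams} identifies the common map of the maximal diagram with the core map of \(\bigcap_i\im{\cubmap{m}_i}\). Its image is \(\Core{\bigcap_i\im{\cubmap{m}_i}}\) by \Cref{lem:geometriccores}. For \(\tau\subset\sigma\), the intersection for \(\tau\) contains the one for \(\sigma\), so core monotonicity gives \(\im{\cubmap{m}_\sigma}\subseteq\im{\cubmap{m}_\tau}\). The unique factorization clause of \Cref{lem:geometriccores} then gives \(\cubmap{j}_{\sigma\tau}\) with \(\cubmap{m}_\sigma=\cubmap{m}_\tau\circ\cubmap{j}_{\sigma\tau}\). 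Compatibility along a chain \(\rho\subset\tau\subset\sigma\) follows from the same uniqueness: both \(\cubmap{j}_{\tau\rho}\circ\cubmap{j}_{\sigma\tau}\) and \(\cubmap{j}_{\sigma\rho}\) factor \(\cubmap{m}_\sigma\) through the embedding \(\cubmap{m}_\rho\), so they coincide. Changing representatives changes these maps only by the product isomorphisms realizing \(\sim\), which is why the factorizations are defined up to equivalence.

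\textbf{Main obstacle.} The step I expect to need the most care is the claim that diagram isomorphisms, and hence automorphisms, are rigid. This requires that the product isomorphisms \(\cubmap{s}_i\) and \(\cubmap{t}\) are uniquely determined, and this holds precisely because every map involved is an embedding: \(\cubmap{m}'_{\pi(i)}\circ\cubmap{s}_i=\cubmap{m}_i\) pins down \(\cubmap{s}_i\) pointwise. Without this, \(\cI(X)\) could fail to be simplicial.
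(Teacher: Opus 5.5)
Your proposal is correct and follows essentially the paper's route. The paper likewise reads the simplex description off \Cref{lem:commutingdiagrams}, uses that all maps embed to exclude nontrivial vertex permutations, and obtains the labels and face factorizations from core monotonicity, which you make explicit through the unique-factorization clause of \Cref{lem:geometriccores}. One small correction to your wording: the existence of an isomorphism between two diagrams on the same vertex set comes from the core identification in that lemma together with injectivity of the vertex maps (to check \(\cubmap{s}_i\circ\cubmap{j}_i=\cubmap{j}'_{\pi(i)}\circ\cubmap{t}\)), not from the uniqueness of the isomorphisms as your ``consequently'' suggests.
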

\begin{proof}
By \Cref{lem:commutingdiagrams}, a nondegenerate maximal diagram in \(X\) is determined by its distinct vertex maps, with common map given by the core of their image intersection. These maps embed, so such a diagram has no nontrivial vertex permutation over \(X\). The face and label assertions follow from core monotonicity.
\end{proof}

The simplex condition is equivalently the existence of a common standard product subcomplex. We use \(\cI(X)\) also for the underlying simplicial complex. When \(X=\widetilde Y\), \Cref{prop:maporbits,lem:geometriccores} allow us to represent its vertex maps and core maps by elevations, written \(\cubmapu{m}\) and \(\cubmapu{m}_\sigma\), respectively.

\subsection{Morphisms}\label{sec:icmorphisms}

A \emph{combinatorial map} between intersection complexes is a dimension-preserving map of their indexed simplex data, with bijections of the corresponding index sets that respect face incidences and diagram symmetries. In particular, different indices may represent the same vertex class. A \emph{face chain} is a sequence \(\sigma_0\subset\cdots\subset\sigma_k\) with specified face incidences.

\begin{definition}[Morphisms]\label{def:icmorphisms}
A combinatorial map \(\Phi:\cI(X_1)\to\cI(X_2)\) is a \emph{morphism} if the following condition holds for every finite face chain \(\sigma_0\subset\cdots\subset\sigma_k\).
Choose core-map representatives \(\cubmap{f}_i:P_i\to X_1\) and \(\cubmap{g}_i:Q_i\to X_2\) for \(\sigma_i\) and \(\Phi(\sigma_i)\), respectively, with their incidence factorizations
\[\cubmap{f}_{i+1}=\cubmap{f}_i\circ\cubmap{j}_i,
\qquad
\cubmap{g}_{i+1}=\cubmap{g}_i\circ\cubmap{k}_i.\]
Take compatible elevations, and write
\[\cubmap{j}'_i:\widetilde P_{i+1}\hookrightarrow\widetilde P_i,
\qquad
\cubmap{k}'_i:\widetilde Q_{i+1}\hookrightarrow\widetilde Q_i\]
for the lifted factorization maps between their domains. There must exist product quasi-isometries \(\psi_i:\widetilde P_i\to\widetilde Q_i\), with a consistent matching of the two factors throughout the chain, such that
\begin{equation}\label{eq:morphismcompatibility}
\sup_{x\in\widetilde P_{i+1}}\dist_{\widetilde Q_i}\bigl(\psi_i\circ\cubmap{j}'_i(x),\cubmap{k}'_i\circ\psi_{i+1}(x)\bigr)<\infty\qquad(0\le i<k).
\end{equation}
Here a product quasi-isometry is a product of quasi-isometries of the two tree factors, allowing interchange of factors. An \emph{isomorphism} is a morphism whose underlying combinatorial map is an isomorphism.
\end{definition}

Compatible elevations exist by lifting the incidence factorizations successively; when the target is simply connected, its core maps already serve as elevations. Changing representatives or compatible elevations relates the lifted domain diagrams by product isometries, so the condition is independent of these choices. The quasi-isometries and the bounds may depend on the face chain. Composing the comparison maps proves that morphisms compose, and taking product quasi-inverses shows that the inverse of an isomorphism is again a morphism. For a morphism \(\Phi\), we write \(|\Phi|\) for its induced map on realizations.

\subsection{Actions and the deck quotient}\label{sec:generalorbit}

\begin{proposition}[Actions on intersection complexes]\label{prop:intersectionaction}
A group acting on \(X\) by cubical automorphisms acts by automorphisms of \(\cI(X)\), through postcomposition on the product maps. The labels are equivariant: if \(\Lb{\sigma}=[\cubmap{f}_\sigma]\), then \(\Lb{\alpha\sigma}=[\alpha\circ\cubmap{f}_\sigma]\), and the factorization arrows are unchanged.
\end{proposition}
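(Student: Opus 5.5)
The plan is to verify, in order, that postcomposition by a cubical automorphism \(\alpha\) preserves every piece of data entering \Cref{def:mapreduced}, and then that the resulting combinatorial map is a morphism in the sense of \Cref{def:icmorphisms}.

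First, for a product subcomplex \(\cubmap{f}:\Gamma_1\times\Gamma_2\to X\), the composite \(\alpha\circ\cubmap{f}\) is again a local isometry with embedded coordinate fibers, because \(\alpha\) is a cubical isomorphism. Leaflessness concerns only the domain, so standardness is preserved. If \(\cubmap{f}=\cubmap{g}\circ\cubmap{h}\), then \(\alpha\circ\cubmap{f}=(\alpha\circ\cubmap{g})\circ\cubmap{h}\), so \(\preceq\) and \(\sim\) are preserved. Applying the same argument to \(\alpha^{-1}\) shows that \(\alpha\) induces a containment-preserving bijection on product subcomplexes. Hence it maps maximal product subcomplexes bijectively to maximal ones (\Cref{def:mapmaximal}).

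Next, consider a common-factor diagram \((\cubmap{f},\cubmap{j}_i,\cubmap{m}_i)\). The identities \(\cubmap{m}_i\circ\cubmap{j}_i=\cubmap{f}\) give \((\alpha\circ\cubmap{m}_i)\circ\cubmap{j}_i=\alpha\circ\cubmap{f}\). So \(\alpha\cdot(\cubmap{f},\cubmap{j}_i,\cubmap{m}_i)=(\alpha\circ\cubmap{f},\cubmap{j}_i,\alpha\circ\cubmap{m}_i)\) is again a common-factor diagram with the same domains and the same arrows. Extensions, diagram isomorphisms (same \(\pi,\cubmap{t},\cubmap{s}_i\)) and the nondegeneracy condition all involve equations of the form \(\cubmap{m}_j\circ\cubmap{s}=\cubmap{m}_i\) in which \(X\) appears only as the common target. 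Each such equation holds for the original diagram exactly when it holds after postcomposing with the invertible \(\alpha\). Consequently maximality, nondegeneracy and isomorphism classes are preserved, and the diagram automorphism groups are literally equal.

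The face operation is "forget arrows, then take the unique maximal extension" (\Cref{lem:commutingdiagrams}). It commutes with \(\alpha\), because the maximal extension of the translated diagram is the translate of the maximal extension, by the uniqueness just established. So \(\alpha\) gives a combinatorial map respecting face incidences and symmetries. The label assertion \(\Lb{\alpha\sigma}=[\alpha\circ\cubmap{f}_\sigma]\) holds by construction. The incidence embeddings \(\cubmap{h}_{\sigma\tau}\) are unchanged, since they live between domains. The action axioms \((\alpha\beta)\cdot\sigma=\alpha\cdot(\beta\cdot\sigma)\) and \(1\cdot\sigma=\sigma\) follow from associativity of composition.

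The main point to check is that each \(\alpha\) is an isomorphism, not merely a combinatorial bijection. Along a face chain, choose representatives \(\cubmap{f}_i\) for \(\sigma_i\) and \(\alpha\circ\cubmap{f}_i\) for \(\alpha\sigma_i\), with the same incidence factorizations \(\cubmap{j}_i\). If \(X\) is compact, lift \(\alpha\) to \(\widetilde\alpha\) on \(\widetilde X\); then \(\widetilde\alpha\circ\cubmapu{f}_i\) are compatible elevations of the \(\alpha\circ\cubmap{f}_i\), and they have the same domains \(\widetilde P_i\) and the same lifted maps \(\cubmap{j}'_i\). If \(X\) is \(\CAT\), no lifting is needed. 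In either case take \(\psi_i=\mathrm{id}_{\widetilde P_i}\). Then \eqref{eq:morphismcompatibility} holds with bound \(0\), and the factor matching is the trivial one, hence consistent. The inverse \(\alpha^{-1}\) gives the inverse morphism, so \(\alpha\) acts by automorphisms.
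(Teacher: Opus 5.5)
Your proposal is correct and takes the same approach as the paper's proof. You postcompose with \(\alpha\) while keeping the arrows, check that standardness, maximality, nondegeneracy and diagram isomorphisms are preserved, note that this commutes with forgetting arrows and taking maximal extensions, use identity maps between the unchanged lifted domains for \Cref{def:icmorphisms}, and apply the argument to \(\alpha^{-1}\). Lifting \(\alpha\) to \(\widetilde\alpha\) in the compact case just makes explicit the compatible elevations that the paper leaves implicit.
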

\begin{proof}
For a cubical automorphism \(\alpha\), replace each common map \(\cubmap{f}\) and vertex map \(\cubmap{m}_i\) by \(\alpha\circ\cubmap{f}\) and \(\alpha\circ\cubmap{m}_i\), keeping the arrows \(\cubmap{j}_i\). This preserves standardness, maximality, nondegeneracy and diagram isomorphisms. It commutes with forgetting arrows and taking maximal extensions. The core-map domains and their factorization maps are unchanged, so identity maps between the lifted domains verify \Cref{def:icmorphisms}. Applying the same argument to \(\alpha^{-1}\) proves the assertion.
\end{proof}

We now return to our weakly special square complex \(Y\). Take \(G=\pi_1(Y)\) acting on \(\widetilde Y\) by deck transformations. In the combinatorial quotient \(\cI(\widetilde Y)/G\), we retain all face incidences and the vertex permutations induced by simplex stabilizers. Its indexed face chains carry the elevated core-domain diagrams of compatible representatives upstairs, well defined up to product isometries. The definition of morphism applies to these induced diagrams as well. The following theorem identifies this quotient with the construction in \(Y\).

\begin{theorem}[The deck quotient]\label{thm:mapquotient}
The intersection complex \(\cI(Y)\) is finite, and there is a natural quotient morphism
\[\cubmap{q}:\cI(\widetilde Y)\longrightarrow\cI(Y)\cong\cI(\widetilde Y)/G,\]
where the isomorphism is canonical. Core-map labels correspond by elevation, and the lifted factorization diagrams along face chains are identified by product isometries. On realizations, this gives
\[|\cI(Y)|\cong|\cI(\widetilde Y)|/G.\]
The action on \(\sd\cI(\widetilde Y)\), where \(\sd\) denotes barycentric subdivision, has no inversions. Its quotient, with all face incidences retained, is a finite triangular complex whose realization is canonically homeomorphic to \(|\cI(Y)|\).
\end{theorem}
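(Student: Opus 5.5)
Finiteness of \(\cI(Y)\) was already observed after \Cref{def:mapreduced}, so the work lies in constructing \(\cubmap{q}\) and identifying its fibres with deck orbits. I would define \(\cubmap{q}\) on indexed simplex data by projection. A simplex \(\sigma\subset\cI(\widetilde Y)\) is a set of distinct maximal product subcomplexes \(\cubmapu{m}_0,\ldots,\cubmapu{m}_k\) whose images contain a common flat (\Cref{def:geometricic}). By \Cref{prop:maporbits}, each \(\cubmapu{m}_i\) is equivalent to an elevation of a maximal product subcomplex \(\cubmap{m}_i\) of \(Y\). The second part of \Cref{lem:geometriccores} gives a standard \(\cubmap{m}_{\overline K}\) of \(Y\) with a compatible elevation whose image is the core \(\im{\cubmapu{m}_\sigma}\). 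Applying \Cref{lem:factorization} to the inclusions \(\im{\cubmapu{m}_\sigma}\subseteq\im{\cubmapu{m}_i}\) descends them to product embeddings \(\cubmap{j}_i\) with \(\cubmap{m}_i\circ\cubmap{j}_i=\cubmap{m}_{\overline K}\). The resulting common-factor diagram is maximal and nondegenerate by the compact-case argument and the orbit assertion of \Cref{lem:commutingdiagrams}, and I set \(\cubmap{q}(\sigma)\) to be its class. Changing representatives within equivalence classes, or by a deck transformation, changes the diagram only by an isomorphism, so \(\cubmap{q}\) is well defined and \(G\)-invariant.

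Next I would show that \(\cubmap{q}\) induces a bijection \(\cI(\widetilde Y)/G\to\cI(Y)\) on indexed simplex data. Surjectivity follows by elevating a given diagram: choose an elevation of the common map and compatible elevations of the vertex maps through the arrows. The vertex elevations are maximal by \Cref{prop:maporbits} and distinct by nondegeneracy. Injectivity modulo \(G\), and the matching of diagram automorphisms with the permutations induced by setwise stabilizers, is exactly the last assertion of \Cref{lem:commutingdiagrams}. I would then check compatibility with faces. Forgetting indices upstairs enlarges the intersection, and the face is its core. Downstairs, the face is the maximal extension of the restricted diagram, and the proof of \Cref{lem:commutingdiagrams} computes that extension as the core of the same compatible intersection. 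Hence \(\cubmap{q}\) commutes with face incidences, and distinct face occurrences correspond, so the combinatorial map is an isomorphism onto \(\cI(Y)\).

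For the morphism condition, fix a face chain \(\sigma_0\subset\cdots\subset\sigma_k\) upstairs. Its core maps are themselves compatible elevations of the downstairs core maps, with \(\widetilde{P_i}\) identified with the universal cover of the downstairs domain. By \Cref{lem:factorization}, the lifted factorization maps are exactly the upstairs inclusions \(\cubmap{j}'_i\). I would therefore take every \(\psi_i\) to be the identifying product isometry, so that \eqref{eq:morphismcompatibility} holds with bound \(0\). The inverse map is handled the same way, with inverse isometries. The same identification shows that core-map labels correspond by elevation.

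For realizations, I would pass to \(\sd\cI(\widetilde Y)\). Its vertices are simplices of \(\cI(\widetilde Y)\), and \(G\) preserves dimension, so any element stabilizing a simplex of \(\sd\) fixes each of its vertices and the action has no inversions. The quotient of a simplicial complex by an inversion-free action, retaining incidences, is then a triangular complex; it is finite because \(\cI(Y)\) is finite. The comparison with \(|\cI(Y)|\) proceeds simplex by simplex. The closed cell of \(\cubmap{q}(\sigma)\) in \(|\cI(Y)|\) is the standard simplex modulo the permutations from diagram automorphisms, which equal the permutations from \(\Stab_G(\sigma)\). This agrees with the image of \(|\sigma|\) in \(|\cI(\widetilde Y)|/G\), and face gluings agree by the previous step, which gives the canonical homeomorphism. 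I expect the main obstacle to be this bookkeeping. Repeated vertex classes and loop edges force the comparison to be made with indexed data rather than vertex sets. I would handle it by working throughout with chains in \(\sd\), where each quotient cell is an honest simplex, and then checking that the stabilizer permutations are exactly what \(\sd\) of \(\cI(Y)\)'s gluing collapses.
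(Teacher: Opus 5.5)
Your proposal is correct and follows essentially the same route as the paper: it uses the orbit and stabilizer correspondence of \Cref{lem:commutingdiagrams}, face compatibility via cores of enlarged intersections, the morphism condition from \Cref{lem:factorization} with the identifying product isometries giving bound zero, and barycentric subdivision, whose simplices have vertices of distinct dimensions. One thing to state explicitly: the same dimension argument shows that no two vertices of a simplex of \(\sd\cI(\widetilde Y)\) lie in one orbit, and it is this, together with the absence of inversions, that makes the closed quotient cells embedded.
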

\begin{proof}
By \Cref{lem:commutingdiagrams}, maximal diagrams give precisely the simultaneous elevation orbits, with the same stabilizer permutations. Forgetting arrows and extending corresponds to taking a face of an upstairs simplex and then its core. Thus the natural map identifies all simplex, incidence and symmetry data of \(\cI(Y)\) with those of \(\cI(\widetilde Y)/G\).

For a face chain upstairs, choose compatible elevations of the corresponding downstairs core maps. By \Cref{lem:factorization}, their lifted factorization diagram is the original upstairs diagram, up to product isometries. These isometries make \eqref{eq:morphismcompatibility} commute exactly, proving that \(\cubmap{q}\) is a morphism. The same identifications give the morphism condition in both directions for the induced combinatorial isomorphism. Taking realizations gives the asserted homeomorphism.

Finiteness of \(\cI(Y)\) follows from compactness, as observed after \Cref{def:mapreduced}.

A simplex of the barycentric subdivision is a strictly increasing chain of original simplices. Its vertices have distinct dimensions. Any element stabilizing this chain fixes every one of its vertices; moreover, no two vertices of this simplex lie in the same orbit. The quotient therefore has embedded closed simplex cells, with their face incidences retained.
\end{proof}

Before subdivision, the realization \(|\cubmap{q}|\) need not be injective on a closed simplex; \Cref{prop:loop} gives a loop edge.

\begin{remark}\label{rem:triangularquotient}
The realization \(|\cubmap{q}|\) is injective on every closed simplex if and only if no simplex of \(\cI(\widetilde Y)\) contains two distinct vertices in the same deck orbit. Under this condition, the quotient simplex cells make \(\cI(Y)\) a triangular complex without subdivision. 
Indeed, suppose that \(x\) and \(\gamma x\) lie in one simplex. The vertices with positive barycentric coordinates in \(x\) are sent to vertices of that same simplex. The orbit condition forces each of these vertices to be fixed, and hence \(\gamma x=x\).
The converse follows by considering vertices. Simplicity guarantees this condition by \Cref{thm:simplecomparison}, but is not necessary: the non-simple torus in \Cref{ex:diagonaltorus} has an intersection complex consisting of one vertex.
\end{remark}

For a component of the subdivided upstairs complex and its setwise stabilizer, the action gives a developable complex of groups whose local groups are the full stabilizers of the corresponding flags \cite[III.C.2.9 and III.C.2.11]{BH99}. These are not in general the groups \(\pi_1(\Lambda_{\sigma,1})\times\pi_1(\Lambda_{\sigma,2})\); see \Cref{prop:stabilizer}. If this component is simply connected, then it is the universal development; see \cite[Theorem~III.C.3.13 and III.C.3.14]{BH99}.

\section{Quasi-isometries and factor data}\label{sec:qiquotients}

Throughout this section, let \(Y_1,Y_2\) be compact connected two-sided weakly special square complexes and put \(X_i=\widetilde Y_i\) for \(i=1,2\).

\subsection{The geometric correspondence without simplicity}
Let \(\phi:X_1\to X_2\) be a quasi-isometry. Huang's flat-preservation theorem \cite[Theorem~1.3]{Hua17} gives a uniform constant \(c\) such that, for every flat \(F\subset X_1\), there is a flat \(F'\subset X_2\) with \(\distH(\phi(F),F')\le c\). Uniqueness follows from \Cref{lem:coarse}; a quasi-inverse gives the inverse flat correspondence.

\begin{lemma}[Two elementary coarse-intersection facts]\label{lem:general-coarse-intersection}
Let \(X\) be a \(\CAT\) square complex. The following statements hold.
\begin{enumerate}
\item Let \(\overline{K}_1,\ldots,\overline{K}_n\) be convex cubical subcomplexes with nonempty common intersection. For every \(r\ge0\), there is \(R=R(r,n)\) such that
\[\bigcap_i\mathcal N_r(\overline{K}_i)
\subseteq\mathcal N_R\left(\bigcap_i\overline{K}_i\right).\]
\item If fixed-radius neighborhoods of two flats contain a quasi-isometrically embedded Euclidean half-plane in their intersection, then the flats themselves intersect.
\end{enumerate}
\end{lemma}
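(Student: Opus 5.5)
The plan for part (1) is to work combinatorially with hyperplanes rather than with the \(\ell_2\)-metric. The key fact is that for a convex subcomplex \(\overline K\) and a vertex \(x\), the \(\ell_1\)-distance from \(x\) to \(\overline K\) equals the number of hyperplanes separating \(x\) from \(\overline K\); this is the gate property of convex subcomplexes. In dimension two, the \(\ell_1\)- and \(\ell_2\)-metrics on vertices are comparable with universal constants.

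Given a point \(p\in\bigcap_i\mathcal N_r(\overline K_i)\), I first replace \(p\) by a vertex \(x\) at distance at most \(\sqrt2\). Then \(x\) is separated from each \(\overline K_i\) by at most \(C(r+\sqrt2)\) hyperplanes, where \(C\) is universal. Next I show that every hyperplane \(\mathfrak h\) separating \(x\) from \(\bigcap_i\overline K_i\) separates \(x\) from some single \(\overline K_i\). Suppose it separated \(x\) from none of them, and let \(\mathfrak h^+\) be the combinatorial halfspace containing \(x\). Then \(\mathfrak h^+\) meets every \(\overline K_i\), and the \(\overline K_i\) pairwise intersect because their total intersection is nonempty. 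The Helly property for convex subcomplexes of a \(\CAT\) cube complex then gives \(\mathfrak h^+\cap\bigcap_i\overline K_i\neq\emptyset\), which is a contradiction. Hence at most \(nC(r+\sqrt2)\) hyperplanes separate \(x\) from \(\bigcap_i\overline K_i\). Converting back to \(\ell_2\) gives \(R=nC(r+\sqrt2)+\sqrt2\).

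For part (2), let \(F_1,F_2\) be the flats, \(c\) the radius, and \(H\) the quasi-isometrically embedded half-plane. Consider the convex function \(d=\dist_X(\cdot,F_2)\) restricted to \(F_1\cong\mathbb R^2\). Its sublevel set \(C=\{d\le 2c\}\) is convex in \(F_1\) and contains the nearest-point image \(S\subset F_1\) of \(H\); this image is coarsely a quasi-isometric copy of a half-plane. The set \(S\) cannot lie in a bounded neighbourhood of a line in \(F_1\), by comparing volume growth of balls. A convex planar set that is not within bounded distance of a line contains Euclidean discs of arbitrarily large radius \(\rho\).

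On such a disc \(D_\rho\), the function \(d\) is convex and bounded by \(2c\), so its oscillation on the concentric disc of radius \(\rho/2\) is \(O(c/\rho)\). Letting \(\rho\to\infty\) and passing to a subsequence, I extract regions of \(F_1\) of arbitrarily large size on which \(d\) is almost constant, equal to some \(\delta\ge 0\).

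If \(\delta>0\), I apply the flat quadrilateral argument of \Cref{lem:coarse}, or the flat strip theorem, to the nearest-point projection to \(F_2\). It produces convex subsets isometric to large two-dimensional regions times \([0,\delta']\) with \(\delta'>0\). This is impossible in a square complex. Hence \(\delta=0\), and since \(F_1\) and \(F_2\) are subcomplexes, \(F_1\cap F_2\neq\emptyset\).

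I expect the main obstacle to be making the limiting step in (2) rigorous, since \(d\) is only almost constant and a limit of discs need not exist in \(X\). I would first try to avoid limits by using cubical structure. Distances between vertices of \(F_1\) and the subcomplex \(F_2\) are controlled by numbers of separating hyperplanes. So I would count hyperplanes separating a point of \(F_1\) from \(F_2\) and show that, on a large disc, this count is constant. Every such hyperplane then crosses \(F_1\) in no line meeting the disc, and two-dimensionality forces the separating hyperplanes to be transverse to all the hyperplanes crossing the disc, which is impossible unless there are none.
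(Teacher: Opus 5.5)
Your part (1) is correct, but it argues differently from the paper, which simply applies Huang's \cite[Remark~2.13(3)]{Hua17} successively to \(\bigcap_{i<j}\overline K_i\) and \(\overline K_j\). Your gate-plus-Helly count is self-contained: a hyperplane separating \(x\) from \(\bigcap_i\overline K_i\) must separate \(x\) from some single \(\overline K_i\). It also gives an explicit \(R\), linear in \(r\) and \(n\).

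Part (2) has a genuine gap, at the point you flagged yourself.
\begin{itemize}
\item The flat quadrilateral theorem and the flat strip theorem need \(\dist_X(\cdot,F_2)\) to be \emph{exactly} constant along geodesics. Being almost constant on large regions yields no flat rectangle. A general \(\CAT\) square complex may be locally infinite and have no cocompact action, so there is no limit object to pass to.
\item Your oscillation estimate is also wrong. A convex function with values in \([0,2c]\) on a disc of radius \(\rho\) is \(4c/\rho\)-Lipschitz on the concentric disc of radius \(\rho/2\). Its oscillation there is therefore \(O(c)\), not \(O(c/\rho)\); it becomes small only on discs of radius \(o(\rho)\).
\item The cubical fallback has the right endgame, but its key claim, that the separating count is constant on a large disc, is unproved. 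The transversality you attribute to two-dimensionality actually comes from a four-quadrant argument: a hyperplane across which the count does not change must cross \(F_2\), and hence crosses every hyperplane separating the disc from \(F_2\). Two-dimensionality enters only afterwards, to rule out three pairwise crossing hyperplanes.
\end{itemize}

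The missing idea is to separate the flats first. If \(F_1\cap F_2=\varnothing\), some hyperplane \(\mathfrak h\) separates them. Every point within \(c\) of both flats is then within \(c\) of \(\mathfrak h\), because its geodesic to the flat on the far side crosses \(\mathfrak h\). In a \(\CAT\) square complex, \(\mathfrak h\) is a convex tree, so \(\mathcal N_c(\mathfrak h)\) is quasi-isometric to a tree and cannot contain a quasi-isometrically embedded half-plane. This is the paper's proof, and it makes your convex-analysis steps unnecessary.

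If you want to keep your counting approach, it can be completed as follows.
\begin{itemize}
\item With \(\mathfrak h\) as above, every hyperplane crossing both flats crosses \(\mathfrak h\). So one hyperplane of each coordinate direction of \(F_1\) that crosses \(F_2\) already gives three pairwise crossing hyperplanes, a contradiction.
\item Such hyperplanes exist. Along a row of \(F_1\), the count drops, stays, or rises across a vertical hyperplane according to a property of that hyperplane alone.
\item In dimension two these hyperplanes are pairwise disjoint, so their halfspaces are nested. Hence all drops precede all rises.
\item Consequently, on a row segment where the count is at most \(M\), at most \(2M\) vertical hyperplanes miss \(F_2\).
\end{itemize}
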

\begin{proof}
For (1), apply \cite[Remark~2.13(3)]{Hua17} successively to \(\bigcap_{i<j}\overline{K}_i\) and \(\overline{K}_j\). These are convex cubical subcomplexes with nonempty intersection, so their distance is zero at each step.

For (2), suppose the flats $F,G$ are disjoint and choose a hyperplane $H$ separating these convex cubical subcomplexes. Every point of $\mathcal{N}_r(F)\cap \mathcal{N}_r(G)$ is within distance $r$ of $H$: a path to the flat on the opposite side must cross $H$. In a CAT(0) square complex, $H$ is a convex tree. Hence $\mathcal{N}_r(H)$ is quasi-isometric to a tree and cannot contain a quasi-isometrically embedded Euclidean half-plane, by stability of quasi-geodesics in a tree.
\end{proof}

\begin{lemma}[Tripod detection]\label{lem:tripod}
Suppose that \(\cubmap{u}:T_1\times T_2\to X_1\) is a product subcomplex whose factors are an infinite tripod and a line, and let \(\gamma\) be the central line of \(\im{\cubmap{u}}\). Then there is a singular line \(\gamma'\subset X_2\) at finite Hausdorff distance from \(\phi(\gamma)\) and the three corresponding target flats lie in the parallel set of \(\gamma'\).
\end{lemma}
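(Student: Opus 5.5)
The plan is to reduce everything to Huang's flat correspondence and to the two coarse-intersection facts of \Cref{lem:general-coarse-intersection}.

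\emph{Step 1: three target flats.} Write \(\im{\cubmap{u}}=T_1\times\gamma\), where \(T_1\) is an infinite tripod with legs \(\rho_1,\rho_2,\rho_3\) from a centre \(o\). Up to choosing the \(\mathbb R\)-coordinate, \(\gamma=\{o\}\times\mathbb R\). The three flats are \(F_{ab}=(\rho_a\cup\rho_b)\times\mathbb R\) for \(\{a,b\}\subset\{1,2,3\}\). Any two of them intersect exactly in the half-flat \(\rho_c\times\mathbb R\) sharing the same \(c\), where \(c\) is an index of \(\{a,b\}\), and all three intersect exactly in \(\gamma\). By Huang's theorem there are flats \(F'_{ab}\subset X_2\) with \(\distH(\phi(F_{ab}),F'_{ab})\le c\).

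\emph{Step 2: the target flats pairwise intersect in half-flats.} Take two of them, say \(F_{12}\) and \(F_{13}\). The image \(\phi(\rho_1\times\mathbb R)\) is a quasi-isometrically embedded half-plane lying in \(\mathcal N_c(F'_{12})\cap\mathcal N_c(F'_{13})\). By \Cref{lem:general-coarse-intersection}(2), the flats \(F'_{12}\) and \(F'_{13}\) intersect. Their intersection is convex and cubical, so \Cref{lem:general-coarse-intersection}(1) applies: \(\phi(\rho_1\times\mathbb R)\) lies within bounded distance \(R\) of \(F'_{12}\cap F'_{13}\).

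A convex cubical subcomplex of a flat is a product of intervals or rays and lines in the cubical coordinates. Since this one contains a quasi-half-plane in a bounded neighbourhood, it is unbounded in two directions. So it is a half-flat, a quadrant, a strip-complement, or the whole flat. It cannot be the whole flat: otherwise \(\phi(F_{12})\) and \(\phi(F_{13})\) would be at finite Hausdorff distance, which contradicts the quasi-isometry, since \(\rho_2\times\mathbb R\) is far from \(F_{13}\). A quadrant is excluded by comparing quasi-isometry types with the half-plane image. I therefore expect each pairwise intersection \(F'_{ab}\cap F'_{ac}\) to be a half-flat bounded by a singular line.

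\emph{Step 3: a single line and the parallel set.} All three flats \(F'_{ab}\) intersect: \(\phi(\gamma)\) is coarsely in all three, and \Cref{lem:general-coarse-intersection}(1) applies to the triple once pairwise nonempty intersections are known, using the Helly property for convex subcomplexes. Their common intersection \(K'\) is then convex, and by (1) it lies within bounded Hausdorff distance of \(\phi(\gamma)\), possibly after using the quasi-inverse for the reverse inclusion.

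Set \(\gamma'\) to be the boundary line of one of the half-flats \(H'_c=F'_{ab}\cap F'_{ac}\). The three half-flats \(H'_1,H'_2,H'_3\) pairwise meet only in a set at finite distance from \(\phi(\gamma)\). Two half-flats in a flat whose intersection is a coarse line must be complementary halves, so they share the boundary line. Hence all three boundary lines agree with \(\gamma'\), and \(K'=\gamma'\), which is at finite Hausdorff distance from \(\phi(\gamma)\). Each \(F'_{ab}=H'_a\cup H'_b\) contains \(\gamma'\) as a line, so it lies in the parallel set of \(\gamma'\).

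\emph{Main obstacle.} The delicate step is Step 2/3: ruling out that a pairwise intersection is a quadrant or a strip-type region, and proving that the boundary lines coincide and are singular. For this I would first use cubical convexity, which makes the boundaries consist of hyperplane carriers and hence singular lines. I would then use the quasi-isometry of \(\phi\) to compare growth: half-planes versus quadrants are distinguished by their coarse intersections with \(\phi(\rho_a\times\mathbb R)\) versus its complement in \(\phi(F_{ab})\).
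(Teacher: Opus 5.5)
\textbf{There is a gap, though it sits in an unnecessary detour.} The first half of your Step~3 is exactly the paper's argument. \Cref{lem:general-coarse-intersection}(2) makes the target flats pairwise intersect, and cubical Helly then gives a nonempty triple intersection $K'$. Part~(1), together with a quasi-inverse for the reverse inclusion, gives $\distH(K',\phi(\gamma))<\infty$.

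The gap comes afterwards, where you try to obtain $\gamma'$ as the boundary line of a pairwise intersection. That route rests on two steps that fail as written.

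\emph{The quadrant case.} You cannot exclude a quadrant ``by comparing quasi-isometry types'': a Euclidean quadrant is bi-Lipschitz equivalent to a half-plane (double the polar angle). In fact a bi-Lipschitz homeomorphism of the plane can carry a half-plane onto a quadrant. So the half-plane $\phi(\rho_1\times\mathbb R)$ on its own cannot rule this out. You would need the second pairwise intersection in the same flat: it would then have to be coarsely a three-quarter plane, and no convex set is.

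\emph{The shared boundary line.} The claim that two half-flats in a flat meeting in a coarse line must be complementary halves with a common boundary line is false. For example, $\mathbb R\times[0,\infty)$ and $\mathbb R\times(-\infty,w]$ meet in a strip of width $w$. So neither ``all three boundary lines agree'' nor ``$K'=\gamma'$'' follows from what you wrote. Your derivation that each $F'_{ab}$ contains $\gamma'$ inherits this problem.

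\textbf{The fix.} None of this is needed. Once you know $\distH(K',\phi(\gamma))<\infty$, note that $K'$ is a convex cubical subcomplex of the flat $F'_{12}$, hence a product of intervals. Since it is quasi-isometric to a line, it is a strip $\mathbb R\times[a,b]$ of finite width, possibly zero. Any singular line $\gamma'$ in this strip, for instance a boundary line, lies within Hausdorff distance $b-a$ of $K'$, and hence at finite Hausdorff distance from $\phi(\gamma)$. It is also contained in all three target flats, so each of them lies in the parallel set of $\gamma'$.

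This is how the paper concludes. It deliberately does not assert that $K'$ is a single line, and your Step~2 can be dropped entirely.
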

\begin{proof}
We adapt the tripod argument of \cite[Lemma~4.1]{BKS08} using Huang's flat correspondence. Write \(F_1,F_2,F_3\) for the three source flats, whose triple intersection is \(\gamma\), and \(F_i'\) for their target flats. Each source pair shares a half-plane, so \Cref{lem:general-coarse-intersection}(2) shows that the target flats pairwise meet. Cubical Helly gives a nonempty intersection \(\overline{K}=F_1'\cap F_2'\cap F_3'\).

By \Cref{lem:general-coarse-intersection}(1), the image \(\phi(\gamma)\), which lies at bounded distance from all three target flats, lies at bounded distance from \(\overline{K}\).
Applying a quasi-inverse and the same estimate to the three source flats gives the reverse coarse containment. Thus \(\overline{K}\) is quasi-isometric to a line. A convex cubical subcomplex of a flat is a product of intervals. Consequently \(\overline{K}\) is a strip \(\mathbb R\times[a,b]\) of finite width, possibly zero. A singular line \(\gamma'\) in this strip has finite Hausdorff distance from \(\phi(\gamma)\). Each \(F_i'\) contains \(\gamma'\), so each lies in its parallel set.
\end{proof}

Only finite Hausdorff distance is needed for the lines in \Cref{lem:tripod}. The uniform product and core bounds below come directly from the uniform flat correspondence.

\begin{lemma}[Geometric product matching]\label{lem:general-matching}
Let $\phi:X_1\to X_2$ be a quasi-isometry. For every product subcomplex \(\cubmap{u}:T_1\times T_2\to X_1\) with leafless tree factors, there is a product subcomplex \(\cubmap{v}:T'_1\times T'_2\to X_2\) with leafless tree factors such that
\[\phi(\im{\cubmap{u}})\subset \mathcal{N}_D(\im{\cubmap{v}}),\]
where $D$ depends only on the quasi-isometry constants and the ambient complexes.
\end{lemma}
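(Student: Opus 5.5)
Fix a base flat \(F_0\subset\im{\cubmap{u}}\), namely the product of a line \(\ell_1\subset T_1\) and a line \(\ell_2\subset T_2\) through a base vertex. Huang's flat correspondence gives a flat \(F_0'\) with \(\distH(\phi(F_0),F_0')\le c\). The target product will be assembled from flats \(F'\) assigned to the flats \(F=\alpha_1\times\alpha_2\subset\im{\cubmap{u}}\), where \(\alpha_j\subset T_j\) are lines. Each such \(F'\) lies within \(c\) of \(\phi(F)\). Since \(T_1,T_2\) are leafless, \(\im{\cubmap{u}}\) is the union of these flats. It therefore suffices to find a single standard product subcomplex \(\cubmap{v}\) whose image contains every \(F'\). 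Then \(D=c\) works, and this constant depends only on \(\phi\), \(Y_1\) and \(Y_2\), as required.

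\textbf{Step 1: two flats sharing a line.} Consider flats \(F=\alpha_1\times\alpha_2\) and \(G=\beta_1\times\alpha_2\), where \(\alpha_1\ne\beta_1\) share a ray or a segment. First suppose \(\alpha_1\cap\beta_1\) contains a ray. Then \(F\cap G\) contains a half-plane, and \Cref{lem:general-coarse-intersection}(2) shows that \(F'\cap G'\neq\emptyset\). Completing \(\alpha_1\cup\beta_1\) to a tripod when they branch, \Cref{lem:tripod} gives a singular line \(\gamma'\) with \(F',G'\subset P(\gamma')\). Here the parallel set \(P(\gamma')\) of \(\gamma'\) is a convex product \(\mathcal T\times\gamma'\), and the directions in \(F'\) transverse to \(\gamma'\) are lines of the tree \(\mathcal T\).

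Next suppose \(\alpha_1,\beta_1\) share only a finite segment. Since \(T_1\) is leafless, there is a line \(\delta\) sharing a ray with each of them. Apply the previous case twice, to the pairs \((\alpha_1,\delta)\) and \((\delta,\beta_1)\). The lines produced both lie at finite Hausdorff distance from \(\phi(\alpha_2)\) inside \(D'\), where \(D'\) is the target flat of \(\delta\times\alpha_2\). By \Cref{lem:coarse}, applied within the flat, these two lines are parallel. Hence the \(\alpha_2\)-direction of the target is a well-defined parallel class of singular lines, determined by the second coordinate line.

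\textbf{Step 2: assembling a product.} Fix the target lines \(\ell_1',\ell_2'\) in \(F_0'\) corresponding to \(\ell_1,\ell_2\). Every flat of the form \(\alpha_1\times\ell_2\) has its target in \(P(\ell_2')\), after passing along chains of flats using Step 1 and the parallelism just established. Let \(T_1'\subset\mathcal T\) be the union of the transverse lines obtained from these target flats. Define \(T_2'\) symmetrically, using flats of the form \(\ell_1\times\alpha_2\). Both trees are unions of lines, so they are leafless. Now take a general flat \(\alpha_1\times\alpha_2\). Its target contains a line parallel to the line \(\alpha_1'\) assigned to \(\alpha_1\times\ell_2\), and a line parallel to the line \(\alpha_2'\) assigned to \(\ell_1\times\alpha_2\). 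The target flat is therefore spanned by these two directions. It thus lies in the convex hull of \(T_1'\times T_2'\), which is realized by an embedded product through \(F_0'\), again by the alignment argument of \Cref{lem:maximalproducts}. Let \(\cubmap{v}\) be the inclusion of this product. It is a product subcomplex by \Cref{lem:productrestriction}.

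\textbf{Main obstacle.} The hard part is Step 2: showing that the product directions glue consistently, so that the union of the transverse lines really forms a product \(T_1'\times T_2'\) and not merely a union of flats each lying in a different parallel set. I would approach this by induction on the combinatorial distance between flats in \(\im{\cubmap{u}}\), using Step 1 at each move. The key observation is that a square \(\{e_1\}\times\{e_2\}\), and the flats through it, pin down both target directions simultaneously. If this consistency fails, a fallback is to pass to a quasi-inverse and use \Cref{lem:coarse} to compare the two candidate products.
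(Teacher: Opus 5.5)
Your reduction (find one standard product whose image contains every target flat, then take \(D=c\)) and your Step~1 (tripod detection at branch points, with the detected lines parallel inside a common target flat) essentially agree with the paper. Together they already settle the cases in which a factor is a line: there all target flats lie in one parallel set \(\mathcal T\times\ell'\), and one takes the product of \(\ell'\) with the union of lines in \(\mathcal T\).

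The genuine gap is Step~2, the case in which both factors branch, which is the real content of the lemma. Your trees \(T_1'\) and \(T_2'\) are defined in the transverse factors of two different parallel sets, \(P(\ell_2')\) and \(P(\ell_1')\). So ``\(T_1'\times T_2'\)'' is not yet a subset of \(X_2\). Knowing that each target flat contains lines parallel to \(\alpha_1'\) and \(\alpha_2'\) does not by itself put all the target flats into one product. The appeal to \Cref{lem:maximalproducts} does not fill this: its alignment argument concerns an increasing union of subcomplexes already known to be products. It says nothing about the convex hull of a union of flats. The suggested induction over flats and the quasi-inverse fallback are not arguments; local consistency of directions at each square does not produce a global product splitting.

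The missing ingredient is hyperplane bookkeeping together with the cubical product criterion. Let \(\mathcal H\) and \(\mathcal V\) be the hyperplanes dual to all detected lines \(\alpha'\) (from lines of \(T_1\)) and all \(\beta'\) (from lines of \(T_2\)), respectively. Parallel singular lines are dual to the same hyperplanes \cite[Lemma~2.14]{Hua17}. Each target flat \(F'_{\alpha,\beta}\) contains translates of \(\alpha'\) and \(\beta'\), by coarse containment and \cite[Lemma~2.10]{Hua17}. Hence every \(H\in\mathcal H\) crosses every \(V\in\mathcal V\), and the hyperplanes meeting \(\overline K=\bigcup_{\alpha,\beta}F'_{\alpha,\beta}\) are exactly \(\mathcal H\sqcup\mathcal V\). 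Dimension two makes each family pairwise disjoint. Chains of source flats sharing half-planes, together with \Cref{lem:general-coarse-intersection}(2), show that \(\overline K\) is connected. Therefore its cubical convex hull is crossed by exactly these hyperplanes. By the product criterion, the hull splits as a product of two trees, each the convex hull of lines and hence leafless. Its inclusion is the required \(\cubmap{v}\).
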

\begin{proof}
We follow the proof of \cite[Theorem~3.2]{Oh22}, using \Cref{lem:tripod} for the tripod argument and constructing the target product directly from the corresponding flats. This does not require the image-maximality assertions of \cite[Lemmas~2.13--2.14]{Oh22}.

Write $F'$ for the flat corresponding to $F$. Recall that if a singular line $\ell\subset X_2$ is coarsely contained in a convex cubical subcomplex $\overline{K}\subset X_2$, then $\dist_{X_2}(\ell(t),\overline{K})$ is bounded and convex on $\mathbb R$, hence constant. Its nearest-point projection is therefore a parallel singular line in $\overline{K}$ \cite[Lemma~2.10]{Hua17}. Parallel singular lines are dual to the same hyperplanes \cite[Lemma~2.14]{Hua17}.

Identify $T_1\times T_2$ with \(\im{\cubmap{u}}\) using \(\cubmap{u}\). If both factors are lines, choose \(\cubmap{v}\) with image the corresponding target flat.
Suppose exactly one factor is a line. Choose a branch point in the other factor and detect its central line $\gamma'$ by \Cref{lem:tripod}. Every source flat contains a line parallel to the central source line, so $\gamma'$ is coarsely contained in the corresponding target flat. The preceding observation gives a parallel singular line in that flat. Hence all corresponding flats lie in the cubical parallel set $\mathbb R\times T$ of $\gamma'$; see \cite[Section~2.1]{Oh22}. The union of lines in $T$ is a nonempty leafless subtree $T^\circ$, so \(\mathbb R\times T^\circ\) contains these flats. Choose \(\cubmap{v}\) with this image. Since \(\im{\cubmap{u}}\) is a union of flats, the uniform flat bound proves the claim.

Now suppose both factors have branch points, and fix branch points $a_0\in T_1$ and $b_0\in T_2$. For every line $\alpha\subset T_1$, tripod detection applied to $\alpha\times\{b_0\}$ produces a target singular line $\alpha'$. Similarly, each line $\beta\subset T_2$ gives a target singular line $\beta'$ from $\{a_0\}\times\beta$. The source lines $\alpha\times\{b_0\}$ and $\{a_0\}\times\beta$ are each coarsely contained in $\alpha\times\beta$, so the projection observation applies to their detected target lines. Thus the target flat $F'_{\alpha,\beta}$ corresponding to $\alpha\times\beta$ contains parallel translates of $\alpha'$ and $\beta'$. They have different directions: otherwise the source horizontal and vertical lines would be at finite Hausdorff distance. Thus they are its two cubical coordinate directions.

Let $\mathcal H$ and $\mathcal V$ be the hyperplanes dual to all the $\alpha'$ and all the $\beta'$, respectively. Parallel singular lines have identical dual hyperplane sets. The preceding paragraph implies that every $H\in\mathcal H$ crosses every $V\in\mathcal V$, and that the hyperplanes meeting the union
\[\overline{K}=\bigcup_{\alpha,\beta}F'_{\alpha,\beta}\]
are precisely $\mathcal H\sqcup\mathcal V$. Dimension two implies that hyperplanes within either family are pairwise disjoint.

The union $\overline{K}$ is connected. Any two lines in a tree can be joined by a finite sequence of lines with consecutive lines sharing a ray. Varying one factor at a time joins any two source flats by a finite sequence with consecutive flats sharing a half-plane. The target flats of each such pair intersect by \Cref{lem:general-coarse-intersection}(2).
Let $\overline{Q}$ be the cubical convex hull of $\overline{K}$. A hyperplane not meeting the connected subcomplex $\overline{K}$ has $\overline{K}$, and hence $\overline{Q}$, in one halfspace. Thus $\overline{Q}$ has precisely the hyperplanes $\mathcal H\sqcup\mathcal V$. The cubical product criterion splits $\overline{Q}$ as a product of two trees, since every hyperplane of $\mathcal H$ crosses every hyperplane of $\mathcal V$. Each factor is the convex hull of the projections of the flats in $\overline{K}$, which are lines, and is therefore leafless. Choose \(\cubmap{v}\) with image \(\overline{Q}\). The uniform flat bound gives \(\phi(\im{\cubmap{u}})\subset\mathcal{N}_D(\im{\cubmap{v}})\).
\end{proof}

\begin{theorem}[Quasi-isometry invariance without simplicity]\label{thm:geometricqi}
Let \(\cI(X_1)\) and \(\cI(X_2)\) be the intersection complexes described in \Cref{def:geometricic}. A quasi-isometry \(\phi:X_1\to X_2\) induces a unique simplicial isomorphism \(\Phi:\cI(X_1)\to\cI(X_2)\) such that \(\phi(\im{\cubmapu{m}})\) is at uniformly bounded Hausdorff distance from \(\im{\cubmapu{m}'}\) whenever \(\Phi([\cubmapu{m}])=[\cubmapu{m}']\). For the core-map labels, we have
\[\distH\bigl(\phi(\im{\cubmapu{m}_\sigma}),\im{\cubmapu{m}_{\Phi(\sigma)}}\bigr)\le D'\]
uniformly over all simplices. In particular, the metric quasi-isometry types of the core-map domains are preserved.
\end{theorem}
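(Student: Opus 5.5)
We will build \(\Phi\) on vertices from \Cref{lem:general-matching}, extend it to simplices using flat incidence, and then control the core labels using \Cref{lem:general-coarse-intersection}(1).

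\emph{Vertices.} Let \(\cubmapu{m}\) be a maximal product subcomplex of \(X_1\). \Cref{lem:general-matching} gives a standard product subcomplex \(\cubmap{v}\) of \(X_2\) with \(\phi(\im{\cubmapu{m}})\subset\mathcal N_D(\im{\cubmap{v}})\). By \Cref{lem:maximalproducts}, \(\cubmap{v}\) factors through a maximal product subcomplex \(\cubmapu{m}'\). Apply the same construction to a quasi-inverse \(\bar\phi\) and \(\cubmapu{m}'\). This produces a maximal \(\cubmapu{m}''\) of \(X_1\) with \(\im{\cubmapu{m}}\) coarsely contained in \(\im{\cubmapu{m}''}\). \Cref{lem:coarse} then gives \(\im{\cubmapu{m}}\subseteq\im{\cubmapu{m}''}\), and maximality forces equality. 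Hence \(\im{\cubmapu{m}'}\) and \(\phi(\im{\cubmapu{m}})\) are at finite mutual Hausdorff distance.

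Two points require care here. First, the bound must be uniform; it comes from the uniform constant \(D\) together with \(b\)-type bounds. Second, \(\cubmapu{m}'\) must be unique. If \(\cubmapu{m}'_1\) and \(\cubmapu{m}'_2\) both coarsely contain \(\phi(\im{\cubmapu{m}})\), then each flat of \(\im{\cubmapu{m}'_1}\) corresponds under \(\bar\phi\) to a flat of \(\im{\cubmapu{m}}\). That flat in turn corresponds under \(\phi\) to a flat at bounded distance from \(\im{\cubmapu{m}'_2}\). Applying \Cref{lem:coarse} flatwise gives \(\im{\cubmapu{m}'_1}\subseteq\im{\cubmapu{m}'_2}\) and the reverse inclusion. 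Symmetry in \(\bar\phi\) shows that \(\Phi\) is a bijection on vertices.

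\emph{Simplices.} By \Cref{def:geometricic}, distinct vertices span a simplex exactly when their images share a flat \(F\). The corresponding target flat \(F'\) lies within \(c\) of each \(\phi(\im{\cubmapu{m}_i})\), hence within bounded distance of each \(\im{\cubmapu{m}'_i}\). \Cref{lem:coarse} applied to \(F'\) and each \(\im{\cubmapu{m}'_i}\) puts \(F'\) inside all of them. So \(\Phi\) preserves simplices, and the quasi-inverse gives the converse. Since \(\cI(X_i)\) is simplicial, this yields a simplicial isomorphism.

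\emph{Core labels.} For a simplex \(\sigma\), the core \(\im{\cubmapu{m}_\sigma}\) is the union of the flats in \(\overline K=\bigcap_i\im{\cubmapu{m}_i}\). Each such flat maps to a target flat lying in \(\bigcap_i\im{\cubmapu{m}'_i}\), by the argument above. Therefore \(\phi(\Core{\overline K})\) lies in the \(c\)-neighborhood of \(\Core{\overline K'}\), and symmetrically. Since every point of a core lies on a flat, this gives a Hausdorff bound \(D'\) depending only on \(c\) and the quasi-isometry constants. Preservation of quasi-isometry types of the core domains then follows from this bound.

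The main obstacle is uniqueness of the vertex assignment, in particular its independence of the auxiliary \(\cubmap{v}\). We will handle it by the flatwise application of \Cref{lem:coarse} described above, rather than comparing whole images, since images are only coarsely controlled.
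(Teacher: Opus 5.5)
Your proposal is correct and follows essentially the same route as the paper. Vertices come from \Cref{lem:general-matching}, \Cref{lem:maximalproducts}, a quasi-inverse, \Cref{lem:coarse} and maximality; simplices come from Huang's flat correspondence together with \Cref{lem:coarse}; and the core bound holds because cores are unions of exactly these flats.

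The differences are only cosmetic. No $b$-type constants are needed, since the uniform bound comes from $D$ and the quasi-isometry constants. For uniqueness you can apply \Cref{lem:coarse} directly to whole images, $\im{\cubmapu{m}'_1}\subset\mathcal N_r(\im{\cubmapu{m}'_2})$, so your flatwise detour is unnecessary. It is still valid once you assume finite Hausdorff distance rather than mere coarse containment.
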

\begin{proof}
Apply \Cref{lem:general-matching} to a maximal product subcomplex \(\cubmapu{m}\). By \Cref{lem:maximalproducts}, enlarge the target product image to \(\im{\cubmapu{m}'}\) for a maximal product subcomplex \(\cubmapu{m}'\). Apply the same lemma to \(\cubmapu{m}'\) using a quasi-inverse. Then \Cref{lem:coarse} and maximality force the image of the resulting maximal product subcomplex in \(X_1\) to equal \(\im{\cubmapu{m}}\). This gives a uniform Hausdorff bound. The same argument proves uniqueness and bijectivity of the image correspondence. Since upstairs map classes are determined by their images, it gives the claimed correspondence of vertices \([\cubmapu{m}]\mapsto[\cubmapu{m}']\).

If a flat \(F\) is contained in every \(\im{\cubmapu{m}_i}\), then its target flat \(F'\) is coarsely contained in every corresponding \(\im{\cubmapu{m}_i'}\).
By \Cref{lem:coarse}, \(F'\subset \im{\cubmapu{m}_i'}\). The quasi-inverse gives the converse, proving the simplicial isomorphism. The images \(\im{\cubmapu{m}_\sigma}\) are unions of precisely these flats, so the uniform flat bounds give the asserted core bound. Each core map is an isometric embedding, which identifies its domain metric with the metric on its image.
\end{proof}

\subsection{Factor data and product-base groups}\label{sec:geometricmorphisms}

For a face chain \(\sigma_0\subset\cdots\subset\sigma_k\), choose representatives \(\cubmap{u}_i\) of the core-map labels \([\cubmapu{m}_{\sigma_i}]\) and their incidence factorizations \(\cubmap{u}_{i+1}=\cubmap{u}_i\circ\cubmap{j}_i\). Thus
\[\im{\cubmap{u}_k}\subset\cdots\subset\im{\cubmap{u}_0}.\]
Using the incidence maps, identify their domains with products \(T_{i,1}\times T_{i,2}\subset T_{0,1}\times T_{0,2}\) with nested factors. The map \(\cubmap{u}_0\) identifies these products with the corresponding images. Each factor is the universal cover of a finite leafless graph and is therefore either a line or a bushy tree. The \emph{factor data} are these quasi-isometry types and the equalities between consecutive factors.

\begin{proposition}\label{prop:factorchaindata}
Let \(\cubmap{u}_i:T_{i,1}\times T_{i,2}\to X_1\) and \(\cubmap{v}_i:T'_{i,1}\times T'_{i,2}\to X_2\) represent the core-map labels of two such chains, with the compatible domain identifications above. Let \(\cubmap{j}_i\) and \(\cubmap{k}_i\) be their respective domain inclusions for \(0\le i<k\). Suppose a quasi-isometry \(\psi:\im{\cubmap{u}_0}\to\im{\cubmap{v}_0}\) satisfies
\[\distH\bigl(\psi(\im{\cubmap{u}_i}),\im{\cubmap{v}_i}\bigr)<\infty\qquad(0\le i\le k).\]
After possibly interchanging the factors throughout the target chain, there are product quasi-isometries
\[\rho_i:T_{i,1}\times T_{i,2}\longrightarrow T'_{i,1}\times T'_{i,2}\qquad(0\le i\le k)\]
compatible with the domain inclusions \(\cubmap{j}_i\) and \(\cubmap{k}_i\) up to bounded distance:
\[\sup_{x\in T_{i+1,1}\times T_{i+1,2}}
\dist\bigl(\rho_i\circ\cubmap{j}_i(x),\cubmap{k}_i\circ\rho_{i+1}(x)\bigr)<\infty
\qquad(0\le i<k).\]
Moreover, corresponding factors have the same line or bushy type, and
\[T_{i+1,1}=T_{i,1}\ \Longleftrightarrow\ T'_{i+1,1}=T'_{i,1},\qquad T_{i+1,2}=T_{i,2}\ \Longleftrightarrow\ T'_{i+1,2}=T'_{i,2}\qquad(0\le i<k).\]
\end{proposition}
\begin{proof}
The bushy factors are of coarse type~I in the terminology of \cite[Definitions~3.3 and~3.5]{KKL98}: their asymptotic cones are geodesically complete real trees branching everywhere. The product theorem of Kapovich--Kleiner--Leeb \cite[Theorem~B]{KKL98} preserves the number of bushy factors and coarsely preserves projection to each such factor, after a permutation. We distinguish three cases according to the largest core image \(\im{\cubmap{u}_0}\).

If \(T_{0,1},T_{0,2}\) are bushy, then there are quasi-isometries \(\psi_1:T_{0,1}\to T'_{0,1}\) and \(\psi_2:T_{0,2}\to T'_{0,2}\) whose product is at finite distance from \(\psi\). Projecting the Hausdorff bounds for each \(\im{\cubmap{u}_i}\) gives
\[\distH(\psi_1(T_{i,1}),T'_{i,1})<\infty,\qquad \distH(\psi_2(T_{i,2}),T'_{i,2})<\infty.\]
The restrictions are quasi-isometries onto the corresponding subtrees up to a bounded adjustment, so they preserve line or bushy type. Two leafless subtrees of a tree at finite Hausdorff distance are equal: a geodesic ray continuing away from one subtree would otherwise have unbounded distance from it.
It follows that \(T_{i+1,1}=T_{i,1}\) implies \(T'_{i+1,1}=T'_{i,1}\), and likewise for the second factors. Applying quasi-inverses gives the converse implications.

If \(T_{0,1}\) is bushy and \(T_{0,2}\) is a line, then every \(T_{i,2}\) equals \(T_{0,2}\), since it is a nontrivial leafless subtree of a line. The target chain has the same property after matching its line factor. The cited theorem gives a quasi-isometry \(\psi_1:T_{0,1}\to T'_{0,1}\) such that projection of \(\psi\) to \(T'_{0,1}\) is at bounded distance from \(\psi_1\) composed with projection to \(T_{0,1}\). The same projection argument gives \(\distH(\psi_1(T_{i,1}),T'_{i,1})<\infty\) for every \(i\), and proves the desired assertions. No splitting assertion about the line coordinate of \(\psi\) is needed.

Finally, if both factors of \(\im{\cubmap{u}_0}\) are lines, then all \(\im{\cubmap{u}_i}\) equal \(\im{\cubmap{u}_0}\). The target largest core image also has two line factors, and its chain is likewise constant. Either matching of the two coordinates has the required properties.

In all three cases, we can therefore choose quasi-isometries \(\psi_r:T_{0,r}\to T'_{0,r}\), for \(r=1,2\), such that \(\distH(\psi_r(T_{i,r}),T'_{i,r})<\infty\) for every \(i\), choosing any isometry for each line factor of the largest core-map domains. Let \(\pi_{i,r}:T'_{0,r}\to T'_{i,r}\) be nearest-point projection and set
\[\rho_i=\bigl(\pi_{i,1}\circ\psi_1|_{T_{i,1}}\bigr)
\times\bigl(\pi_{i,2}\circ\psi_2|_{T_{i,2}}\bigr).\]
The Hausdorff bounds show that each projected restriction is a quasi-isometry onto its target subtree and is at bounded distance from the unprojected restriction. Thus the \(\rho_i\) are product quasi-isometries, and consecutive maps are at bounded distance on the smaller domain after applying the domain inclusions. This gives the stated compatibility.
\end{proof}

The \emph{product-base group} of a product subcomplex is the fundamental group of its domain, which is the direct product of two free groups. The image of a standard product subcomplex is called a \emph{standard product image}.

\begin{theorem}\label{thm:geometricisomorphism}
The simplicial isomorphism induced by a quasi-isometry in \Cref{thm:geometricqi} is an isomorphism of intersection complexes in the sense of \Cref{def:icmorphisms}. It preserves the factor data of every face chain. For product-base group labels it satisfies the semi-isomorphism chain condition in \cite[Section~3.2, following Definition~3.13]{Oh22}.
\end{theorem}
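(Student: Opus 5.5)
The plan is to check \Cref{def:icmorphisms} directly for \(\Phi\) and its inverse, with \Cref{prop:factorchaindata} as the main tool. By \Cref{thm:geometricqi}, \(\Phi\) is a simplicial isomorphism of \(\cI(X_1)\) onto \(\cI(X_2)\). The \(X_i\) are \(\CAT\), so \Cref{def:geometricic} shows that these complexes have no nontrivial diagram symmetries, no repeated vertex classes, and a unique face incidence for each face inclusion. Hence \(\Phi\) is a combinatorial isomorphism in the sense of \Cref{sec:icmorphisms}. Since the target is simply connected, the core maps \(\cubmapu{m}_{\sigma_i}\) serve as their own elevations, as in the remark after \Cref{def:icmorphisms}. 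The lifted factorization maps are then just the domain inclusions \(\cubmap{j}_i\) and \(\cubmap{k}_i\) of \Cref{sec:geometricmorphisms}.

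Fix a finite face chain \(\sigma_0\subset\cdots\subset\sigma_k\) in \(\cI(X_1)\) and its image chain \(\Phi(\sigma_0)\subset\cdots\subset\Phi(\sigma_k)\). Let \(\cubmap{u}_i\) and \(\cubmap{v}_i\) be the core-map representatives, with nested domains as in \Cref{sec:geometricmorphisms}. The theorem requires a quasi-isometry \(\psi:\im{\cubmap{u}_0}\to\im{\cubmap{v}_0}\) with finite Hausdorff distances at every level. I would construct it by composing \(\phi|_{\im{\cubmap{u}_0}}\) with nearest-point projection onto the convex set \(\im{\cubmap{v}_0}\). By the core bound \(D'\) of \Cref{thm:geometricqi}, this moves points a bounded amount. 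It is a quasi-isometry because both images are isometrically embedded and convex, and the same construction applied to a quasi-inverse of \(\phi\) gives a coarse inverse. The same bound, together with the inclusions \(\im{\cubmap{v}_i}\subset\im{\cubmap{v}_0}\), gives \(\distH(\psi(\im{\cubmap{u}_i}),\im{\cubmap{v}_i})\le 2D'\) for every \(i\). \Cref{prop:factorchaindata} then supplies product quasi-isometries \(\rho_i\) that satisfy \eqref{eq:morphismcompatibility} with a consistent matching of factors along the chain. They also give the preservation of line or bushy type and of the equalities \(T_{i+1,r}=T_{i,r}\), which is exactly the factor-data assertion. Applying the same argument to \(\Phi^{-1}\) with a quasi-inverse of \(\phi\) shows that \(\Phi^{-1}\) is a morphism, so \(\Phi\) is an isomorphism.

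For the product-base group assertion, I expect the semi-isomorphism chain condition of \cite{Oh22} to ask, along each face chain, for a matching of factors such that free factors of rank one correspond, free factors of rank at least two correspond, and equalities of consecutive factor groups are preserved. To translate, fix a deck-equivariant choice of core maps. Along a chain, I would take the factor groups to be the stabilizers in \(\pi_1(Y_1)\) of the factor trees \(T_{i,r}\), which act cocompactly on them. A factor is a line exactly when its group is infinite cyclic, and bushy exactly when the group is free of rank at least two. Equality of consecutive factor groups corresponds to equality of consecutive leafless subtrees, by cocompactness and the fact that leafless subtrees are determined by the group acting on them. The factor-data preservation then gives the condition.

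The step I expect to be the main obstacle is this last translation. The product-base group of a downstairs product subcomplex can be a proper subgroup of the full stabilizer (\Cref{prop:stabilizer}), so the definition must use the groups attached to the chosen elevations and not simplex stabilizers. I would state precisely which groups label the chain and check that equality of subtrees corresponds to equality of these groups, and not merely to commensurability.
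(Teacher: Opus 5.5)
Your argument for the first two assertions is the paper's. You restrict \(\phi\) to the largest core image, project it to the target core image, apply \Cref{prop:factorchaindata}, and repeat with a quasi-inverse. Your extra checks, namely that \(\Phi\) is a combinatorial isomorphism and the \(2D'\) Hausdorff bound, are correct.

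The gap is in the group assertion. The statement is about \emph{product-base group} labels, that is, \(\pi_1\) of the finite domains of the downstairs core maps, and the chain condition concerns the nested chain of these groups. You substitute ``stabilizers in \(\pi_1(Y_1)\) of the factor trees'' but leave three things undone. You do not say which subsets of \(\widetilde Y_1\) these trees are. You do not show that their stabilizers equal the product-base factors. You do not produce the inclusions \(A_{i+1}\le A_i\). You flag this yourself as the obstacle and leave it open. It is not a formality: natural stabilizer-type groups attached to a core image can be strictly larger than the product-base group (cf.\ \Cref{prop:stabilizer}). Examples are the stabilizer of the image, or the projection of that stabilizer to a factor.

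The paper supplies the missing step with \Cref{lem:factorization}. The upstairs incidence factorizations of the core maps descend to product embeddings \(\Gamma_{i+1,r}\hookrightarrow\Gamma_{i,r}\) of finite leafless graphs. Choosing a vertex in the smallest core and compatible basepoints gives a chain \(A_k\times B_k\le\cdots\le A_0\times B_0\) of factor-wise free-factor inclusions. The rest follows in three steps:
\begin{itemize}
\item Each \(T_{i,r}\) is the minimal invariant subtree of its factor group, so equal groups give equal trees.
\item Conversely, equal lifted factors force the downstairs graph embedding to be onto, by the covering identity in the proof of \Cref{lem:factorization}, so the groups agree.
\item A factor group is cyclic exactly when its tree is a line.
\end{itemize}
Combined with \Cref{prop:factorchaindata}, this gives the chain condition.

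Your stabilizer route can also be completed. Take \(T_{i,r}\) to mean the coordinate fiber through a fixed vertex of the smallest core. It is the elevation of an embedded, locally convex finite graph in \(Y_1\), hence a full component of that graph's preimage. Its stabilizer is therefore exactly the image of that graph's fundamental group, which is the product-base factor. Nesting follows because these components are nested. Some argument of this kind is required, and it is where the embedded-coordinate-fiber hypothesis is used.
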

\begin{proof}
For any face chain, restrict the given quasi-isometry to its largest core image and project to the corresponding target core image. By \Cref{thm:geometricqi}, this is a quasi-isometry satisfying the hypotheses of \Cref{prop:factorchaindata}. The product quasi-isometries supplied by that proposition satisfy \Cref{def:icmorphisms}, and the same proposition gives preservation of the factor data. Applying the argument to a quasi-inverse gives the inverse morphism.

For the assertion about groups, use the downstairs core-map factorizations provided by \Cref{lem:factorization}. Choose a vertex in the smallest core and compatible basepoints in the finite domains. The product-base groups form a chain
\[A_k\times B_k\le\cdots\le A_0\times B_0\]
of factor-wise free-factor inclusions: an inclusion of connected graphs induces a free-factor inclusion of fundamental groups. As all graph factors are finite and leafless, their universal covers are the minimal invariant trees of these factor groups. Conversely, equality of lifted factors forces the corresponding embedded graph-factor map to be onto, hence an isomorphism. Thus
\[A_{i+1}=A_i\ \Longleftrightarrow\ T_{i+1,1}=T_{i,1},\qquad B_{i+1}=B_i\ \Longleftrightarrow\ T_{i+1,2}=T_{i,2}. \]
The factor is cyclic exactly when its tree is a line. Applying the proposition gives both directions of the equality implications required by the semi-isomorphism chain condition.
\end{proof}

The additional axioms for complexes of join groups in \cite[Definition~3.13]{Oh22} are not asserted here.

\section{Simplicity and the image construction}\label{sec:simplicitycriterion}

\begin{definition}\label{def:simple}
A compact two-sided weakly special square complex \(Y\) is \emph{simple} if every standard product subcomplex of \(Y\) is represented by an embedding; cf.~\cite[Section~3.2, before Remark~3.12]{Oh22}.
\end{definition}

\subsection{Recovery of the image construction}\label{sec:simplecase}

Assume in this subsection that \(Y\) is simple. Then every standard product subcomplex downstairs is determined by its image.
Let \(\cRI_{\mathrm{img}}(Y)\) be the image construction: its vertices are inclusion-maximal standard product images, with one simplex for each component of a common intersection containing a standard product image. Faces are given by the containing components of intersections over subfamilies.

\begin{theorem}[The simple case]\label{thm:simplecomparison}
There is a natural isomorphism of triangular complexes
\[\cI(Y)\cong\cRI_{\mathrm{img}}(Y).\]
The quotient morphism \(\cubmap{q}\) sends maximal simplices to maximal simplices, and its realization \(|\cubmap{q}|\) is injective on closed simplices. Each relevant intersection component \(Q\) downstairs has a unique largest standard product image \(K_Q\), with inclusion-map label \(\cubmap{k}_Q:K_Q\hookrightarrow Y\). The core map \(\cubmapu{m}_\sigma\) of an upstairs simplex over \(Q\) is an elevation of \(\cubmap{k}_Q\). In particular, \(\univ_Y(\im{\cubmapu{m}_\sigma})=K_Q\), and
\[\Stab_{\pi_1(Y)}(\sigma)\cong\pi_1(K_Q).\]
\end{theorem}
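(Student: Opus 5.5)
We work throughout with the description of \(\cI(Y)\) as the deck quotient \(\cI(\widetilde Y)/G\) from \Cref{thm:mapquotient}, and we translate each piece of quotient data into image data downstairs using simplicity. The first step is to show that, since every standard product subcomplex of \(Y\) is represented by an embedding, the relation \(\preceq\) among standard product subcomplexes of \(Y\) is exactly inclusion of images. Indeed, a factorization \(\cubmap{f}=\cubmap{g}\circ\cubmap{h}\) through embeddings is determined by the images. Conversely, suppose the images of embeddings are nested. Then \(\cubmap{h}=\cubmap{g}^{-1}\circ\cubmap{f}\) is a cubical immersion of products, and by \Cref{lem:productrestriction} it is a product embedding. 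Consequently, maximal product subcomplexes of \(Y\) are the inclusion-maximal standard product images, and vertices of \(\cI(Y)\) match vertices of \(\cRI_{\mathrm{img}}(Y)\).

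Next we treat simplices. Take a nondegenerate maximal diagram with vertex maps the inclusions \(P_i\hookrightarrow Y\) and common map \(\cubmap{f}\). Its image \(\im{\cubmap{f}}\) is a connected standard product image contained in \(\bigcap_iP_i\), so it lies in a unique component \(Q\) of that intersection. To show that \(Q\) has a largest standard product image \(K_Q\), elevate: choose compatible elevations \(\cubmapu{m}_i\) through a vertex over \(Q\). Then \(\univ_Y\) maps \(\bigcap_i\im{\cubmapu{m}_i}\) onto \(Q\). For one direction, apply the downstairs part of \Cref{lem:geometriccores}, which builds \(\Gamma_j\) as the component of \(\bigcap_i\Gamma_{i,j}\); this shows the intersection is the elevation of a product over \(Q\), and its core projects to a standard product image \(K_Q\subseteq Q\). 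For the other direction, any standard product image inside \(Q\) through the chosen vertex lifts into every \(\im{\cubmapu{m}_i}\), so it lies in the core by \Cref{lem:geometriccores}, hence in \(K_Q\). Thus a maximal diagram over \(Q\) is unique, namely the one with common map \(\cubmap{k}_Q\), and \(\cubmapu{m}_\sigma\) is an elevation of \(\cubmap{k}_Q\).

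This is the step I expect to be the main obstacle. Distinct components \(Q\) with the same vertex family must give distinct diagrams, and a single \(Q\) must not give two nonisomorphic diagrams. Both reduce to the fact that a maximal diagram is determined up to isomorphism by which component contains the image of its common map. Because arrows into embedded \(P_i\) are determined by images, this works, but it requires checking that different choices of base vertex in \(Q\) yield deck-equivalent elevation families, using connectedness of \(Q\) and path lifting.

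Faces then match. Forgetting arrows gives the component of the larger intersection containing \(Q\), which is exactly the face rule of \(\cRI_{\mathrm{img}}(Y)\). Nondegeneracy is the same as requiring the \(P_i\) to be distinct. The distinct \(P_i\) also give distinct vertex classes downstairs, so no upstairs simplex has two vertices in one deck orbit; by \Cref{rem:triangularquotient} this yields injectivity of \(|\cubmap{q}|\) on closed simplices. Maximal simplices go to maximal simplices because the quotient is a bijection on simplex data that respects face incidences. For the stabilizer, \(\Stab_G(\sigma)\) permutes the vertices of \(\sigma\), and since these lie in distinct orbits it fixes each of them. Hence it stabilizes each \(\im{\cubmapu{m}_i}\) and their intersection core \(\im{\cubmapu{m}_\sigma}\). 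Conversely, deck transformations preserving \(\im{\cubmapu{m}_\sigma}\) fix the vertex classes, again by distinctness downstairs. Since \(K_Q\) is embedded, the stabilizer of the elevation image of the embedding \(\cubmap{k}_Q\) is \(\pi_1(K_Q)\) by standard covering theory, giving \(\Stab_{\pi_1(Y)}(\sigma)\cong\pi_1(K_Q)\).
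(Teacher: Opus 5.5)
Your proposal is correct and follows essentially the paper's proof: simplicity turns factorization into image inclusion, the intersection of compatible elevations maps onto \(Q\) by path lifting and its core (via \Cref{lem:geometriccores}) projects to the largest standard image \(K_Q\), vertices of an upstairs simplex lie in distinct deck orbits, and \(\Stab_{\pi_1(Y)}(\sigma)\) coincides with the stabilizer of \(\im{\cubmapu{m}_\sigma}\), which covering theory identifies with \(\pi_1(K_Q)\). The differences are minor. You obtain ``maximal to maximal'' from the incidence identification of \Cref{thm:mapquotient}, where the paper lifts a coface by hand; strictly, \(\cubmap{q}\) itself is not a bijection, and what you actually use is that every downstairs face incidence is the orbit of an upstairs one. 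Your base-vertex concern also disappears once you note that \(K_Q\) is intrinsically the largest standard image in \(Q\).
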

\begin{proof}
For embedded products, factorization over \(Y\) is exactly inclusion of their images. Thus map-maximality is image-maximality. Elevation images are components of the preimages of these locally convex embedded products. Elevations of the same embedded product with intersecting images are equivalent.

Let \(\cubmap{m}_0,\ldots,\cubmap{m}_k\) be maximal product subcomplexes of \(Y\) with elevations \(\cubmapu{m}_i\) such that \(\overline{P}=\bigcap_i\im{\cubmapu{m}_i}\) contains a flat, and let \(Q\) be the component of \(\bigcap_i\im{\cubmap{m}_i}\) containing its projection. A path in \(Q\) lifts in every chosen \(\im{\cubmapu{m}_i}\), so \(\overline{P}\) is a component of \(\univ_Y^{-1}(Q)\). By \Cref{lem:geometriccores}, \(\overline{P}\) is the image of an elevation of a product subcomplex \(\Gamma_1\times \Gamma_2\to Y\). The downstairs map has image \(Q\) and factors through \(\cubmap{m}_0\) by \Cref{lem:factorization}. Since \(\cubmap{m}_0\) is an embedding, so is the downstairs map. Identify \(Q\) with \(\Gamma_1\times \Gamma_2\) via this embedding. Pruning its graph factors gives the largest standard image \(K_Q=\Gamma_1^\circ\times \Gamma_2^\circ\). The inclusion map \(\cubmap{k}_Q\) has an elevation representing the core map \(\cubmapu{m}_\sigma\).

Conversely, a component \(Q\) containing a standard product image lifts to such an intersection after choosing a point over its core. Two such lifted simplices over the same component are deck-related: align a core point, then use uniqueness of the image of an elevation of each embedded maximal product subcomplex through that point. Distinct vertices of an upstairs simplex have distinct projected images downstairs, since intersecting images of elevations of the same embedded product coincide. This proves the isomorphism and injectivity on closed simplices.

If a downstairs simplex has a proper coface, lift the smaller coface core inside the existing core lift and lift the additional maximal product subcomplexes through it. This produces a proper coface upstairs. Finally, a deck transformation preserving \(\im{\cubmapu{m}_\sigma}\) fixes each vertex of \(\sigma\), since the corresponding product image and its translate intersect and project to the same embedded image. The simplex and core stabilizers therefore coincide. Covering theory identifies the latter with \(\pi_1(K_Q)\).
\end{proof}

The image construction agrees with \cite[Definition~3.8]{Oh22}: its labels are represented by the inclusion maps of the cores, and face incidences give inclusions of embedded cores.

\begin{corollary}[The simple quotient morphism]\label{thm:geometricquotient}
If \(Y\) is simple, then the quotient morphism \(\cubmap{q}\) preserves the full stabilizer labels and their factor-inclusion data. In particular, it satisfies the semi-morphism chain condition in \cite[Section~3.2, following Definition~3.13]{Oh22}.
\end{corollary}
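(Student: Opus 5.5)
The plan is to derive the corollary from \Cref{thm:simplecomparison} and \Cref{thm:mapquotient}. The only new ingredient is the bookkeeping of stabilizers along face chains.

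\textbf{Step 1: face chains and their stabilizers.} Fix a face chain \(\sigma_0\subset\cdots\subset\sigma_k\) in \(\cI(\widetilde Y)\) and let \(Q_i\) be the corresponding downstairs intersection components. By \Cref{thm:simplecomparison}, each core map \(\cubmapu{m}_{\sigma_i}\) is an elevation of the embedding \(\cubmap{k}_{Q_i}:K_{Q_i}\hookrightarrow Y\). The nested images
\[\im{\cubmapu{m}_{\sigma_k}}\subset\cdots\subset\im{\cubmapu{m}_{\sigma_0}}\]
then project to nested embedded cores \(K_{Q_k}\subset\cdots\subset K_{Q_0}\). Choose a vertex in the smallest core and use it as a common basepoint. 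Then the identifications
\[\Stab_{\pi_1(Y)}(\sigma_i)\cong\pi_1(K_{Q_i})\]
from \Cref{thm:simplecomparison} can be made simultaneously. Each such identification goes through the stabilizer of the core image, and that stabilizer equals the simplex stabilizer.

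\textbf{Step 2: compatibility with inclusions.} Next check that these identifications are compatible with inclusions. The inclusion \(\Stab(\sigma_{i+1})\le\Stab(\sigma_i)\) of deck groups should correspond to the map \(\pi_1(K_{Q_{i+1}})\to\pi_1(K_{Q_i})\) induced by the embedding \(K_{Q_{i+1}}\hookrightarrow K_{Q_i}\). This follows from covering theory once basepoint lifts are fixed in the smallest core. That embedding is a product of graph embeddings \(\Gamma_{i+1,r}^\circ\hookrightarrow\Gamma_{i,r}^\circ\). So the inclusions are factorwise free-factor inclusions \(A_{i+1}\times B_{i+1}\le A_i\times B_i\), exactly as in the proof of \Cref{thm:geometricisomorphism}. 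Hence \(\cubmap{q}\) carries the full stabilizer labels upstairs, together with their factor-inclusion data, to the product-base group labels \(\pi_1(K_Q)\) of \(\cI(Y)\cong\cRI_{\mathrm{img}}(Y)\). Here I use that \(\cubmap{q}\) identifies labels by elevation, as in \Cref{thm:mapquotient}.

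\textbf{Step 3: the semi-morphism chain condition.} The condition requires two things along each chain. First, factor equalities must be preserved in both directions: \(A_{i+1}=A_i\) if and only if the corresponding downstairs factor is equal, and likewise for \(B\). Second, cyclicity must be preserved. Since \(\cubmap{q}\) is the identity on groups after the identifications of Step 1, both are automatic. Equality of lifted tree factors corresponds to equality of the groups, as shown in the proof of \Cref{thm:geometricisomorphism}. A factor is cyclic exactly when its tree is a line, and this is unchanged by passing to the quotient.

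\textbf{Main obstacle.} The hard part is Step 1: choosing basepoints coherently along the whole chain so that the isomorphisms commute strictly, not just up to conjugation. I would handle this by working in the smallest core lift throughout. I would also invoke the nondegeneracy and vertex-fixing part of \Cref{thm:simplecomparison}: under simplicity, an element stabilizing the core fixes every vertex. This rules out stabilizer elements that permute vertices, which would otherwise break the identification of stabilizers with product-base groups.
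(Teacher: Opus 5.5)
Your proposal is correct and follows essentially the paper's argument: you identify full stabilizers with product-base groups via \Cref{thm:simplecomparison}, fix a basepoint lift in the smallest core of the chain so that all identifications are made simultaneously, and observe that the nested factor-subgroup sequences upstairs and downstairs coincide, which yields the chain condition. Your extra remarks make explicit what the paper's short proof leaves implicit. These are the vertex-fixing property under simplicity, which gives $\Stab(\sigma_{i+1})\le\Stab(\sigma_i)$, and the factorwise free-factor inclusions taken from the proof of \Cref{thm:geometricisomorphism}.
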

\begin{proof}
By \Cref{thm:simplecomparison}, the full stabilizer labels identify with the product-base groups. Choose a basepoint in the smallest core of a face chain and a lift of that basepoint. The factorization diagrams in \Cref{thm:mapquotient} then give identical nested factor-subgroup sequences upstairs and downstairs, which imply the asserted chain condition.
\end{proof}

\subsection{A criterion and examples}

\begin{lemma}[Simplicity under subcomplexes]\label{lem:simplesubcomplex}
Every connected two-dimensional cubical subcomplex of a simple square complex is simple.
\end{lemma}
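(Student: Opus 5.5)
The plan is to check the two requirements of \Cref{def:simple} for a connected two-dimensional cubical subcomplex \(Z\subseteq Y\): first that \(Z\) is compact, two-sided and weakly special, and then that each of its standard product subcomplexes is represented by an embedding. The first requirement is immediate. \(Z\) is compact because it is a subcomplex of the compact complex \(Y\), and \Cref{lem:weaklyspecialstability} shows it is two-sided weakly special. The two-dimensionality assumption is there only so that \(Z\) is an honest square complex, and hence the kind of complex to which the definition applies.

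For the second requirement, take a standard product subcomplex \(\cubmap{f}:\Gamma_1\times\Gamma_2\to Z\). The graphs \(\Gamma_j\) are leafless, and \(\cubmap{f}\) is a local isometry whose coordinate fibers embed. Let \(\iota:Z\hookrightarrow Y\) be the inclusion and set \(\cubmap{f}'=\iota\circ\cubmap{f}\). This map is cubical, and it is an immersion because \(\iota\) is injective on vertex links. By \Cref{lem:productrestriction}, \(\cubmap{f}'\) is therefore a local isometry into \(Y\). Its coordinate fibers are still embedded, because \(\iota\) is injective. Its factors are unchanged and so remain leafless. Hence \(\cubmap{f}'\) is a standard product subcomplex of \(Y\).

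Simplicity of \(Y\) now gives an embedding \(\cubmap{g}:\Lambda_1\times\Lambda_2\hookrightarrow Y\) together with a product isomorphism \(\cubmap{h}\) satisfying \(\cubmap{f}'=\cubmap{g}\circ\cubmap{h}\). The composition \(\cubmap{g}\circ\cubmap{h}\) is injective, so \(\cubmap{f}'\) is injective, and therefore \(\cubmap{f}\) is injective as well. Thus \(\cubmap{f}\) is itself an embedding, and it represents its own class in \(Z\). This is exactly the representation by an embedding that \Cref{def:simple} asks for.

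The only step that needs care is the claim that a local isometry into \(Z\), composed with the inclusion, stays a local isometry into \(Y\). \(Z\) need not be a full subcomplex, so a naive argument about full link images could fail. \Cref{lem:productrestriction} removes this problem: it requires only a cubical immersion from a product of graphs into an NPC square complex, and that is exactly what \(\cubmap{f}'\) is. The remaining subtlety is that \(\sim\)-equivalence in \(Y\) and in \(Z\) might seem to differ. This causes no trouble, because the conclusion we reach is that \(\cubmap{f}\) is literally injective, which involves no equivalence at all.
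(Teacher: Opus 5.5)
Your proposal is correct and follows the paper's proof essentially verbatim. Both arguments get compactness and two-sided weak specialness from \Cref{lem:weaklyspecialstability}, compose with the inclusion to obtain a cubical immersion into \(Y\) (hence a local isometry by \Cref{lem:productrestriction}, with embedded coordinate fibers), and then use simplicity of \(Y\) to conclude that the composite, and therefore \(\cubmap{f}\), is an embedding.
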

\begin{proof}
Let \(K\subset Y\) be such a subcomplex. It is compact, and \Cref{lem:weaklyspecialstability} makes it two-sided weakly special. For a standard product subcomplex \(\cubmap{f}:\Gamma_1\times\Gamma_2\to K\), the composite into \(Y\) is a cubical immersion, hence a local isometry by \Cref{lem:productrestriction}, and its coordinate fibers embed. Simplicity of \(Y\) makes this composite, and hence \(\cubmap{f}\), an embedding.
\end{proof}

The simplicity hypothesis can be checked using the intersections of hyperplanes. We say that a square complex has \emph{unique crossings} if any two distinct hyperplanes cross in at most one square. Here a crossing is counted by its square, regardless of identifications among the vertices of that square.

\begin{proposition}[Unique crossing criterion]\label{prop:uniquecrossings}
Let \(X\) be a connected special square complex with unique crossings. Then every local isometry \(\cubmap{f}:\Gamma_1\times\Gamma_2\to X\) with embedded coordinate fibers is an embedding. In particular, if \(X\) is also compact and weakly special, then it is simple.
\end{proposition}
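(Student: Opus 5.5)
We need to show that a local isometry \(\cubmap{f}:\Gamma_1\times\Gamma_2\to X\) with embedded coordinate fibers is injective. Suppose two points of the domain have the same image. Since \(\cubmap{f}\) is cubical, it suffices to rule out two distinct vertices \((a_1,a_2)\ne(b_1,b_2)\) with equal images; a non-injectivity on open cells forces one at vertices. The fiber embeddings already exclude \(a_1=b_1\) or \(a_2=b_2\). So assume \(a_1\ne b_1\) and \(a_2\ne b_2\).

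The main tool will be specialness. Every hyperplane of \(\Gamma_1\times\Gamma_2\) has the form \(e\times\Gamma_2\) or \(\Gamma_1\times e'\) for an edge \(e\) of one factor. A local isometry maps hyperplanes into hyperplanes. Since \(X\) is special, it has a local isometry into a Salvetti complex, so distinct edges at a common vertex in \(X\) are dual to distinct hyperplanes and hyperplanes do not self-cross. I would first show that \(\cubmap{f}\) induces an injective map on hyperplanes. Take edges \(e\ne\tilde e\) of \(\Gamma_1\) and pick any edge \(e'\) of \(\Gamma_2\). The hyperplanes \(\cubmap{f}(e\times\Gamma_2)\) and \(\cubmap{f}(\Gamma_1\times e')\) cross in the image of the square \(e\times e'\). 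The same holds for \(\tilde e\) and the square \(\tilde e\times e'\).

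If \(e\) and \(\tilde e\) mapped to the same hyperplane \(H\), then \(H\) and \(V=\cubmap{f}(\Gamma_1\times e')\) would cross in the squares \(\cubmap{f}(e\times e')\) and \(\cubmap{f}(\tilde e\times e')\). These squares are distinct, because the fiber \(\Gamma_1\times\{x\}\) embeds and \(e\ne\tilde e\) gives distinct edges \(\cubmap{f}(e\times x)\ne\cubmap{f}(\tilde e\times x)\). This contradicts unique crossings, provided \(H\ne V\). That last condition holds because the two directions of a square map to crossing hyperplanes and hyperplanes of \(X\) do not self-intersect.

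Next, I would join \((a_1,a_2)\) to \((b_1,b_2)\) by a path running first along the fiber \(\Gamma_1\times\{a_2\}\) and then along \(\{b_1\}\times\Gamma_2\). I would then argue via the Salvetti/hyperplane-orientation structure that the image is a closed path in \(X\). The difficulty is that a closed path may cross a hyperplane an odd number of times without contradiction, since \(X\) is not \(\CAT\).

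The step I expect to be hardest is turning hyperplane injectivity into vertex injectivity. My first attempt would pass to the universal cover. Take an elevation \(\cubmapu{f}\), which is an embedding with convex image. If \(\cubmap{f}(a)=\cubmap{f}(b)\), lifts \(\tilde a,\tilde b\) in \(\widetilde{\Gamma_1}\times\widetilde{\Gamma_2}\) give \(\cubmapu{f}(\tilde b)=g\,\cubmapu{f}(\tilde a)\) for a deck element \(g\). I would then show that \(g\) stabilizes \(\im{\cubmapu{f}}\), which would give \(a=b\).

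To get this, I would use two facts. The hyperplanes of \(X\) meeting \(\cubmap{f}(\text{star of }a)\) are exactly those meeting the star of \(b\), by injectivity and by specialness at a vertex. Unique crossings then force the squares at that common vertex from the two product structures to coincide. Hence \(g\) maps the local product chart at \(\cubmapu{f}(\tilde a)\) onto the chart at \(\cubmapu{f}(\tilde b)\). Since a local isometry from a connected product is determined by its germ at one vertex, \(g\circ\cubmapu{f}\) and \(\cubmapu{f}\circ\theta\) agree for a product automorphism \(\theta\). This yields \(a=b\).

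The final claim, that a compact weakly special \(X\) with these properties is simple, then follows directly from \Cref{def:simple}.
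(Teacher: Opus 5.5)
There is a genuine gap at the step you flag as hardest. Your first ``fact'', that the hyperplanes through \(\cubmap{f}(\mathrm{star}(a))\) are exactly those through \(\cubmap{f}(\mathrm{star}(b))\), is asserted without proof. Combined with your hyperplane injectivity, it would mean that \(a_1\) and \(b_1\) have the same incident edges in \(\Gamma_1\), and likewise \(a_2,b_2\). Nothing at the link of \(v=\cubmap{f}(a)=\cubmap{f}(b)\) tells you this.

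What specialness actually gives at \(v\) is weaker. A first-factor germ at \(a\) and a second-factor germ at \(b\) have hyperplanes that already cross in the image of a product square. Since inter-osculation is excluded, these two germs span a square at \(v\). So the sentence about unique crossings forcing ``the squares from the two product structures'' to coincide has no precise content, and you have not shown that \(g\) stabilizes \(\im{\cubmapu{f}}\).

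Even granting \(g\circ\cubmapu{f}=\cubmapu{f}\circ\theta\) with \(\theta(\tilde a)=\tilde b\), the conclusion \(a=b\) needs a further argument. In \Cref{ex:diagonaltorus}, the twofold covering \(C_6\times C_6\to Y\) has embedded coordinate fibers, and every deck transformation stabilizes its elevation image, yet the map is two-to-one. You would have to show that \(\theta\) is a deck transformation of \(\widetilde{\Gamma_1}\times\widetilde{\Gamma_2}\to\Gamma_1\times\Gamma_2\). This can be done with hyperplane injectivity and coorientation signs, but the proposal does not do it. Your injectivity step has a similar small hole: the distinct edges \(\cubmap{f}(e\times\{x\})\ne\cubmap{f}(\tilde e\times\{x\})\) could a priori be opposite sides of one square, and ruling this out also needs coorientations.

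The missing idea is a short local argument that needs neither the universal cover nor hyperplane injectivity.
\begin{itemize}
\item Fix coorientations. Propagating across product squares, for each germ \(\alpha\) of \(\Gamma_1\) all germs \(\cubmap{f}(\alpha\times\{y\})\) are dual to one hyperplane with the same sign. The same holds for germs of \(\Gamma_2\).
\item Suppose \(\cubmap{f}(a_1,a_2)=\cubmap{f}(b_1,b_2)=v\). Take a germ \(\alpha\) at \(a_1\) and a germ \(\beta\) at \(b_2\). Their hyperplanes cross in \(\cubmap{f}(\alpha\times\beta)\), whose domain corner is \((a_1,b_2)\).
\item The germs \(\cubmap{f}(\alpha\times\{a_2\})\) and \(\cubmap{f}(\{b_1\}\times\beta)\) meet at \(v\). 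Since inter-osculation is excluded, they span a square, and unique crossings identify it with \(\cubmap{f}(\alpha\times\beta)\).
\item The two signs pick out a unique corner of its characteristic square, even if vertices are identified in \(X\). Hence \(\cubmap{f}(a_1,b_2)=v\).
\item Injectivity on \(\{a_1\}\times\Gamma_2\) gives \(b_2=a_2\), and then injectivity on \(\Gamma_1\times\{a_2\}\) gives \(b_1=a_1\).
\end{itemize}
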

\begin{proof}
Let \(\cubmap{f}:\Gamma_1\times\Gamma_2\to X\) be such a local isometry, and choose coorientations of the hyperplanes of \(X\). Fix an edge germ \(\alpha\) of \(\Gamma_1\). For each vertex \(b\) of \(\Gamma_2\), consider the germ \(\cubmap{f}(\alpha\times\{b\})\). These germs are dual to the same hyperplane and have the same sign relative to its coorientation. Indeed, this holds across each product square and hence along every edge path in \(\Gamma_2\). The analogous assertion holds for the other factor.

Let \(a,a'\) be vertices of \(\Gamma_1\) and \(b,b'\) vertices of \(\Gamma_2\) such that \(\cubmap{f}(a,b)=\cubmap{f}(a',b')=v\) for a vertex \(v\) of \(X\). Choose an edge germ \(\alpha\) at \(a\) and an edge germ \(\beta\) at \(b'\), and let \(H\) and \(K\) be the corresponding hyperplanes of \(X\). These hyperplanes cross in the square \(\cubmap{f}(\alpha\times\beta)\). The germs \(\cubmap{f}(\alpha\times\{b\})\) and \(\cubmap{f}(\{a'\}\times\beta)\) meet at \(v\). Since \(X\) has no inter-osculation, they bound a square. Unique crossings imply that this is the same square as \(\cubmap{f}(\alpha\times\beta)\).

At the corner \((a,b')\), the two germs of the latter square have the same coorientation signs as the chosen germs at \(v\). These signs specify a unique corner of the characteristic square, even when some of its vertices are identified in \(X\).
Consequently \(\cubmap{f}(a,b')=v\). Injectivity on \(\{a\}\times \Gamma_2\) gives \(b=b'\), and injectivity on \(\Gamma_1\times\{b\}\) then gives \(a=a'\). Thus \(\cubmap{f}\) is injective on vertices. A local isometry which is injective on vertices is an embedding.
\end{proof}

For a finite graph \(\Lambda\), let \(D_2(\Lambda)\) and \(UD_2(\Lambda)\) denote its ordered and unordered discrete two-particle configuration spaces, respectively.

\begin{corollary}\label{cor:simplesalvetti}\label{cor:simplegraphbraids}
Two-dimensional Salvetti complexes and the two-dimensional connected components of \(D_2(\Lambda)\) and \(UD_2(\Lambda)\) are simple, as are their two-dimensional connected cubical subcomplexes.
\end{corollary}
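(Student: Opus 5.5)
The plan is to reduce everything to \Cref{prop:uniquecrossings} together with \Cref{lem:simplesubcomplex}. For each family I will check four things: the complex is compact and connected, it is two-sided weakly special, it is special, and it has unique crossings. Once these hold, \Cref{prop:uniquecrossings} gives simplicity. \Cref{lem:simplesubcomplex} then passes simplicity to two-dimensional connected cubical subcomplexes.

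\textbf{Salvetti complexes.} Let \(S_\Delta\) be the Salvetti complex of a graph \(\Delta\) with no triangles, so that it is two-dimensional. It has one vertex. Its hyperplanes correspond to the vertices of \(\Delta\), and each is two-sided. There are no self-intersections, since a square is labelled by two distinct adjacent generators. Self-osculation in the convention of \cite[Definition~5.3]{Hua17} only concerns distinct unoriented edges, and each generator gives exactly one edge, so none occurs. Specialness holds because the identity is a local isometry into a Salvetti complex. Unique crossings holds because hyperplanes \(H_a,H_b\) cross exactly in the single square labelled by the edge \(\{a,b\}\) of \(\Delta\).

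\textbf{Configuration spaces.} For \(D_2(\Lambda)\) and \(UD_2(\Lambda)\), I will use the standard fact, due to Abrams and Crisp--Wiest, that these complexes are NPC and that there is a local isometry from each into a Salvetti complex. This gives specialness, and in particular two-sidedness and the absence of self-intersection and direct self-osculation. For weak specialness I still need that distinct edges at a vertex are dual to distinct hyperplanes. In \(UD_2\), an edge at \(\{x,y\}\) moves one particle along an edge \(e\) of \(\Lambda\) while the other stays at a vertex. I will argue that the hyperplane of such an edge remembers both \(e\) and the fixed vertex's connected position relative to \(e\). Two distinct edges at one vertex either move different particles or move one particle along different edges, so they are dual to different hyperplanes.

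For unique crossings, a square in \(UD_2(\Lambda)\) corresponds to a pair of disjoint edges \(\{e,f\}\) of \(\Lambda\). The two hyperplanes through that square are labelled by \(e\) with the other particle on \(f\), and by \(f\) with the other particle on \(e\). A second square crossed by the same pair of hyperplanes would again be given by \(\{e,f\}\), so the two squares coincide. The ordered case is identical, with ordered pairs \((e,f)\).

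\textbf{Main obstacle.} The delicate step is making the hyperplane labels precise. A hyperplane in these spaces is a connected component, so its dual edges may move along \(e\) while the other particle sits at different vertices. I must check that the crossing pair still determines the square, and not merely the label pair. To handle this, I will note that a square dual to \(H_e\) and \(H_f\) has its moving particles on \(e\) and on \(f\), and its vertex set is \(\partial e\times\partial f\). Hence the square is determined by \(e\) and \(f\) alone, and unique crossings follows.
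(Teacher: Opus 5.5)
Your proposal is correct and follows essentially the paper's route. The paper labels edges by generators, by the moved edge, or by particle together with moved edge. These labels are hyperplane invariants that distinguish the edges at each vertex, which gives weak specialness. A pair of crossing labels determines a unique square, which gives unique crossings, and \Cref{prop:uniquecrossings} and \Cref{lem:simplesubcomplex} finish.

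The only difference is that you import specialness of the configuration spaces from Abrams and Crisp--Wiest. The paper instead reads off the absence of inter-osculation from the same labels: two edges meeting at a vertex whose labels are disjoint edges of \(\Lambda\) always span a square. Also, your step ``different particles give different hyperplanes'' in \(UD_2(\Lambda)\) tacitly uses that the two moved edges are distinct. This holds because an edge joining the two occupied vertices cannot be traversed.
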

\begin{proof}
Label Salvetti edges by generators, edges of \(UD_2(\Lambda)\) by the moved edge of \(\Lambda\), and edges of \(D_2(\Lambda)\) by the particle coordinate together with the moved edge. Labels are constant on edges dual to a hyperplane and distinguish distinct edges incident to a vertex. Thus these complexes are weakly special, and compatible edge orientations make them two-sided.

Two crossing labels determine a unique square: a commutation square in the Salvetti case, or the square in which the two particles move along the specified disjoint edges. Whenever edges with these labels meet, the same commutation or disjointness condition supplies that square. Hence the complexes are special and have unique crossings. Apply \Cref{prop:uniquecrossings,lem:simplesubcomplex}.
\end{proof}

Let \(\Gamma\) be a finite triangle-free simplicial graph with vertex set \(V\). The \emph{commutator complex} of the right-angled Coxeter group \(W_\Gamma\) is the cubical subcomplex
\[P_\Gamma=\bigcup_{\sigma\text{ a clique of }\Gamma} [0,1]^\sigma\times\{0,1\}^{V\setminus\sigma} \subset [0,1]^V.\]
Its fundamental group is \([W_\Gamma,W_\Gamma]\), and its universal cover is the Davis complex \(\Sigma_\Gamma\); see \cite[Section~2.6]{CDEK25}.

\begin{proposition}\label{prop:simplecommutator}
The commutator complex \(P_\Gamma\) is special and has unique crossings. In particular, it is simple, as are its two-dimensional connected cubical subcomplexes.
\end{proposition}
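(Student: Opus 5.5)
The plan is to establish the three properties needed to invoke \Cref{prop:uniquecrossings}, and then to appeal to \Cref{lem:simplesubcomplex} for subcomplexes. The three properties are that \(P_\Gamma\) is two-sided weakly special, that it is special, and that it has unique crossings. Everything rests on the explicit hyperplane structure of \(P_\Gamma\subset[0,1]^V\).

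For each \(v\in V\), the edges parallel to the \(v\)-th coordinate direction are the edges of \(P_\Gamma\) on which only the \(v\)-coordinate varies. I will first show that the hyperplanes of \(P_\Gamma\) are exactly the components of the intersections of \(P_\Gamma\) with the coordinate hyperplanes \(\{x_v=\tfrac12\}\). Every hyperplane dual to such an edge lies in one of these sets, so it suffices to check that these intersections are connected when nonempty. This should not even be needed: the argument below only uses that dual edges share a direction label. Labeling every edge by its direction \(v\) therefore gives a labeling that is constant on each hyperplane.

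The three properties then follow from this labeling.

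\emph{Weak specialness.} At a vertex \(x\in\{0,1\}^V\), the edges are the segments varying a single coordinate \(v\). There is exactly one such edge per \(v\), namely the segment from \(x\) toward the flipped coordinate. Distinct incident edges therefore carry distinct labels, so they are dual to distinct hyperplanes. This rules out self-osculation.

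\emph{No self-intersection.} A square varies two distinct coordinates \(v\ne w\), so its two dual hyperplanes have distinct labels and cannot be the same hyperplane.

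\emph{Two-sidedness.} Orienting every edge in the increasing \(v\)-direction is preserved across squares, so the orientation is constant on each hyperplane.

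\emph{Specialness.} Since the complex is two-sided weakly special, it remains to exclude inter-osculation. Suppose hyperplanes with labels \(v\) and \(w\) cross. Then \(v,w\) span an edge of \(\Gamma\). At any vertex \(x\) where edges labeled \(v\) and \(w\) meet, the square \([0,1]^{\{v,w\}}\times\{x|_{V\setminus\{v,w\}}\}\) lies in \(P_\Gamma\), because \(\{v,w\}\) is a clique. So these edges span a square, and no inter-osculation occurs. (This uses only the labels, even if the crossing hyperplanes are particular components.)

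\emph{Unique crossings.} This is the step I expect to need the most care. Suppose distinct hyperplanes \(H,K\) cross in two squares \(S,S'\). Both squares must lie in direction \(\{v,w\}\), where \(v,w\) are the labels. Such a square is determined by the remaining coordinates \(y\in\{0,1\}^{V\setminus\{v,w\}}\). A square in direction \(\{v,w\}\) is dual to the component of \(\{x_v=\tfrac12\}\cap P_\Gamma\) containing its midcube. To separate squares with different \(y\), I need to control these components. I will compute the component of \(\{x_v=\tfrac12\}\cap P_\Gamma\) directly. It is the subcomplex of midcubes of cubes \([0,1]^\sigma\) with \(v\in\sigma\). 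Because \(\Gamma\) is triangle-free, these cubes have \(\sigma=\{v\}\) or \(\sigma=\{v,u\}\) with \(u\) adjacent to \(v\). Moving within this set therefore changes only coordinates in the link \(\operatorname{lk}(v)\). Hence the coordinates outside \(\operatorname{st}(v)\) are constant on the component, and all link coordinates vary freely. The same holds for \(K\) with \(w\). On \(S\cap H\), the coordinates outside \(\operatorname{st}(v)\) are fixed, and on \(S\cap K\) those outside \(\operatorname{st}(w)\) are fixed. Triangle-freeness gives \(\operatorname{st}(v)\cap\operatorname{st}(w)=\{v,w\}\), so every coordinate in \(V\setminus\{v,w\}\) is fixed by one of the two. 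Thus \(y\) is determined by \(H\) and \(K\), and \(S=S'\).

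With these properties in hand, \Cref{prop:uniquecrossings} gives simplicity, since \(P_\Gamma\) is compact. \Cref{lem:simplesubcomplex} then handles the two-dimensional connected cubical subcomplexes.
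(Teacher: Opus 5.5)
Your proof is correct and follows the paper's argument: coordinate labels give weak specialness, and constancy of each hyperplane's coordinates outside \(\operatorname{st}_\Gamma(v)\), combined with \(\operatorname{st}_\Gamma(v)\cap\operatorname{st}_\Gamma(w)=\{v,w\}\), gives unique crossings before you invoke \Cref{prop:uniquecrossings} and \Cref{lem:simplesubcomplex}. The only difference is that you exclude inter-osculation directly, whereas the paper exhibits the local isometry \(P_\Gamma\to S(\Gamma)\). You should also drop the aside about checking that \(\{x_v=\tfrac12\}\cap P_\Gamma\) is connected: that claim is false whenever \(\operatorname{st}_\Gamma(v)\ne V\), as your own star computation shows, but as you note it is never used.
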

\begin{proof}
Orient each edge in the direction of increasing coordinate and label it by that coordinate. The resulting map to the Salvetti complex \(S(\Gamma)\) is a local isometry: at each vertex, its link image is the full subcomplex determined by one signed direction for each vertex of \(\Gamma\). Thus \(P_\Gamma\) is special. Each vertex has at most one incident edge of each coordinate label, so \(P_\Gamma\) is also weakly special.

A hyperplane labeled \(s\) has constant coordinates outside the star \(\operatorname{st}_\Gamma(s)\) of \(s\) in \(\Gamma\), since an edge in that hyperplane can vary only a coordinate adjacent to \(s\). Suppose that hyperplanes labeled \(s\) and \(t\) cross. Then \(s\) and \(t\) are adjacent, and triangle-freeness gives
\[\operatorname{st}_\Gamma(s)\cap\operatorname{st}_\Gamma(t)=\{s,t\}.\]
Consequently, every coordinate other than \(s,t\) is fixed by at least one of the two hyperplanes. These coordinates determine a unique square with varying coordinates \(s,t\). The conclusion follows from \Cref{prop:uniquecrossings,lem:simplesubcomplex}.
\end{proof}

\begin{remark}\label{rmk:raagedy}
The Salvetti and commutator complexes considered in \cite[Section~2.6]{CDEK25} are therefore simple. The quasi-isometry correspondence and the image--orbit comparison established here apply to these models. For the commutator complex, the latter comparison reads
\[\cI(\Sigma_\Gamma)/[W_\Gamma,W_\Gamma]\cong \cI(P_\Gamma).\]
The quotient by the full Coxeter group \(W_\Gamma\) is a further quotient and should be distinguished from this deck-orbit quotient.
\end{remark}

Specialness alone does not imply simplicity. The next example also shows that simplicity is preserved by neither finite covers nor finite quotients.

\begin{example}[A non-simple special square torus]\label{ex:diagonaltorus}
Let \(C_6\) be a six-edge cycle and let \(r\) rotate it by three edges. Set
\[Y=\bigl(C_6\times C_6\bigr)/\langle(r,r)\rangle.\]
The quotient map \(\frq:C_6\times C_6\to Y\) is a twofold cubical covering. Its restriction to each coordinate fiber is an embedding: the nontrivial diagonal translate changes the other coordinate. Thus \(\frq\) represents a standard product subcomplex but is not an embedding, and \(Y\) is not simple.

On the other hand, \(Y\) is special. Indeed, quotienting separately in the two factors gives cubical coverings
\[
\begin{tikzcd}[column sep=large]
C_6\times C_6 \arrow[r,"\frq","2:1"'] & Y \arrow[r,"\pi","2:1"'] & C_3\times C_3.
\end{tikzcd}
\]
The base \(C_3\times C_3\) is two-sided weakly special and has no loop edges, so \Cref{lem:weaklyspecialstability} applies to \(Y\). Moreover, \(Y\) is special by \cite[Corollary~3.8]{HW08}. Equivalently, 
\[Y=\mathbb R^2/\langle(6,0),(3,3)\rangle\]
with its unit-square cubulation. Its universal cover is the whole product \(\mathbb R\times\mathbb R\), so the failure already occurs for a maximal product subcomplex.

Both endpoint complexes in the diagram have unique crossings and are simple. Thus simplicity is neither preserved under passage to finite covers nor forced by the existence of a simple finite cover.
\end{example}

\section{Counterexamples without simplicity}\label{sec:counterexamples}

Standard product images correspond to the downstairs notion in \cite[Definition~2.10]{Oh22}. An \emph{image-maximal \(p\)-lift} is an embedded product upstairs that projects onto a fixed downstairs product image and is inclusion-maximal among products with that projection, following the introductory description in \cite[Section~1, preceding Theorem~B]{Oh22}.

\begin{theorem}\label{thm:counterexamples}
There is a compact two-sided weakly special square complex \(X\) with the following properties.
\begin{enumerate}
\item There are two maximal product subcomplexes with image \(X\) whose domains have fundamental groups isomorphic to \(\FF_2\times\FF_4\) and \(\FF_3\times\FF_3\), respectively. Their elevations are maximal product subcomplexes in different deck orbits.
\item The image of another maximal product subcomplex is a proper subcomplex of \(X\), hence is not maximal among the standard product images in the sense of \cite[Definition~2.10]{Oh22}.
\item Two maximal product subcomplexes of \(\widetilde X\) have images whose intersection is the image of a core map \(\cubmapu{w}\), but is properly contained in another product image with the same projection. No image-maximal \(p\)-lift contained in \(\im{\cubmapu{w}}\) is at finite Hausdorff distance from \(\im{\cubmapu{w}}\).
\item An edge in \(\cI(\widetilde X)\) descends to a loop edge in its deck quotient.
\end{enumerate}
There is also a compact special square complex \(Y_0\) whose universal cover is a product of two trees and whose fundamental group has an index-four subgroup \(\FF_5\times\FF_5\), but is not isomorphic to \(\FF_m\times\FF_n\) for any \(m,n\ge1\).
\end{theorem}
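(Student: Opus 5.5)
The plan is to build one explicit finite complex \(X\) as a union of products of finite leafless graphs, glued so that a single square complex carries several incompatible product presentations. A natural template is a complex whose universal cover is itself a product of trees \(T\times T'\), built as a quotient \((\Gamma\times\Gamma')/H\) or as a VH-complex in the sense of Wise. In such a complex, product presentations correspond to finite-index subgroups of \(\pi_1(X)\) that split compatibly with the factor directions. For (1), I would take a VH-complex \(X\) with one vertex class pattern and two finite covers of product type. One cover is \(\Gamma_1\times\Gamma_2\) with Euler characteristics \(-1,-3\), giving \(\FF_2\times\FF_4\). The other is \(\Lambda_1\times\Lambda_2\) with characteristics \(-2,-2\), giving \(\FF_3\times\FF_3\). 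Both must have the same total number of squares, \(\chi\) product \(=4\) times the square count ratio, so that both map onto \(X\) with the same degree.

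Checking that these coverings have embedded coordinate fibers is a local verification on the finite gluing data. The coordinate-fiber embedding requirement of \Cref{def:products} is exactly that no nontrivial deck element of the cover preserves a fiber. Maximality follows from \Cref{prop:maporbits}: each elevation has image all of \(\widetilde X=T\times T'\), so it is trivially maximal upstairs. They lie in different deck orbits because, by \Cref{lem:factorization}, equivalent elevations descend to equivalent maps. Such maps would then have isomorphic product-base groups, contradicting \(\FF_2\times\FF_4\not\cong\FF_3\times\FF_3\). However, if \(\widetilde X\) is a single product, every elevation has the same image and the classes are distinguished only as maps, which is fine under the map-based definition.

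For (2)--(4) I would enlarge \(X\). Glue onto a product piece \(P\) a second product piece \(P'\) along a subproduct, so that \(\widetilde X\) is no longer a single product. Arrange that some maximal product subcomplex \(\cubmap{m}\) downstairs has a proper image, while an elevation meets a translate of itself or of another maximal elevation in a flat. This gives an edge of \(\cI(\widetilde X)\) whose endpoints lie in the same deck orbit, hence a loop edge in the quotient, yielding (4) via \Cref{thm:mapquotient}. For (3), I would compute the intersection core \(\im{\cubmapu{w}}\) with \Cref{lem:geometriccores} and exhibit a larger product with the same projection via \Cref{lem:completion}. To see that image-maximal \(p\)-lifts inside it are Hausdorff-far, I would show that any such lift is a product of proper leafless subtrees. \Cref{lem:coarse} and the argument of \Cref{prop:factorchaindata} then show that distinct leafless subtrees are at infinite Hausdorff distance.

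For \(Y_0\), I would take a Burger--Mozes/Wise-type irreducible-free but non-product lattice. Concretely, take \(\Gamma\times\Gamma\) with \(\Gamma\) a finite graph with \(\pi_1=\FF_5\), and quotient by a free \(\ZZ/2\times\ZZ/2\) action swapping and twisting factors so that the quotient stays special. Specialness is checked via \Cref{lem:weaklyspecialstability} and \cite[Corollary~3.8]{HW08}, as in \Cref{ex:diagonaltorus}. To show \(\pi_1(Y_0)\not\cong\FF_m\times\FF_n\), I would use torsion-freeness together with a centralizer or virtual-product argument. If \(\pi_1(Y_0)=\FF_m\times\FF_n\), its action on \(T\times T\) would preserve the factors after passing to the factor-preserving subgroup. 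Then the index-four subgroup computation via Euler characteristic \(16/4=4=(1-m)(1-n)\) forces \((m,n)\in\{(2,5),(3,3),(5,2)\}\). Each case would then be excluded by the presence of a factor-swapping element, or by abelianization. I expect this last nonisomorphism step, and designing the explicit gluing for (3), to be the main obstacles.
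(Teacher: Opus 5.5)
Your construction for (1) cannot work. If \(\widetilde X=T\times T'\) is a single product and both \(\Gamma_1\times\Gamma_2\to X\) and \(\Lambda_1\times\Lambda_2\to X\) are coverings, then elevations of both have image all of \(\widetilde X\). By \Cref{lem:factorization}, equal elevation images force the factorization to be a product isomorphism, so the two maps are equivalent and their domains are isomorphic.

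You can also see this directly. The two coordinate fibers of \(\widetilde X\) through a point project onto embedded copies of the domain factors of any covering with embedded coordinate fibers, so \(\{\Gamma_1,\Gamma_2\}\cong\{\Lambda_1,\Lambda_2\}\). Hence at least one of your two covers must fail the embedded-fiber condition.

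Your remark that the classes are then ``distinguished only as maps'' is backwards. Product subcomplexes of a \(\CAT\) complex are determined by their images. So \(\widetilde X\) has the single maximal class \([\mathrm{id}]\), there is one deck orbit, and \(\cI(X)\) is one vertex, exactly as for the torus in \Cref{ex:diagonaltorus}. Your Euler-characteristic bookkeeping is also off: \(\chi\)-products \(3\) and \(4\) force degrees \(3\) and \(4\), not equal degrees.

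The missing idea is that \(\widetilde X\) must not be a product. You need two maximal products upstairs with different images, not deck-related, each projecting onto all of \(X\). The paper achieves this by deleting the same-index petal squares \(\Delta_a\times\Delta_a\) from \(Y_0=(\Theta\times\Theta)/\diag(G)\). The forbidden squares make every \(\Theta_I\times\Theta_{I^c}\) maximal, and the difference set \(J-I\) of \Cref{lem:difference} determines the image. Then \(I=\{0\}\) and \(I=\{0,1\}\) give (1), and \(I=\{0,2\}\) gives (2), all in the same complex.

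For (2)--(4) you give no construction, and enlarging \(X\) is inconsistent with the statement. All four items must hold in one complex, and enlarging \(X\) destroys the requirement in (1) that the two maps have image \(X\), and possibly their maximality.

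Your tool for (3) also fails. A core map is already an elevation of a downstairs product subcomplex by \Cref{lem:geometriccores}, so \Cref{lem:completion} returns an equivalent map, never a proper enlargement. The larger product with the same projection must come from a different downstairs map. In the paper it is the elevation of \(C\times\Theta\to X\), which has the same image as \(C\times\Theta_{\{2,3\}}\) because the diagonal translates of \(\Theta_{\{2,3\}}\) cover \(\Theta\). This is the phenomenon of \Cref{ex:nonmaxelevation}. For (4) you must also rule out an inversion of the edge; the paper does this by tracking the petal-index shift under deck transformations.

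Finally, for \(Y_0\), a factor-swapping element is incompatible with specialness. It sends a vertical hyperplane of \(\Gamma\times\Gamma\) to a horizontal one, which crosses it, so the quotient has a self-intersecting hyperplane. With a free diagonal action instead, your abelianization idea does work. Transfer gives \(b_1(Y_0)=2b_1(\Gamma/G)\), which equals \(4\) in the paper. Together with \(\chi(Y_0)=4\), this is incompatible with \(b_1=m+n\) and \(b_2=mn\) for \(\FF_m\times\FF_n\).
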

\begin{proof}
This follows from \Cref{prop:bases,prop:vertexfailure,prop:corefailure,prop:loop,prop:stabilizer}.
\end{proof}

\subsection{The finite construction}

Let \(C=C_{12}\) be a cycle graph with vertices \(c_0,\ldots,c_{11}\) in this order, and attach a triangle \(\Delta_i\) at \(c_{3i}\), for \(i\in\ZZ/4\ZZ\). Denote the resulting graph by \(\Theta\). Rotation through three core edges, together with the corresponding permutation of the triangles, gives a free action of \(G=\langle r\rangle\cong\ZZ/4\ZZ\) on \(\Theta\). Set \(\Lambda=\Theta/G\), which is a wedge of two triangles, and consider
\[\frq:\Theta\times \Theta\xlongrightarrow{\text{covering}} Y_0=(\Theta\times \Theta)/\diag(G),\qquad (x,y)\sim (g\cdot x,g\cdot y).\]
The covering \(Y_0\to \Lambda\times \Lambda\) has degree four. Since \(\Lambda\times\Lambda\) is compact special and specialness is preserved by covers \cite[Corollary~3.8]{HW08}, \(Y_0\) is compact special. The base is also two-sided weakly special and has no loop edges, so \(Y_0\) is two-sided weakly special by \Cref{lem:weaklyspecialstability}.
For \(I\subset\ZZ/4\ZZ\), put
\[\Theta_I:=C\cup\bigcup_{i\in I}\Delta_i \subseteq\Theta.\]

\begin{figure}[htbp]
\centering
\begin{subfigure}[c]{.49\textwidth}
\centering
\begin{tikzpicture}[scale=.83,line cap=round,line join=round]
\foreach \j in {0,...,11}{\coordinate (c\j) at ({90+30*\j}:1.55);}
\draw[thick,coregray] (c0) \foreach \j in {1,...,11}{--(c\j)} --cycle;
\foreach \j in {0,...,11}{\fill[coregray] (c\j) circle(1.6pt);}
\foreach \idx/\ang/\col in {0/90/petalblue,1/180/petalred,2/270/petalorange,3/360/petalpurple}{
  \begin{scope}[rotate=\ang]
    \draw[thick,\col] (1.55,0)--(2.25,.42)--(2.25,-.42)--cycle;
    \fill[\col] (2.25,.42) circle(1.6pt);
    \fill[\col] (2.25,-.42) circle(1.6pt);
    \node[\col] at (2.67,0) {$\Delta_{\idx}$};
  \end{scope}
}
\draw[-{Stealth},thick] (60:.85) arc[start angle=60,end angle=140,radius=.85];
\node at (0,.15) {$r$};
\node at (0,-.45) {$C_{12}$};
\end{tikzpicture}
\caption{The graph \(\Theta\); \(r(\Delta_i)=\Delta_{i+1}\).}
\end{subfigure}\hfill
\begin{subfigure}[c]{.48\textwidth}
\centering
\begin{tikzcd}[row sep=large,column sep=large]
\Theta\times \Theta\arrow[r,"\frq", "4:1"']\arrow[dr,"16:1"']&Y_0\arrow[d,"4:1"]\\
&\Lambda\times \Lambda
\end{tikzcd}
\medskip

\begin{tikzpicture}[scale=.68,line cap=round]
\draw[thick,coregray] (0,0)--(-1.1,.7)--(-1.1,-.7)--cycle;
\draw[thick,petalblue] (0,0)--(1.1,.7)--(1.1,-.7)--cycle;
\foreach \x/\y in {0/0,-1.1/.7,-1.1/-.7,1.1/.7,1.1/-.7}
\fill (\x,\y) circle(1.6pt);
\node at (0,-1.15) {$\Lambda=\Theta/G$};
\end{tikzpicture}
\caption{The finite coverings and the quotient graph.}
\end{subfigure}
\caption{A common special ambient complex for all the examples.}
\label{fig:construction}
\end{figure}

\begin{lemma}\label{lem:difference}
The restriction \(\frq:\Theta_I\times \Theta_J\to Y_0\) is a standard product subcomplex. If \(I,J\ne\varnothing\), then its image is determined by the difference set \(J-I:=\{j-i\mid j\in J,\,i\in I\}\subset\ZZ/4\ZZ\), and
\[\frq^{-1}\bigl(\frq(\Theta_I\times \Theta_J)\bigr)=(C\times \Theta)\cup(\Theta\times C)\cup\!\bigcup_{b-a\in J-I}(\Delta_a\times \Delta_b).\]
\end{lemma}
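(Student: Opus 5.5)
The plan is to prove the two assertions separately: first that the restriction is a standard product subcomplex, then the preimage formula; the image statement follows from the formula.

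For the first assertion, \(\frq\) is a covering map, hence a local isometry. The subgraph \(\Theta_I\times\Theta_J\subset\Theta\times\Theta\) is an induced product subcomplex: its vertex links are complete bipartite, and their images are full in \(Y_0\) by \Cref{lem:productrestriction}. So the restriction is a local isometry. The coordinate fibers embed because a nontrivial element \((g,g)\) of \(\diag(G)\) moves both coordinates freely. Two points of \(\Theta_I\times\{y\}\) with the same image would therefore differ by some \((g,g)\) fixing \(y\), forcing \(g=1\); the other fibers are handled the same way. Each \(\Theta_I\) is leafless: the cycle \(C\) is leafless, and each attached triangle is leafless. Hence the product subcomplex is standard, with connected nontrivial factors.

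For the preimage formula, note that
\[\frq^{-1}\bigl(\frq(\Theta_I\times\Theta_J)\bigr)=\bigcup_{g\in G}(g,g)\cdot(\Theta_I\times\Theta_J).\]
Decompose \(\Theta=C\cup\bigcup_a\Delta_a\), so that
\[\Theta_I\times\Theta_J=(C\times C)\cup\bigcup_{j\in J}(C\times\Delta_j)\cup\bigcup_{i\in I}(\Delta_i\times C)\cup\bigcup_{i\in I,\,j\in J}(\Delta_i\times\Delta_j).\]
Translating by \((r^k,r^k)\) sends \(C\) to \(C\) and \(\Delta_a\) to \(\Delta_{a+k}\). Since \(J\neq\varnothing\), the pieces \(C\times\Delta_{j+k}\) cover every \(C\times\Delta_b\) as \(k\) varies, so the orbit contains \(C\times\Theta\). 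Symmetrically, since \(I\neq\varnothing\), it contains \(\Theta\times C\). The square pieces \(\Delta_{i+k}\times\Delta_{j+k}\) give exactly those \(\Delta_a\times\Delta_b\) with \(b-a=j-i\), so they give precisely the pairs with \(b-a\in J-I\). Conversely, every square of \(\Theta\times\Theta\) lies in exactly one of the pieces \(C\times C\), \(C\times\Delta_b\), \(\Delta_a\times C\) or \(\Delta_a\times\Delta_b\). Therefore the orbit union consists of no further \(\Delta_a\times\Delta_b\) beyond those listed, and edges and vertices follow as faces. This gives the displayed formula.

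The image equals \(\frq\) of this preimage, and the preimage depends only on \(J-I\), so the image is determined by \(J-I\). The only point needing care is the bookkeeping in the decomposition: \(\Delta_a\) meets \(C\) in the vertex \(c_{3a}\), so the pieces overlap only in lower-dimensional cells. This guarantees that no square \(\Delta_a\times\Delta_b\) is accidentally included. I expect no serious obstacle.
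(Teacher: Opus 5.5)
Your proposal is correct and follows essentially the same route as the paper. You get the local isometry from the covering and \Cref{lem:productrestriction}, embedded coordinate fibers from freeness of the \(G\)-action, and the preimage formula from the union of diagonal translates with the bookkeeping \(a=i+k\), \(b=j+k\). The only difference is cosmetic: you split \(\Theta_I\times\Theta_J\) into core and petal pieces before translating, whereas the paper works directly with \(\Theta_{I+k}\times\Theta_{J+k}\).
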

\begin{proof}
The inclusion \(\Theta_I\times\Theta_J\hookrightarrow\Theta\times\Theta\) is a local isometry, as is the covering \(\frq\). If \(\frq(x,y)=\frq(x',y)\), then \((x',y)=(g\cdot x,g\cdot y)\) for some \(g\in G\). Since \(G\) acts freely on \(\Theta\), we have \(g=1\) and hence \(x=x'\). The same argument applies with the coordinates interchanged. Thus the restriction represents a product subcomplex. Both factors are leafless, so it is standard.

Now suppose \(I,J\ne\varnothing\). Since \(r^k(\Theta_I)=\Theta_{I+k}\), taking diagonal translates gives
\[\frq^{-1}\bigl(\frq(\Theta_I\times\Theta_J)\bigr)=\bigcup_{k\in\ZZ/4\ZZ}\Theta_{I+k}\times\Theta_{J+k}.\]
Every \(\Theta_{I+k}\) and \(\Theta_{J+k}\) contains \(C\), and each family covers \(\Theta\). Thus this union contains \((C\times\Theta)\cup(\Theta\times C)\).
A petal product \(\Delta_a\times\Delta_b\) occurs precisely when \(a=i+k\) and \(b=j+k\) for some \(i\in I\), \(j\in J\), and \(k\in\ZZ/4\ZZ\). These equalities imply \(b-a=j-i\). Conversely, if \(b-a=j-i\), take \(k=a-i\). Hence the petal products that occur are exactly those with \(b-a\in J-I\). Together with \((C\times\Theta)\cup(\Theta\times C)\), they exhaust the union, proving the formula.
\end{proof}

Let \(T_{\Theta}\to\Theta\) be the universal cover, identify \(\widetilde Y_0\) with \(T_{\Theta}\times T_{\Theta}\), and let \(T_I\) be the component over \(\Theta_I\) containing a fixed lift \(\tilde c_0\) of \(c_0\). All these trees contain the same core line \(T_{\varnothing}\).

\begin{example}[The image of an elevation need not be image-maximal]\label{ex:nonmaxelevation}
Take \(I=\{0,1\}\) and \(J=\{0,2\}\). Then \(J-I=\ZZ/4\ZZ\), so the map is onto \(Y_0\), but
\[T_I\times T_J\subsetneq T_{\Theta}\times T_{\Theta},\qquad\univ_{Y_0}(T_I\times T_J)=\univ_{Y_0}(T_{\Theta}\times T_{\Theta})=Y_0.\]
The inclusion maps of both displayed products into \(\widetilde Y_0\) are standard: they are elevations of product subcomplexes with leafless factors. The smaller product image is unchanged by the completion in \Cref{lem:completion}, but it is not image-maximal among products with the same projection to \(Y_0\).
Thus completion does not imply the old \(p\)-lift condition, contrary to the final inference in the proof of \cite[Lemma~2.9]{Oh22}; its existence assertion remains valid.
\end{example}

Now let \(X\subset Y_0\) be the common image
\begin{equation}\label{eq:X}
X=\frq(\Theta_{\{0\}}\times \Theta_{\{1,2,3\}})=\frq(\Theta_{\{0,1\}}\times \Theta_{\{2,3\}}).
\end{equation}
Both difference sets are \(\{1,2,3\}\), so
\begin{equation*}
\frq^{-1}(X)=(C\times \Theta)\cup(\Theta\times C)\cup\bigcup_{a\ne b}(\Delta_a\times \Delta_b).
\end{equation*}
For each nonempty proper subset \(I\subset\ZZ/4\ZZ\), the restriction of \(\frq\) to \(\Theta_I\times\Theta_{I^c}\) has image in \(X\), where \(I^c=(\ZZ/4\ZZ)\setminus I\). Denote this map by \(\cubmap{m}_I:\Theta_I\times\Theta_{I^c}\to X\).

We now work in \(X\) with its induced square structure. Same-index petal squares are forbidden.

\begin{figure}[htbp]
\centering
\begin{tikzpicture}[x=.49cm,y=.49cm,font=\small]
\foreach \panel/\dx/\lab in {1/0/{\{0\}\times\{1,2,3\}},2/7.4/{\{0,1\}\times\{2,3\}},3/14.8/{\{0,2\}\times\{1,3\}}}{
\begin{scope}[xshift=\dx*.49cm]
\foreach \a in {0,...,3}{\foreach \b in {0,...,3}{
\pgfmathtruncatemacro{\ok}{ifthenelse(\panel<3,\a!=\b,mod(\b-\a+4,2)==1)}
\ifnum\ok=1 \fill[maskblue] (\b,-\a) rectangle ++(1,-1);\fi
\draw[gray] (\b,-\a) rectangle ++(1,-1);}}
\foreach \i in {0,...,3}{\node at (\i+.5,.45){\(\i\)};\node at (-.45,-\i-.5){\(\i\)};}
\node at (2,1.1) {$I\times J$};
\node at (2,-4.65) {$\lab$};
\ifnum\panel=1
\foreach \b in {1,2,3}{\fill[petalblue] (\b+.5,-.5) circle(2.5pt);}
\node at (2,-5.45) {$J-I=\{1,2,3\}$};
\fi
\ifnum\panel=2
\foreach \a in {0,1}{\foreach \b in {2,3}{\fill[petalblue] (\b+.5,-\a-.5) circle(2.5pt);}}
\node at (2,-5.45) {$J-I=\{1,2,3\}$};
\fi
\ifnum\panel=3
\foreach \a in {0,2}{\foreach \b in {1,3}{\fill[petalblue] (\b+.5,-\a-.5) circle(2.5pt);}}
\node at (2,-5.45) {$J-I=\{1,3\}$};
\fi
\end{scope}}
\end{tikzpicture}
\caption{Petal-pair bookkeeping. Rows index the first factor, columns the second. A dot marks a pair in the chosen rectangle \(I\times J\); blue shading marks its diagonal-translation orbit. Each cell represents a whole petal product \(\Delta_a\times \Delta_b\), not a single square. The core--petal parts are present in all three cases. The first two images equal \(X\), while the third is a proper subcomplex of \(X\).}
\label{fig:masks}
\end{figure}

Choose a vertex \(x_0\in\widetilde X\) over \(\frq(c_0,c_0)\), and let \(\cubmap{F}:\widetilde X\to T_{\Theta}\times T_{\Theta}\) be the lift of the composite \(\widetilde X\xrightarrow{\univ_X}X\hookrightarrow Y_0\) through \(\univ_{Y_0}\) satisfying \(\cubmap{F}(x_0)=(\tilde c_0,\tilde c_0)\).

\begin{lemma}\label{lem:maximal}
The complex \(X\) is compact two-sided weakly special. For every nonempty proper \(I\subset\ZZ/4\ZZ\), let \(\cubmapu{m}_I\) be an elevation of \(\cubmap{m}_I\). The map \(\cubmapu{m}_I\) admits no proper product factorization. Thus both \(\cubmap{m}_I\) and \(\cubmapu{m}_I\) are maximal product subcomplexes, and \(\im{\cubmapu{m}_I}\) is also an image-maximal \(p\)-lift.
\end{lemma}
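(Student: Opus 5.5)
The first assertion follows from \Cref{lem:weaklyspecialstability}: \(X\) is a connected cubical subcomplex of the compact two-sided weakly special complex \(Y_0\). For maximality I will not try to make \(\widetilde X\) a convex subcomplex of \(T_\Theta\times T_\Theta\). It is not one: at a vertex over \((c_{3a},c_{3a})\), the two \(\Delta_a\)-edge germs lie in \(X\) through \(\Theta\times C\) and \(C\times\Theta\), but the square \(\Delta_a\times\Delta_a\) spanned by them is missing. Instead I will use the lift \(\cubmap{F}:\widetilde X\to T_\Theta\times T_\Theta\) only as a cubical immersion, and argue combinatorially with the forbidden same-index petal squares.

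\textbf{Step 1.} Fix the elevation \(\cubmapu{m}_I\) through \(x_0\). The maps \(\cubmap{F}\circ\cubmapu{m}_I\) and the inclusion of \(T_I\times T_{I^c}\) both lift the same map into \(Y_0\) and agree at a basepoint. Hence \(\cubmap{F}\circ\cubmapu{m}_I\) is the inclusion \(T_I\times T_{I^c}\hookrightarrow T_\Theta\times T_\Theta\). Changing the elevation only changes this by a deck transformation.

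\textbf{Step 2.} Suppose \(\cubmapu{m}_I=\cubmap{u}\circ\cubmap{h}\), where \(\cubmap{u}:S_1\times S_2\to\widetilde X\) is a product subcomplex and \(\cubmap{h}\) is a product embedding. Since \(\widetilde X\) is \(\CAT\), the \(S_j\) are trees and \(\cubmap{u}\) is an isometric embedding. The composite \(\cubmap{F}\circ\cubmap{u}\) is a cubical immersion of a product of trees into a product of trees. By \Cref{lem:productrestriction} it splits factorwise, possibly after interchanging factors. Each factor map is an immersion of trees, hence an embedding. So \(\cubmap{F}\circ\cubmap{u}\) has image \(A\times B\) with subtrees \(A,B\subseteq T_\Theta\). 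By Step 1, and since \(\cubmap{h}\) respects the alignment of factor directions, we get \(T_I\subseteq A\) and \(T_{I^c}\subseteq B\).

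\textbf{Step 3 (main step).} Every square of \(A\times B\) is the image of a square of \(\widetilde X\). So its projection to \(Y_0\) lies in \(X\), and in particular it does not lie over a same-index petal product \(\Delta_a\times\Delta_a\). Suppose \(A\supsetneq T_I\). Then some edge \(e\in A\) is adjacent to \(T_I\) but not in it. Since \(T_I\) is the full component over \(\Theta_I\), the edge \(e\) lies over a petal \(\Delta_a\) with \(a\in I^c\). The tree \(B\supseteq T_{I^c}\) contains a lifted edge \(e'\) of \(\Delta_a\). The square \(e\times e'\) lies in \(A\times B\) but projects into \(\Delta_a\times\Delta_a\), a contradiction. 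Symmetrically \(B=T_{I^c}\). Thus \(\cubmap{F}\circ\cubmap{u}\circ\cubmap{h}\) is surjective onto the image of the embedding \(\cubmap{F}\circ\cubmap{u}\), so \(\cubmap{h}\) is surjective, hence a product isomorphism. This is the claim that \(\cubmapu{m}_I\) admits no proper product factorization. By \Cref{prop:maporbits}, \(\cubmap{m}_I\) is then maximal in \(X\).

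The point I expect to need the most care is in Step 3: the squares of \(A\times B\) really come from squares of \(\widetilde X\), and the factor interchange allowed in Step 2 is compatible with the fixed alignment from Step 1. Both should follow from injectivity of \(\cubmap{F}\) on \(\im{\cubmap{u}}\).

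\textbf{Step 4.} The image-maximal \(p\)-lift statement uses the same argument. Any embedded product of trees in \(\widetilde X\) containing \(\im{\cubmapu{m}_I}\) is of the form \(\cubmap{u}\) in Step 2, regardless of its projection. Step 3 then forces it to equal \(\im{\cubmapu{m}_I}\), so \(\im{\cubmapu{m}_I}\) is inclusion-maximal among such products.
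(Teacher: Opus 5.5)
Your proposal is correct and follows essentially the same route as the paper: push any containing product forward by \(\cubmap{F}\), which is injective on products (the paper gets this from a local isometry with simply connected domain, you from the factorwise splitting into tree immersions), and rule out proper extensions of \(T_I\times T_{I^c}\) using the forbidden same-index petal squares. The two points you flag in Step~3 need no extra care: \(A\times B\) is by definition the image of \(\cubmap{F}\circ\cubmap{u}\), and \(T_I\times T_{I^c}\subseteq A\times B\) is a plain set inclusion that already gives \(T_I\subseteq A\) and \(T_{I^c}\subseteq B\), whatever factor interchange occurs.
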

\begin{proof}
The complex \(X\) is finite and connected. As a cubical subcomplex of \(Y_0\), it is two-sided weakly special by \Cref{lem:weaklyspecialstability}.

First, work in \(\widetilde Y_0=T_{\Theta}\times T_{\Theta}\). A first edge leaving \(T_I\) lies over a petal \(\Delta_j\) for \(j\in I^c\). The second factor \(T_{I^c}\) contains an edge over \(\Delta_j\), so an extension in the first factor would contain a forbidden same-index square. The other factor is treated symmetrically. By \Cref{lem:productrestriction}, any containing product respects the two ambient factor directions. Thus \(T_I\times T_{I^c}\) has no proper product extension which projects into \(X\).

We do not assume that \(\cubmap{F}\) is globally injective. For any product subcomplex \(\cubmap{u}:T_1\times T_2\to\widetilde X\), the composite \(\cubmap{F}\circ \cubmap{u}\) is a cubical immersion, hence a local isometry by \Cref{lem:productrestriction}. Since its domain is simply connected, this composite is an embedding. A proper product extension of \(\im{\cubmapu{m}_I}\) would consequently give the excluded extension in \(T_{\Theta}\times T_{\Theta}\).
\end{proof}

\subsection{The failures of image data}

\begin{proposition}\label{prop:bases}
The image-defined reduced complex of \cite[Definition~3.8]{Oh22} for \(X\) has one vertex, whereas \(\cI(X)\) has at least two vertices, corresponding to distinct deck orbits upstairs. The fundamental group of the domain of a maximal product subcomplex is not determined by its image.
\end{proposition}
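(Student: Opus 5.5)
We work with the two maps from \eqref{eq:X}, namely \(\cubmap{m}_{\{0\}}:\Theta_{\{0\}}\times\Theta_{\{1,2,3\}}\to X\) and \(\cubmap{m}_{\{0,1\}}:\Theta_{\{0,1\}}\times\Theta_{\{2,3\}}\to X\). Both are maximal product subcomplexes of \(X\) by \Cref{lem:maximal}, and both have image \(X\). The plan has three steps: show the image-defined complex collapses to one vertex, show these two maps give different vertices of \(\cI(X)\), and compute their product-base groups.

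\textbf{Step 1: the image-defined complex has one vertex.} In \cite[Definition~3.8]{Oh22}, vertices are inclusion-maximal standard product images in \(X\). The whole complex \(X\) is such an image, so it is the unique inclusion-maximal one. Hence \(\cRI_{\mathrm{img}}(X)\) has exactly one vertex.

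\textbf{Step 2: the two maps give distinct vertices of \(\cI(X)\).} By \Cref{prop:maporbits}, it suffices to show that elevations of \(\cubmap{m}_{\{0\}}\) and \(\cubmap{m}_{\{0,1\}}\) lie in different deck orbits upstairs. The most robust route is through fundamental groups. If the two maps were equivalent, there would be a product isomorphism between their domains, and then the domain fundamental groups would be isomorphic. So it is enough to show the groups differ, which is done in Step 3.

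As a geometric cross-check, use the map \(\cubmap{F}\) and \Cref{lem:maximal}: elevation images map into \(T_\Theta\times T_\Theta\) as products \(T_I\times T_{I^c}\), up to diagonal deck translation. The two factor trees for \(\Theta_{\{0\}}\) and \(\Theta_{\{0,1\}}\) are distinguished by the number of petal types their lifts see modulo \(G\). Step 3 makes this precise.

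\textbf{Step 3: compute the product-base groups.} The graph \(\Theta_I\) is the cycle \(C\) with \(|I|\) triangles attached, so
\[\operatorname{rank}\pi_1(\Theta_I)=1+|I|.\]
Applying this to each factor:
\begin{itemize}
\item for \(\{0\}\times\{1,2,3\}\), the ranks are \(2\) and \(4\), giving \(\FF_2\times\FF_4\);
\item for \(\{0,1\}\times\{2,3\}\), the ranks are \(3\) and \(3\), giving \(\FF_3\times\FF_3\).
\end{itemize}
A direct product of two nontrivial free groups determines the unordered pair of ranks. One way to see this is through centralizer and product-decomposition rigidity. Alternatively, abelianize to get the total rank \(m+n\), and use the Euler characteristic \((1-m)(1-n)\), which is a well-defined invariant of these groups. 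For the two cases, \(\chi\) equals \(-3\) and \(4\) respectively, so the groups are not isomorphic.

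This shows the domain groups are non-isomorphic, so the maps are inequivalent and \(\cI(X)\) has at least two vertices. Since both maps have the same image \(X\), it also shows that the product-base group is not determined by the image.

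\textbf{Main obstacle.} The delicate point is justifying that inequivalence downstairs corresponds to distinct deck orbits, and that these maps are vertices at all. Both facts are supplied by \Cref{lem:maximal} and \Cref{prop:maporbits}, so the remaining work is the rank and Euler characteristic bookkeeping.
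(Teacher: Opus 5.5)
Your proposal is correct and follows essentially the same route as the paper. In both, \(X\) is itself the unique maximal standard product image; \Cref{lem:maximal} makes the two rectangle maps maximal; the ranks \(1+|I|\) give the non-isomorphic groups \(\FF_2\times\FF_4\) and \(\FF_3\times\FF_3\); and \Cref{prop:maporbits} turns inequivalence into distinct deck orbits.

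One small slip: \(\chi(\FF_2\times\FF_4)=(1-2)(1-4)=3\), not \(-3\). The groups are still distinguished, since \(3\ne 4\); note that the abelianizations alone do not separate them, as both are \(\ZZ^6\).
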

\begin{proof}
The whole complex \(X\) is a standard product image in the sense of \cite[Definition~2.10]{Oh22}, so it is the unique maximal such image in \(X\).
The first two rectangles in \Cref{fig:masks} give maximal product subcomplexes by \Cref{lem:maximal}. Since \(\pi_1(\Theta_I)\cong\FF_{1+|I|}\), the fundamental groups of their domains are isomorphic to \(\FF_2\times\FF_4\) and \(\FF_3\times\FF_3\), respectively. These groups are not isomorphic, so the product subcomplexes are distinct. Their elevations therefore lie in distinct deck orbits by \Cref{prop:maporbits}.
\end{proof}

\begin{proposition}\label{prop:vertexfailure}
A maximal product subcomplex can have an image that is not maximal among standard product images in the sense of \cite[Definition~2.10]{Oh22}.
\end{proposition}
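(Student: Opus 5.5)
The plan is to use the third rectangle in \Cref{fig:masks}. Take \(I=\{0,2\}\), so that \(I^c=\{1,3\}\), and consider \(\cubmap{m}_I:\Theta_{\{0,2\}}\times\Theta_{\{1,3\}}\to X\).

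First, \Cref{lem:maximal} already says that \(\cubmap{m}_I\) is a maximal product subcomplex of \(X\). By \Cref{lem:difference}, it is standard with image determined by the difference set \(J-I=\{1,3\}\). Explicitly,
\[\frq^{-1}\bigl(\im{\cubmap{m}_I}\bigr)=(C\times\Theta)\cup(\Theta\times C)\cup\bigcup_{b-a\in\{1,3\}}(\Delta_a\times\Delta_b).\]

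Second, compare this with \(\frq^{-1}(X)\), which by the displayed formula after \eqref{eq:X} contains every petal product \(\Delta_a\times\Delta_b\) with \(a\ne b\). In particular it contains \(\Delta_0\times\Delta_2\), since \(2-0=2\notin\{1,3\}\). No square of \(\Delta_0\times\Delta_2\) lies in \(\frq^{-1}(\im{\cubmap{m}_I})\). The core--petal parts cannot contain it, because \(\Delta_0\) and \(\Delta_2\) meet \(C\) only at vertices and the triangle edges are not edges of \(C\). Hence \(\frq(\Delta_0\times\Delta_2)\not\subseteq\im{\cubmap{m}_I}\), and the image is a proper subcomplex of \(X\).

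Third, recall that \(X\) itself is a standard product image in the sense of \cite[Definition~2.10]{Oh22}: it is the image of the standard product subcomplex \(\cubmap{m}_{\{0\}}\) (or \(\cubmap{m}_{\{0,1\}}\)), as already used in \Cref{prop:bases}. So \(\im{\cubmap{m}_I}\subsetneq X\) is properly contained in another standard product image and therefore is not maximal among standard product images. The statement itself only needs these facts.

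The only point needing care is the properness claim in the second step. One must check that squares of \(\Delta_0\times\Delta_2\) are genuinely distinct from those in the core--petal union and the allowed petal products, and then pass from the preimage statement to the image statement. Since \(\frq\) is a covering and the preimage formulas describe full \(\frq\)-saturated subcomplexes, a square of \(X\) lies in \(\im{\cubmap{m}_I}\) exactly when its lifts lie in the displayed preimage, so this reduces to the bookkeeping in \Cref{lem:difference}.
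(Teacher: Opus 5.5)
Your proposal is correct and follows the paper's proof. It uses the same choice \(I=\{0,2\}\), maximality from \Cref{lem:maximal}, properness of the image inside \(X\) via the difference set \(\{1,3\}\) versus \(\{1,2,3\}\), and the fact that \(X\) is itself a standard product image; your explicit check with the missing petal product \(\Delta_0\times\Delta_2\) spells out what the paper reads off from the last panel of \Cref{fig:masks}.
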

\begin{proof}
Use \(I=\{0,2\}\) and \(I^c=\{1,3\}\). The downstairs map and its elevation are maximal by \Cref{lem:maximal}, but the downstairs image has only odd petal differences and is strictly contained in \(X\); see the last panel of \Cref{fig:masks}. The containing image \(X\) is itself a standard product image.
\end{proof}

Thus projection need not send the image of a maximal product subcomplex upstairs to an inclusion-maximal standard product image downstairs, so it does not define a map on the old vertex sets.

\begin{proposition}\label{prop:corefailure}
There are maximal product subcomplexes \(\cubmapu{m}_0,\cubmapu{m}_1\) of \(\widetilde X\) whose core map \(\cubmapu{w}\) has image
\[\im{\cubmapu{w}}=\im{\cubmapu{m}_0}\cap \im{\cubmapu{m}_1}\cong T_{\varnothing}\times T_{\{2,3\}}.\]
The map \(\cubmapu{w}\) is standard, but \(\im{\cubmapu{w}}\) is not an image-maximal \(p\)-lift. No image-maximal \(p\)-lift contained in \(\im{\cubmapu{w}}\) is at finite Hausdorff distance from \(\im{\cubmapu{w}}\).
\end{proposition}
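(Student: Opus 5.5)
We take the two maximal product subcomplexes given by the one-element index sets \(I=\{0\}\) and \(I=\{1\}\). Let \(\cubmapu{m}_0\) and \(\cubmapu{m}_1\) be the elevations of \(\cubmap{m}_{\{0\}}:\Theta_{\{0\}}\times\Theta_{\{1,2,3\}}\to X\) and \(\cubmap{m}_{\{1\}}:\Theta_{\{1\}}\times\Theta_{\{0,2,3\}}\to X\) that pass through the base vertex \(x_0\). Both are maximal by \Cref{lem:maximal}. Under \(\cubmap{F}\) their images go to \(T_{\{0\}}\times T_{\{1,2,3\}}\) and \(T_{\{1\}}\times T_{\{0,2,3\}}\), respectively.

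In \(T_\Theta\), the component \(T_{\{0\}}\cap T_{\{1\}}\) through \(\tilde c_0\) is the core line \(T_\varnothing\), because the petals \(\Delta_0\) and \(\Delta_1\) are attached at different vertices of \(C\). Also \(T_{\{1,2,3\}}\cap T_{\{0,2,3\}}=T_{\{2,3\}}\). So the expected intersection is \(T_\varnothing\times T_{\{2,3\}}\). Both factors are leafless, so this set is a union of flats and equals its own core. The core map \(\cubmapu{w}\) is therefore standard by \Cref{lem:geometriccores}.

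\textbf{Main obstacle: computing the intersection inside \(\widetilde X\).} We cannot compute it inside \(\widetilde Y_0\) directly, since \(\cubmap{F}\) is not known to be injective; the inclusion \(X\subset Y_0\) need not be a local isometry, because same-index petal squares are missing. We argue in two steps.

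For ``\(\supseteq\)'', note that \(C\times\Theta_{\{2,3\}}\) is contained in both \(\Theta_{\{0\}}\times\Theta_{\{1,2,3\}}\) and \(\Theta_{\{1\}}\times\Theta_{\{0,2,3\}}\). Hence the restrictions of \(\cubmap{m}_{\{0\}}\) and \(\cubmap{m}_{\{1\}}\) to this common subproduct agree as maps to \(X\). Their lifts through \(x_0\) therefore agree by uniqueness of lifts. This gives a common subproduct of \(\widetilde X\) mapped by \(\cubmap{F}\) onto \(T_\varnothing\times T_{\{2,3\}}\).

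For ``\(\subseteq\)'', \(\cubmap{F}\) is injective on \(\im{\cubmapu{m}_0}\), as in the proof of \Cref{lem:maximal}. It sends the intersection into \(T_\varnothing\times T_{\{2,3\}}\), and that set is already covered by the common subproduct, so the intersection is no larger.

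To see that \(\im{\cubmapu{w}}\) is not an image-maximal \(p\)-lift, we compute its projection. This is \(\frq(C\times\Theta_{\{2,3\}})\), and its preimage in \(\Theta\times\Theta\) is \(\bigcup_k C\times\Theta_{\{2,3\}+k}=C\times\Theta\). This preimage lies in \(\frq^{-1}(X)\) by the displayed formula for \(\frq^{-1}(X)\). Hence \(\frq(C\times\Theta)\) is a product subcomplex of \(X\) with the same image as \(\frq(C\times\Theta_{\{2,3\}})\). Lifting \(\frq\) restricted to \(C\times\Theta\) through \(x_0\) gives a product of \(\widetilde X\) that \(\cubmap{F}\) maps onto \(T_\varnothing\times T_\Theta\). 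This product properly contains \(\im{\cubmapu{w}}\) and has the same projection.

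For the last assertion, let \(P\subseteq\im{\cubmapu{w}}\) be an embedded product whose projection is this fixed downstairs image. The same extension argument applies to \(P\): its first factor lies in the line, so \(\cubmap{F}(P)\subseteq T_\varnothing\times T_{\{2,3\}}\subsetneq T_\varnothing\times T_\Theta\). Thus \(P\) lies properly inside a product with the same projection, namely the lift of \(C\times\Theta\). Hence no image-maximal \(p\)-lift is contained in \(\im{\cubmapu{w}}\) at all, and in particular none is at finite Hausdorff distance from it. If the definition of \(p\)-lift allows a different projected image, we instead use \Cref{lem:coarse}: finite Hausdorff distance forces equality of images, which reduces to the case just treated.
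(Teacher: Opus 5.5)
Your proposal is correct and follows the paper's proof. You use the same pair \(I=\{0\},\{1\}\), the compatible lift of \(C\times\Theta_{\{2,3\}}\) through both elevations for one inclusion, injectivity of \(\cubmap{F}\) on \(\im{\cubmapu{m}_0}\) for the other, and the lift of \(C\times\Theta\) with the same projection.

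One correction in the last step. Since \(p\)-lifts may project onto any standard product image, your claim that no image-maximal \(p\)-lift lies in \(\im{\cubmapu{w}}\) ``at all'' is false: the core plane \(E\), an image-maximal \(p\)-lift of \(\frq(C\times C)\), lies in it. So your \Cref{lem:coarse} fallback is the actual proof of that assertion, not an optional variant; it is exactly the paper's convexity argument forcing \(\overline{K}=\im{\cubmapu{w}}\).
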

\begin{proof}
Choose based elevations \(\cubmapu{m}_0\) and \(\cubmapu{m}_1\) for the pairs \((\{0\},\{1,2,3\})\) and \((\{1\},\{0,2,3\})\) whose images contain the same core plane \(E\).
The compatible elevation \(\cubmapu{w}:T_{\varnothing}\times T_{\{2,3\}}\to\widetilde X\) of \(C\times \Theta_{\{2,3\}}\to X\) factors through both. The image of \(\cubmap{F}\circ\cubmapu{w}\) is exactly the intersection of the two coordinate rectangles in \(T_{\Theta}\times T_{\Theta}\).
Injectivity of \(\cubmap{F}|_{\im{\cubmapu{m}_0}}\) proves the asserted equality \(\im{\cubmapu{w}}=\im{\cubmapu{m}_0}\cap\im{\cubmapu{m}_1}\) in \(\widetilde X\).

The diagonal translates of \(\Theta_{\{2,3\}}\) cover \(\Theta\), so
\[\univ_X(\im{\cubmapu{w}})=\frq(C\times \Theta_{\{2,3\}})=\frq(C\times \Theta).\]
The product subcomplex \(C\times \Theta\to X\) has a compatible elevation \(\cubmapu{v}:T_{\varnothing}\times T_{\Theta}\to\widetilde X\). We have \(\cubmapu{w}\preceq \cubmapu{v}\), \(\im{\cubmapu{w}}\subsetneq\im{\cubmapu{v}}\), and \(\univ_X(\im{\cubmapu{w}})=\univ_X(\im{\cubmapu{v}})\).
Both \(\cubmapu{w}\) and \(\cubmapu{v}\) are standard, since their downstairs maps have leafless factors. However, \(\im{\cubmapu{w}}\) fails the old image-maximality condition. If an image-maximal \(p\)-lift \(\overline{K}\subset \im{\cubmapu{w}}\) were at finite Hausdorff distance from \(\im{\cubmapu{w}}\), then distance to the convex set \(\overline{K}\) would be bounded and convex on the geodesically complete space \(\im{\cubmapu{w}}\), hence constant. It vanishes on \(\overline{K}\), so \(\overline{K}=\im{\cubmapu{w}}\), a contradiction.
\end{proof}

This failure occurs at an actual simplex of the original intersection complex. The core torus \(K_0=\frq(C\times C)\) is locally convex in \(X\), and the component \(E\) of \(\univ_X^{-1}(K_0)\) is an image-maximal \(p\)-lift. Thus the old simplex criterion also holds for \(\im{\cubmapu{m}_0},\im{\cubmapu{m}_1}\). Their intersection nevertheless lacks the core required in \cite[Lemma~2.14 and Definition~3.5(3)]{Oh22}. In the new intersection complex, the simplex \(\{[\cubmapu{m}_0],[\cubmapu{m}_1]\}\) has core-map label \([\cubmapu{w}]\); its downstairs map label is \([C\times \Theta_{\{2,3\}}\to X]\). The proper factorization \(\cubmapu{w}\preceq \cubmapu{v}\) does not change this label: core maximality is taken within the intersection \(\im{\cubmapu{w}}\).

\begin{figure}[htbp]
\centering
\begin{tikzcd}[row sep=large,column sep=huge]
\im{\cubmapu{w}}\cong T_{\varnothing}\times T_{\{2,3\}}\arrow[r,hookrightarrow,"\text{proper}"]\arrow[d,"\univ_X"'] &\im{\cubmapu{v}}\cong T_{\varnothing}\times T_{\Theta}\arrow[d,"\univ_X"]\\
\frq(C\times \Theta)\arrow[r,equal]&\frq(C\times \Theta)
\end{tikzcd}
\caption{A proper inclusion between standard elevation images can leave their projections unchanged. The smaller image is therefore not an image-maximal \(p\)-lift.}
\label{fig:corefailure}
\end{figure}

\begin{proposition}\label{prop:loop}
An edge of \(\cI(\widetilde X)\) projects to a loop edge in the deck quotient.
\end{proposition}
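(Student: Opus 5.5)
Within the complex \(X\) of \eqref{eq:X}, I will exhibit two distinct maximal product subcomplexes of \(\widetilde X\) in the same deck orbit whose images contain a common flat. By \Cref{def:geometricic} they span an edge of \(\cI(\widetilde X)\). Because the two endpoints lie in one deck orbit, the edge projects under \(\cubmap{q}\) of \Cref{thm:mapquotient} to an edge whose endpoints are the same vertex of \(\cI(X)\), that is, a loop edge.

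\textbf{The candidate pair.} I take \(I=\{0\}\) and \(\cubmap{m}_I:\Theta_{\{0\}}\times\Theta_{\{1,2,3\}}\to X\), which is maximal by \Cref{lem:maximal}. I choose two elevations, \(\cubmapu{m}\) and \(\gamma\circ\cubmapu{m}\). The deck element \(\gamma\) should come from a loop in \(X\) that winds once around the core torus \(K_0=\frq(C\times C)\) along the diagonal class realizing \((r,r)\). Concretely, I pick a path in \(T_\Theta\times T_\Theta\) from \((\tilde c_0,\tilde c_0)\) to a lift of \((c_3,c_3)\) inside the core plane \(E\). This path projects to a loop in \(K_0\subset X\), since \(\frq(c_0,c_0)=\frq(c_3,c_3)\).

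Under \(\cubmap{F}\), the first elevation has image \(T_{\{0\}}\times T_{\{1,2,3\}}\). The translate has the image of the product over \(\Theta_{\{1\}}\times\Theta_{\{2,3,0\}}\) taken at the shifted basepoint, which in \(\widetilde Y_0\) coordinates is a rectangle of type \((\{1\},\{0,2,3\})\). This is precisely the pair already used in \Cref{prop:corefailure}. There we know both images contain the common core plane \(E\), and their intersection is \(\im{\cubmapu{w}}\), which contains a flat.

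\textbf{Steps in order.}
\begin{enumerate}
\item Verify that the diagonal translate of \(\cubmapu{m}_{\{0\}}\) by \(\gamma\) is, up to equivalence, the elevation \(\cubmapu{m}_1\) of \Cref{prop:corefailure}. To do this, I will check that \(\cubmap{F}\) intertwines \(\gamma\) with the deck transformation of \(\widetilde Y_0\) lifting \((r,r)\) and preserving \(E\). I will then use \(r(\Theta_{\{0\}})=\Theta_{\{1\}}\) and \(r(\Theta_{\{1,2,3\}})=\Theta_{\{2,3,0\}}\).
\item Show the two elevations are distinct as equivalence classes. Their images differ in \(\widetilde Y_0\): the first factor of one contains petals over \(\Delta_0\) and the other over \(\Delta_1\). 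Since \(\cubmap{F}\) is injective on each such image, as in the proof of \Cref{prop:corefailure}, the images in \(\widetilde X\) differ. Upstairs, classes are determined by images.
\item Conclude via \Cref{prop:maporbits} and \Cref{lem:commutingdiagrams}. The edge corresponds to a nondegenerate maximal common-factor diagram with two arrows into the same vertex class \([\cubmap{m}_{\{0\}}]\), and its core-map label is \([C\times\Theta_{\{2,3\}}\to X]\).
\end{enumerate}

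\textbf{Main obstacle.} The delicate point is step 1: confirming that \(\gamma\) genuinely lies in \(\pi_1(X)\), and not merely in \(\pi_1(Y_0)\), and that it acts on \(\widetilde X\) compatibly with \(\cubmap{F}\). Two facts should settle this. First, the loop used lies in \(K_0\subset X\). Second, \(\cubmap{F}\) is the lift of the inclusion \(X\hookrightarrow Y_0\), so it is equivariant for \(\pi_1(X)\to\pi_1(Y_0)\). If that works, nondegeneracy of the diagram is exactly the distinctness in step 2, so no degenerate repeated arrow arises.
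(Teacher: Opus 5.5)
Your construction of the edge is the same as the paper's. You use the deck element from the diagonal translation through three core edges in \(K_0\), which lies in \(\pi_1(X)\) and preserves \(E\) exactly as you argue. You take the pair of elevations of petal types \((\{0\},\{1,2,3\})\) and \((\{1\},\{0,2,3\})\), and you prove distinctness via first-factor petal sets under \(\cubmap{F}\).

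The gap is the inference in your first paragraph that endpoints in a single deck orbit already give a loop edge. The quotient \(\cI(\widetilde X)/\pi_1(X)\) of \Cref{thm:mapquotient} retains the vertex permutations induced by simplex stabilizers, and the realization is glued along diagram automorphisms. Suppose some deck transformation exchanged \([\cubmapu{m}]\) and \([\gamma\circ\cubmapu{m}]\). Then the edge would be inverted: the two-arrow diagram downstairs would have an automorphism transposing its indices. The image of the edge in \(|\cI(X)|\) would then be folded onto a half-edge rather than closing up to a loop. Nondegeneracy does not exclude this, because it only forbids a transposition that is the identity on the common domain. So you must show that the stabilizer of the edge does not swap its endpoints. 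The paper carries out this step ("There is no inversion of this edge"), and your proposal omits it.

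The missing step follows from the tools of your Step 1. For any deck transformation \(h\) of \(\widetilde X\), equivariance of \(\cubmap{F}\) gives \(\cubmap{F}\circ h=h'\circ\cubmap{F}\) for a deck transformation \(h'\) of \(T_\Theta\times T_\Theta\) over \(Y_0\). Such an \(h'\) is a product of factor automorphisms, each covering \(r^k\), where \(k\) is the image of \(h\) under \(\pi_1(X)\to\pi_1(Y_0)\to G\cong\ZZ/4\ZZ\). Read the petal type of a maximal product subcomplex of \(\widetilde X\) off its \(\cubmap{F}\)-image. Then \(h\) sends type \((I,J)\) to type \((I+k,J+k)\), never exchanging the two factors. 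Exchanging \((\{0\},\{1,2,3\})\) and \((\{1\},\{0,2,3\})\) would require \(k=1\) and \(k=-1\) at once, which is impossible in \(\ZZ/4\ZZ\). With this added, the edge is not folded and its image is a genuine loop edge. By contrast, the point you flagged as the main obstacle (that \(\gamma\in\pi_1(X)\) and acts compatibly with \(\cubmap{F}\)) is routine.
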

\begin{proof}
The simultaneous translation through three core edges in both coordinates projects to a loop in \(K_0\). Its deck transformation \(g\in\pi_1(X)\) preserves \(E\).
Choose \(\cubmapu{m}_0\) from \Cref{prop:corefailure}, whose image contains \(E\). The images \(\im{\cubmapu{m}_0}\) and \(\im{g\circ\cubmapu{m}_0}\) are distinct: their \(\cubmap{F}\)-images have first-factor petal sets \(\{0\}\) and \(\{1\}\), respectively. Both contain \(E\), so the map classes \([\cubmapu{m}_0]\) and \([g\circ \cubmapu{m}_0]\) span an edge and have the same vertex orbit.

There is no inversion of this edge. Under the composite \(\pi_1(X)\to\pi_1(Y_0)\to G\), where the last map corresponds to the covering \(\Theta\times \Theta\to Y_0\), a deck transformation changes both petal index sets by its image \(k\in\ZZ/4\ZZ\), without exchanging factor directions. Exchanging these two subcomplexes would require both \(k=1\) and \(k=-1\), which is impossible. The edge interior is therefore not folded, and its image in the quotient is a loop edge.
\end{proof}

\begin{figure}[htbp]
\centering
\begin{tikzpicture}[>=Stealth,font=\small]
\node at (0,2.65) {Upstairs incidence};
\coordinate (u) at (-1.1,1.45);
\coordinate (v) at (1.1,1.45);
\draw[thick] (u)--node[above] {$e$}(v);
\fill (u) circle(2.2pt) node[below left] {$[\cubmapu{m}_0]$};
\fill (v) circle(2.2pt) node[below right] {$[g\circ \cubmapu{m}_0]$};
\draw[->] (-.6,1.85) to[bend left=30] node[above] {$g$}(.6,1.85);
\draw[->,thick] (0,1.05)--node[right] {deck quotient}(0,.4);
\coordinate (o) at (0,-.5);
\draw[thick] (0,-.15) circle[radius=.35];
\fill (o) circle(2.2pt) node[below] {$\pi_1(X)\cdot[\cubmapu{m}_0]$};
\node at (0,-1.18) {A loop edge in the quotient};
\node at (5.0,1.65) {Image-defined reduced complex};
\fill (5, .7) circle(2.2pt) node[below] {$X$};
\node[align=center,text width=4.8cm] at (5,-.48)
{One vertex; it also receives\\ nonisomorphic product bases.};
\node at (2.45,.35) {$\not\cong$};
\end{tikzpicture}
\caption{The drawing on the left shows one edge and its orbit; it is not the whole upstairs complex or the whole orbit quotient. The full quotient \(\cI(X)\) has further vertices by \Cref{prop:bases}. The old image-defined reduced complex on the right is a single vertex.}
\label{fig:loop}
\end{figure}

\begin{proposition}\label{prop:stabilizer}
The full stabilizer of the unique maximal product subcomplex of \(\widetilde Y_0\) is not isomorphic to a product of two nontrivial free groups. The downstairs product map \(\Theta\times\Theta\to Y_0\) induces an embedding of \(\pi_1(\Theta\times\Theta)\cong\FF_5\times\FF_5\) as an index-four subgroup of this stabilizer.
\end{proposition}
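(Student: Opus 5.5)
The universal cover \(\widetilde Y_0\) is \(T_\Theta\times T_\Theta\). This is itself a product of two leafless trees, so the identity is its unique maximal product subcomplex, and its full stabilizer in \(\pi_1(Y_0)\) is all of \(\pi_1(Y_0)\). The statement therefore has two parts: \(\pi_1(Y_0)\) contains \(\pi_1(\Theta\times\Theta)\cong\FF_5\times\FF_5\) with index four, and \(\pi_1(Y_0)\) is not isomorphic to \(\FF_m\times\FF_n\) for any \(m,n\ge1\).

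The index-four embedding follows from covering theory. The map \(\frq:\Theta\times\Theta\to Y_0\) is a regular fourfold covering with deck group \(\diag(G)\cong\ZZ/4\ZZ\), so \(\frq_*\) embeds \(\pi_1(\Theta\times\Theta)\) as a normal subgroup of index four. Since \(\Theta\) has \(16\) vertices and \(20\) edges, its rank is \(20-16+1=5\), giving \(\FF_5\times\FF_5\). The inclusion is compatible with the product-base group of the maximal product subcomplex \(\Theta\times\Theta\to Y_0\).

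For the non-product assertion, I would set \(\Gamma=\pi_1(Y_0)\) and use the extension \(1\to\FF_5\times\FF_5\to\Gamma\to\ZZ/4\ZZ\to1\).

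\begin{enumerate}
\item Since \(\diag(G)\) acts on each factor without interchanging them, \(\Gamma\) acts on \(T_\Theta\times T_\Theta\) factor-wise. It acts freely and cocompactly, and the projection to each factor tree is an action of \(\Gamma\) on \(T_\Theta\).
\item Suppose \(\Gamma\cong\FF_m\times\FF_n\). By the Kapovich--Kleiner--Leeb theorem (or Bass--Serre/Castel-type rigidity for lattices in products of trees), a product decomposition with nonabelian factors is preserved by the product structure of the tree. After possibly swapping, \(\FF_m\) then acts on the first tree with finite stabilizers, and since these groups are torsion-free, freely. Both bushy factors are nonabelian, so the cases \(m\) or \(n=1\) (where \(\Gamma\) would contain \(\ZZ\times\FF_n\) with center) must be excluded separately. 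They are, because the finite-index subgroup \(\FF_5\times\FF_5\) has trivial virtual center, so \(m,n\ge2\).
\item A more direct obstruction is to look at the kernel \(K_1\) of the action on the second tree modulo the first factor. In a product \(\FF_m\times\FF_n\) acting factor-wise, the subgroup acting trivially on the second factor tree, up to bounded displacement, is \(\FF_m\). In \(\Gamma\), the subgroup of elements with no second-coordinate translation is \(\pi_1(\Theta)\times1\), since an element mapping to \(k\ne0\) in \(G\) moves both coordinates. The subgroups \(\FF_5\times1\) and \(1\times\FF_5\) then generate a subgroup of index four in \(\Gamma\), whereas in \(\FF_m\times\FF_n\) the two factor-kernels generate everything.
\end{enumerate}

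The main obstacle is the identification in step 3: I must show that any abstract isomorphism \(\Gamma\cong\FF_m\times\FF_n\) sends the two factors to the subgroups of elements acting elliptically, up to finite index, on one factor tree. To establish this, I would try centralizers first. In \(\FF_m\times\FF_n\) with \(m,n\ge2\), the factors are characterized as maximal normal subgroups whose centralizer is nontrivial, and \(C_\Gamma(1\times\FF_5)\) can be computed directly as \(\FF_5\times1\): an element of nonzero \(G\)-degree cannot centralize, since it shifts petals. This would give the product decomposition \(\Gamma=(\FF_5\times1)(1\times\FF_5)\) of index one, contradicting the index four established above.
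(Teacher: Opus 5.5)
Your proof is correct in substance, but its non-product part takes a different route from the paper.

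\textbf{The two routes.} The paper argues homologically. $Y_0$ is aspherical; rational transfer for the free $G$-action and the K\"unneth formula give $b_1(Y_0)=2b_1(\Lambda)=4$; multiplicativity of the Euler characteristic gives $\chi(Y_0)=\chi(\Theta)^2/4=4$, hence $b_2(Y_0)=7$. An isomorphism with $\FF_m\times\FF_n$ would then need $m+n=4$ and $mn=7$, which has no solution.

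You argue structurally, using the fiber-product description $\Gamma=\pi_1(Y_0)=\{(a,b)\in\pi_1(\Lambda)^2:\epsilon(a)=\epsilon(b)\}$, where $\epsilon:\pi_1(\Lambda)\to G$ has kernel $\pi_1(\Theta)$. The kernels $N_1=\FF_5\times1$ and $N_2=1\times\FF_5$ of the two factor actions are each other's centralizers. They are therefore the only candidates for the factors of a product decomposition, yet they generate only an index-four subgroup.

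The paper's computation is shorter and uses only numerical invariants. Yours identifies the obstruction itself and also excludes decompositions into any two groups whose nontrivial normal subgroups have trivial centralizers. The cost is an explicit description of $\Gamma$ and a small lemma about free groups.

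\textbf{What is needed to make it a proof.} Drop the Kapovich--Kleiner--Leeb discussion in your steps 2 and 3; it is unnecessary and not precise as stated. Then prove the two facts you only assert.
\begin{enumerate}
\item If $A,B$ are nonabelian free, every normal subgroup $N$ of $A\times B$ with nontrivial centralizer lies in $A$ or in $B$. The reason is that the projections of $N$ are normal, and a nontrivial normal subgroup of a nonabelian free group has trivial centralizer.
\item You need both $C_\Gamma(N_2)=N_1$ and $C_\Gamma(N_1)=N_2$, not just one. The relevant coordinate of a centralizing element centralizes the nonabelian normal subgroup $\ker\epsilon$ of $\pi_1(\Lambda)\cong\FF_2$, so it is trivial. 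This forces the other coordinate into $\ker\epsilon$. ``Shifts petals'' is only a heuristic.
\end{enumerate}

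With these, suppose $\Gamma\cong A\times B$. Fact 1 puts $N_2$ in one factor, say $A$. Then $B\le C(N_2)=N_1$, and fact 1 applied to $N_1$ gives $N_1=B$. Then $A\le C(B)=C(N_1)=N_2\le A$, so $N_2=A$. Hence $N_1N_2=\Gamma$, contradicting index four. Your center argument correctly disposes of the cases $m=1$ or $n=1$.

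Finally, $\Theta$ has $20$ vertices and $24$ edges, not $16$ and $20$; the rank $5$ is unaffected.
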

\begin{proof}
The unique maximal product subcomplex is the class of the identity map of \(T_{\Theta}\times T_{\Theta}=\widetilde Y_0\). Its stabilizer is \(H=\pi_1(Y_0)\), and the covering \(\Theta\times \Theta\to Y_0\) supplies the index-four product subgroup. Since \(b_1(\Theta)=5\) and \(b_1(\Lambda)=2\), transfer for the finite free action and the K\"unneth formula give
\[b_1(Y_0)=2b_1(\Lambda)=4,\qquad\chi(Y_0)=\frac{\chi(\Theta)^2}{4}=4,\qquad b_2(Y_0)=7.\]
The complex is aspherical. An isomorphism \(H\cong\FF_m\times\FF_n\) would therefore imply \(m+n=4\) and \(mn=7\), which is impossible.
\end{proof}

Thus retaining product subcomplexes corrects the image ambiguities, but does not identify the fundamental groups of their downstairs domains with full stabilizers. This distinction is still necessary when forming a complex of groups.

\appendix
\section{Comparison with the earlier construction}\label{sec:oh22status}

We summarize the corrections to \cite{Oh22}, retaining the two-sided hypothesis and the citation numbering specified in the introduction.

\begin{enumerate}[wide]
\item \textbf{Elevations and maximality (Lemma~2.9 and Definition~2.10).}
The existence assertion of \cite[Lemma~2.9]{Oh22} is valid, but its proof incorrectly claims that taking universal covers of the projected coordinate fibers gives a product that is inclusion-maximal among products with the same projection to \(Y\); see \Cref{ex:nonmaxelevation}. Here standardness requires leafless factors, without this image-maximality condition. \Cref{lem:factorization,prop:maporbits} establish the factorization and maximality correspondences under elevation; in particular, maximal product subcomplexes and core maps upstairs are elevations of specified downstairs maps.

\item \textbf{The base and inclusions (after Lemma~2.9, and Lemma~2.12).}
The assertion that all $p$-lifts of one image form a single deck orbit is false: \Cref{prop:bases} gives maximal product subcomplexes with the same image whose domains have nonisomorphic fundamental groups. Here the base is the map's domain, and a fixed map determines one deck orbit of elevations. The proof of \cite[Lemma~2.12]{Oh22} also passes unjustifiably from image inclusion to compatible inclusion of lifts. Replacing image inclusion by factorization resolves this gap via \Cref{lem:factorization}.

\item \textbf{Intersection cores (Lemmas~2.13--2.14 and Definition~3.5(3)).}
The image-maximal enlargement in Lemma~2.13 need not remain in the original product, so it does not justify Lemma~2.14. \Cref{prop:corefailure} disproves the old core assertion: an intersection of the images of two maximal product subcomplexes contains no standard product subcomplex in the old sense at finite Hausdorff distance from the intersection. This also invalidates the corresponding premise of Definition~3.5(3). With the revised definition, \Cref{lem:geometriccores} supplies the canonical core maps used in our labels.

\item \textbf{The quasi-isometry argument (Lemma~3.1 and Theorems~3.2, 3.4 and~3.7).}
The use of \cite[Lemma~2.7]{Oh22} in \cite[Lemma~3.1]{Oh22} is invalid because the three flats in a tripod times a line intersect pairwise in half-planes. \Cref{lem:tripod} instead uses their triple intersection, following \cite[Lemma~4.1]{BKS08}. With the revised cores, \Cref{lem:general-matching,thm:geometricqi} establish product correspondence and quasi-isometry invariance; \Cref{prop:factorchaindata,thm:geometricisomorphism} give isomorphisms in the sense of \Cref{def:icmorphisms}, together with the factor-wise compatibility. The core counterexample does not contradict these geometric conclusions.

\item \textbf{The reduced complex (Definition~3.8 and Theorem~3.9).}
\Cref{prop:vertexfailure} shows that the image-defined vertex projection fails, while \Cref{prop:bases} shows that the base labels in Definition~3.8(3) are ambiguous. The common-factor diagrams defining \(\cI(Y)\) retain full simplex orbits and their incidences, giving the quotient of \Cref{thm:mapquotient}. Loops and multiple cells are allowed; \Cref{prop:loop} exhibits a loop. Under simplicity, \Cref{thm:simplecomparison} recovers the image-defined reduced complex.

\item \textbf{Assigned groups (Definition~3.13, the subsequent chain conditions, and Theorems~3.14--3.15).}
Product-base groups can differ from full stabilizers (\Cref{prop:stabilizer}); the quotient complex of groups uses full stabilizers. Under simplicity, \Cref{thm:simplecomparison} identifies these groups, while \Cref{prop:factorchaindata} supplies the chain information for semi-morphisms. The simplicity hypotheses in \cite[Theorems~3.14--3.15]{Oh22} must also be included in the corresponding introductory Theorems~C and~F; the map-based quotient theorem does not remove them.
\end{enumerate}

\Cref{cor:simplegraphbraids} verifies simplicity for the Salvetti and two-particle configuration-space models, and their two-dimensional connected cubical subcomplexes, relevant to \cite[Sections~4--5]{Oh22}. The map and image descriptions therefore agree for these models. The application-specific hypotheses and arguments, including the simplest-cactus restriction and the intersection and development corrections in \cite{Oh23}, remain in force.
Identifying an upstairs component with a universal development also requires its simple connectedness: simplicity alone does not imply the development conclusions of \cite[Theorem~4.17 and Corollary~4.18]{Oh22} (Theorem~4.18 and Corollary~4.19 in the integrated arXiv version).

\bibliographystyle{amsalpha}
\bibliography{product_images_references}
\end{document}